\def\Fraisse{Fra\"{\i}ss\' e}
\def\K{{\cal K}}
\def\TV{{\cal TV}}
\def\Rado{{\cal R}}
\def\Words{{\cal W}}
\def\Functions{{\cal F}}
\def\Convex{{\cal C}}
\def\Intervals{{\cal I}}
\def\Bintree{{\cal B}}
\def\Periodic{{\cal S}}
\def\Grammar{{\cal G}}
\def\Convex{{\cal C}}
\def\Z{\mathbb Z}
\def\Poset{\mathbb P}
\DeclareMathSymbol{\Oddel}
   {\mathbin}{symbols}{"6A}
\def\He{{\cal P_\in}}
\def\Model{{\mathfrak {M}}}
\def\Z{{\mathbb Z}}
\def\Polynoms{{\cal O}}
\def\Poset{{\cal P}}
\def\Paths{{\cal P}}
\def\apple{{\heartsuit}}
\def\HE#1,#2,#3{(\,#2\,\Oddel\,#3\,)}
\def\Embed#1#2{\Phi^{#1}_{#2}}
\def\TA{\HE 1,\emptyset,\emptyset}
\def\TB{\HE 2,\emptyset,{\{\TA\}}}
\def\TC{\HE 3,{\{\TA,\TB\}},\emptyset}
\def\TD{\HE 4,{\{\TB\}},{\{\TC\}}}
\def\Sur{\mathbb S}
\newcommand{\subf}[1][]{\ifthenelse{\equal{#1}{}}{\ensuremath\leq}{\ensuremath\leq_{#1}}}
\newcommand{\psubf}[1][]{\ifthenelse{\equal{#1}{}}{\ensuremath <}{\ensuremath <_{#1}}}
\newcommand{\supf}[1][]{\ifthenelse{\equal{#1}{}}{\ensuremath\succeq}{\ensuremath\succeq_{#1}}}
\newcommand{\psupf}[1][]{\ifthenelse{\equal{#1}{}}{\ensuremath\succ}{\ensuremath\succ_{#1}}}
\newcommand{\incomp}[1][]{\ifthenelse{\equal{#1}{}}{\ensuremath\parallel}{\ensuremath\parallel_{#1}}}
\newcommand{\fequiv}[1][]{\ifthenelse{\equal{#1}{}}{\ensuremath\equiv}{\ensuremath\equiv_{#1}}}
\newcommand{\subc}[1][]{\ifthenelse{\equal{#1}{}}{\ensuremath\preccurlyeq}{\ensuremath\preccurlyeq_{#1}}}
\newcommand{\eqclass}[2][]{\ifthenelse{\equal{#1}{}}{\ensuremath[{#2}]}{\ensuremath[{#2}]_{#1}}}
\newtheorem{defn}{Definition}[section]
\newtheorem{fact}{Fact}
\newtheorem{corollary}{Corollary}[section]
\newtheorem{prop}{Proposition}[section]
\newtheorem{remark}{Remark}[section]
\newtheorem{example}{Example}[section]
\newtheorem{thm}{Theorem}[section]
\newtheorem{lem}[thm]{Lemma} 
\DeclareRobustCommand{\qed}{%
  \ifmmode \mathqed
  \else
    \leavevmode\unskip\penalty9999 \hbox{}\nobreak\hfill
    \quad\hbox{$\square$}%
  \fi
}
\begin{document}

\title{Some examples of universal and generic partial orders}

\author{
   \normalsize {\bf Jan Hubi\v cka$^*$}\\
   \normalsize {\bf Jaroslav Ne\v set\v ril}\thanks {The Institute for Theoretical Computer Science (ITI) is supported as project 1M0545 by the Ministry of Education of the Czech Republic.}
\\
   {\small Department of Applied Mathematics}\\
{\small and}\\
{\small Institute of Theoretical Computer sciences (ITI)}\\
   {\small Charles University}\\
   {\small Malostransk\' e n\' am. 25, 11800 Praha, Czech Republic}\\
   {\normalsize 118 00 Praha 1}\\
   {\normalsize Czech Republic}\\
   {\small \{hubicka,nesetril\}@kam.ms.mff.cuni.cz}
}

\date{}
\maketitle
\begin{abstract}
We survey structures endowed with natural partial orderings and prove their
universality.  These partial orders include partial orders on sets of words,
partial orders formed by geometric objects, grammars, polynomials and homomorphism order for various combinatorial objects.
\end{abstract}
\section {Introduction}

For given class $\K$ of countable partial orders we say that class $\K$
contains an {\em embedding-universal} (or simply {\em universal}) structure
$(U,\leq_U)$ if every partial order $(P,\leq_P)\in \K$ can be found as induced
suborder of $(U,\leq_U)$ (or in other words, there exists embedding from
$(P,\leq_P)$ to $(U,\leq_U)$).

A partial order $(P,\leq_P)$ is {\em ultrahomogeneous} (or simply {\em homogeneous}),
if every isomorphism of finite suborders of $(P,\leq_P)$ can be extended to an
automorphism of $(P,\leq_P)$. 

A partial order $(P,\leq_P)$ is {\em generic} if it is both {\em ultrahomogeneous} and {\em universal}.

The generic objects can be obtained from the \Fraisse{} limit \cite{F}. But it
is important that often these generic objects (despite their apparent
complexity and universality) admit a concise presentation. Thus for example the
Rado graph (i.e. countable universal and homogeneous undirected graph) can be
represented in various ways by elementary properties of sets or finite
sequences, number theory or even probability. Similar concise representations
were found for some other generic objects such as all undirected
ultrahomogeneous graphs \cite{HN-Posets} or the Urysohn space
\cite{metric}.  The study of generic partial order also motivated
this paper and we consider representation of the generic partial order in Section 3.

The notion of finite presentation we interpret here broadly as a succinct
representation of an infinite set, succint in the sense that elements are
finite models with relations induced by ``compatible mappings'' (such as
homomorphisms) between the corresponding models. This intuitive definition
suffices as we are interested in the (positive) examples of such
representations.

A finite presentation of the generic partial order is given in \cite{HN-Posets}
---however this construction is quite complicated. This paper gives a more
streamlined construction and relates it to Conway surreal numbers (see Section
3). 

In Section 2 we present several simple constructions which yield (countably) universal
partial orders. Such objects are interesting on their own and were intensively studied in
the context of universal algebra and categories. For example, it is a classical
result of Pultr and Trnkov\'a \cite{Pultr} that finite graphs with the
homomorphism order are countably universal quasiorder. Extending and completing
\cite{HN-Posets} we give here several constructions which yields to universal
partial orders.  These constructions include: 
\begin{enumerate}
\item The order $(\Words,\leq_\Words)$ on sets of words in the alphabet $\{0,1\}$.
\item The dominance order on the binary tree $(\Bintree,\leq_\Bintree)$.
\item The inclusion order of finite sets of finite intervals $(\Intervals,\leq_\Intervals)$.
\item The inclusion order of convex hulls of finite sets of points in the plane $(\Convex,\leq_\Convex)$.
\item The order of piecewise linear functions on rationals $(\Functions,\leq_\Functions)$.
\item The inclusion order of periodic sets $(\Periodic,\subseteq)$.
\item The order of sets of truncated vectors (generalization of orders of vectors
of finite dimension) $(\TV,\leq_\TV)$.
\item The orders implied by grammars on words $(\Grammar,\leq_\Grammar)$.
\item The homomorphism order of oriented paths $(\Paths,\leq_\Paths)$.
\end{enumerate}

Note that with universal partial orders we have more freedom (than with the generic
pratial order) and as a consequence we give a perhaps surprising variety of finite
presentations.

We start with a simple representation by means of finite sets of binary words. This
representation seems to capture properties of such a universal partial order very
well and it will serve as our ``master'' example.  In most other cases we prove the
universality of some particular partial order by finding a mapping from the
words representation into the structure in question.  This technique will be
shown in several applications in the next sections. While some of these
structures are known be universal, see e.g. \cite{hedrlin,nesu,HN-paths}, in several
cases we can prove the universality in a new, we believe, much easier way.
The embeddings of structures are presented as follows (ones denoted by dotted
lines are not presented in this paper, but references are given).

\medskip
~

\centerline{\includegraphics{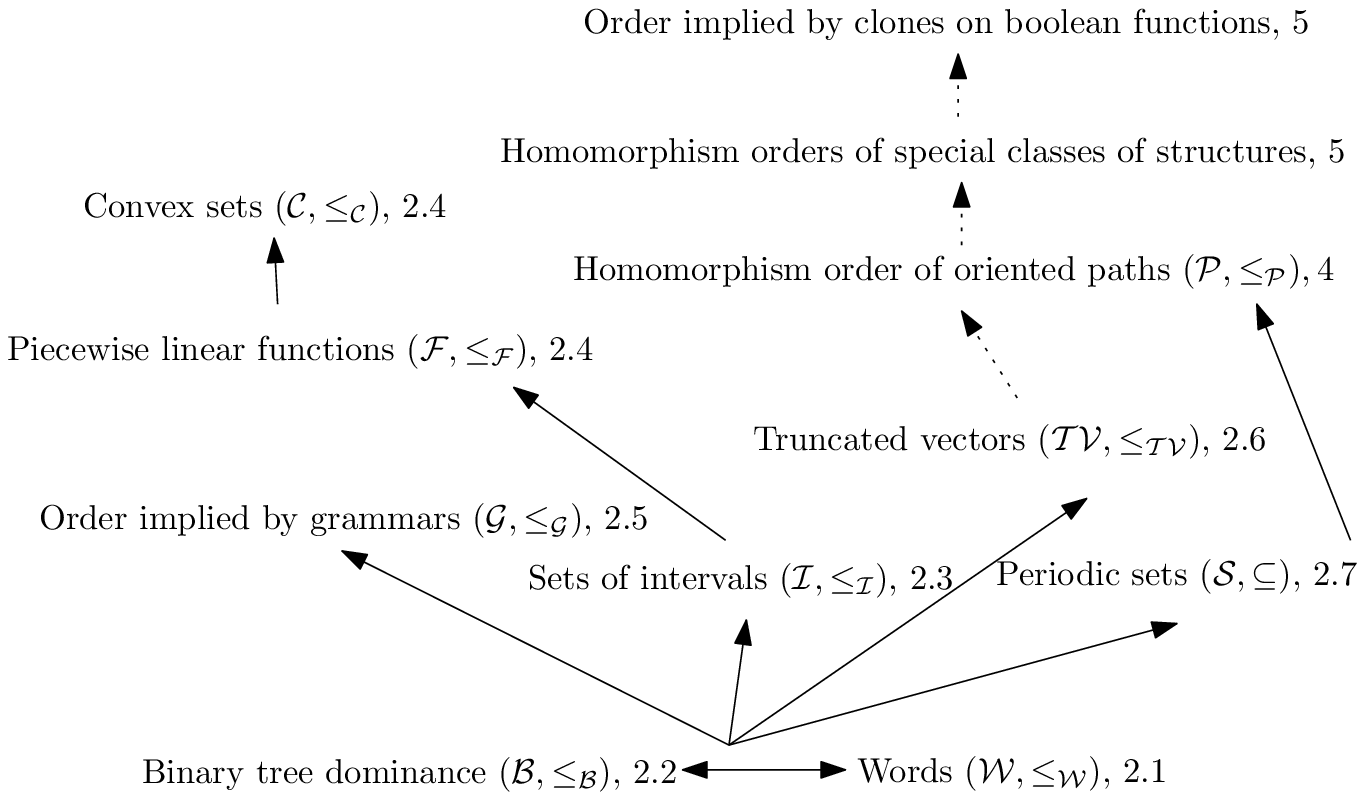}}

\medskip
~

At this point we would like to mention that the (countable) universality
is an essentially finite problem as it can be formulated as follows:
By an {\em on-line representation} of a class $\K$ of partial orders in a partial
order $(P,\leq_P)$, we mean that one can construct an embedding $\varphi:R\to P$
of any partial order $(R,\leq_R)$ in class $\K$ under the restriction that
the elements of $R$ are revealed one by one.  The on-line representation of a
class of partial orders can be considered as a game between two players $A$ and
$B$ (usually Alice and Bob).  Player $B$ chooses a partial order $(P,\leq_P)$ in the class $\K$, and
reveals the elements of $P$ one by one to player $A$ ($B$ is a bad guy).
Whenever an element of $x$ of $P$ is revealed to $A$, the relations among $x$
and previously revealed elements are also revealed.  Player $A$ is required to
assign a vertex $\varphi(x)$---before the next element is revealed---such that
$\varphi$ is an embedding of the suborder induced by  $(P,\leq_P)$ on the already revealed
elements of $(R,\leq_R)$.  Player $A$ wins the game if he succeeds in
constructing an embedding $\varphi$.  The class $\K$ of partial orders is
on-line representable in the partial order $(P,\leq_P)$ if player $A$ has a winning
strategy. 

On-line representation (describing winning strategy of $A$) is a convenient way
of showing the universality of given partial order. In particular it transforms
problem of embedding countable structures into a finite problem of extending the
existing partial embedding by next element.

We say that a partial order $(P,\leq_P)$ has the {\em extension property} if the
following holds: for any finite mutually disjoint subsets $L,G,U\subseteq P$ there exist a vertex
$v\in P$ such that $v'<_P v$ for each $v'\in L$, $v<_P v'$ for each $v'\in G$
and neither $v\leq_P v'$ nor $v'\leq_P v$ for each $v'\in U$.  The extension
property is a stronger form of on-line representability of any partial order.
Using a zig-zag argument it is easy to show that a partial order having the extension
property is homogeneous (and thus generic).

In Section 3 we describe a finite representation of the generic partial order related
to Conway's surreal numbers.  Somewhat surprisingly, this is the only known finite presentation of the generic partial order \cite{HN-Posets}. The
constructions of universal partial orders are easier, but they are often not
generic. We discuss reasons why other structures fail to be ultrahomogeneous.  In
particular we will look for gaps in the partial order.  Recall that the {\em
gap} in a partial order $(P,\leq_P)$ is a pair of elements $v, v'\in P$ such that
$v<_\Bintree v'$.  A partial order having no gaps is called {\em dense}. We will
show examples of universal partial orders both with gaps  and without gaps but
still failing to be generic.

\section{Examples of Universal Partial Orders}

To prove the universality of a given partially ordered set 
is often a difficult task \cite{hedrlin,Pultr,HN-trees,nesu}.
However, the individual proofs, even if developed independently, use similar tools.
We demonstrate this by isolating a ``master'' construction (in Section \ref{wordsection}). This construction is then embedded into partial orders defined
by other structures (as listed above). We shall see that the representation of
this particular order is flexible enough to simplify further embeddings.

\subsection{Word representation}
\label{wordsection}

The set of all words over the alphabet $\Sigma=\{0,1\}$ is denoted by $\{0,1\}^*$.
For words $W,W'$ we write $W\leq_w W'$ if and only if $W'$ is an initial segment (left
factor) of $W$. Thus we have, for example, $\{011000\}\leq_w \{011\}$ and
$\{010111\}\nleq_w \{011\}$.

\begin{defn}
\label{Wordsdef}

Denote by $\Words$ the class of all finite subsets $A$ of $\{0,1\}^*$ such that no distinct words $W,W'$ in $A$ satisfy $W\leq_w W'$. For
$A,B\in \Words$ we put $A\leq_\Words B$ when for each $W\in A$ there exists
$W'\in B$ such that $W\leq_w W'$.
\end{defn}

Obviously $(\Words,\leq_\Words)$ is a partial order (antisymmetry follows from the fact that $A$ is an antichain in the order $\leq_w$).

\begin{defn}
For a set $A$ of finite words denote by $\min A$ the set of all minimal words
in $A$ (i.e. all $W\in A$ such that there is no $W'\in A$ satisfying $W'<_wW$).
\end{defn}

Now we show that there is an on-line embedding of any finite partial order to $(\Words,\leq_\Words)$.
Let $[n]$ be the set $\{1,2,\ldots, n\}$.  The partial orders will be restricted to those whose vertex sets are sets $[n]$ (for some $n>1$) and
the vertices will always be embedded in the natural order.  Given a partial order
$([n],\leq_P)$ let $([i],\leq_{P_i})$ denote the partial order induced by
$([n],\leq_P)$ on the set of vertices $[i]$. 

Our main construction is the function $\Psi$ mapping partial orders $([n],\leq_P)$ to elements of $(\Words,\leq_\Words)$ defined as follows:
\begin{defn}
\label{algu}
~

Let $L([n],\leq_P)$ be the union of all $\Psi([m],\leq_{P_m})$, $m<n$, $m\leq_P n$.

Let $U([n],\leq_P)$ be the set
of all words $W$ such that $W$ has length $n$, the last letter is $0$ and
for each $m<n,n\leq_P m$ there is a $W'\in \Psi([m],\leq_{P_m})$ such that $W$ is an initial
segment of $W'$.

Finally, let  $\Psi([n],\leq_P)$ be $\min (L([n],\leq_P) \cup U([n],\leq_P))$.  

In particular, $L([1],\leq_P)=\emptyset, U([1],\leq_P)=\{0\},\Psi ([1],\leq_P)=\{0\}$.
\end{defn}

The main result of this section is the following:

\begin{thm}
\label{Pembed}
Given a partial order $([n],\leq_P)$ we have:
\begin{enumerate}
\item For every $i,j\in[n]$, $$i\leq_P j \hbox{ if and only if } \Psi([i],\leq_{P_i})\leq_\Words \Psi([j],\leq_{P_j})$$ and $$\Psi([i],\leq_{P_i}) = \Psi([j],\leq_{P_j}) \hbox{ if and only if } i=j.$$ (This says that the mapping $\Phi(i) = \Psi([i],\leq_{P_i})$ is an embedding of $([n],\leq_P)$ into $(\Words,\leq_\Words)$),
\item for every $S\subseteq [n]$ there is a word $W$ of length $n$ such that for each $k\leq n$, $\{W\}\leq_\Words\Psi([k],\leq_{P_k})$ if and only if either $k\in S$ or there is a $k'\in S$ such that $k'\leq_P k$.
\end{enumerate}
\end{thm}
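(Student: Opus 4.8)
The plan is to prove statements (1) and (2) simultaneously by induction on $n$, and the crucial organizational point is that inside the inductive step I would establish (2) \emph{before} (1): the converse half of (1) turns out to be a one-line consequence of (2) applied to singletons. The base case $n=1$ is immediate from $\Psi([1],\leq_P)=\{0\}$: for (1) there is nothing to check, while for (2) the word $0$ witnesses $S=\{1\}$ and the word $1$ witnesses $S=\emptyset$. Throughout I would use two elementary bookkeeping facts, each by a side induction. First, every word of $\Psi([k],\leq_{P_k})$ has length at most $k$, since the words contributed by $U$ have length exactly $k$ and those inherited through $L$ are shorter. Second, by the choice of $\min$, the antichain $\Psi([n],\leq_P)$ refines-generates $L([n],\leq_P)\cup U([n],\leq_P)$: every word of the latter has a prefix in $\Psi([n],\leq_P)$, and conversely $\Psi([n],\leq_P)\subseteq L([n],\leq_P)\cup U([n],\leq_P)$. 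The length fact yields the \emph{stability principle}: if a word $W_0$ has length at least $k$, then appending further letters to $W_0$ does not change whether $\{W_0\}\leq_\Words\Psi([k],\leq_{P_k})$ holds, because the relevant prefixes all have length at most $k$.

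For the inductive step I would first prove (2) for the order on $[n]$, writing $\uparrow S=\{k:\exists k'\in S,\ k'\leq_P k\}$ and splitting on whether the new index $n$ lies in $\uparrow S$. In both cases I apply the inductive form of (2) to $S_0=\{k\leq n-1:\exists k'\in S,\ k'\leq_P k\}$, an up-set of $([n-1],\leq_{P_{n-1}})$, to obtain a word $W_0$ of length $n-1$ whose set of superiors inside $[n-1]$ is exactly $S_0$. If $n\notin\uparrow S$, I set $W=W_0 1$; by the stability principle the superiors of $W$ among $[n-1]$ are still $S_0$, while $\{W\}\leq_\Words\Psi([n],\leq_P)$ fails, since $W$ ends in $1$ (hence lies in no $U([n],\leq_P)$, whose words have length $n$) and extends no word of $L([n],\leq_P)$ (such a word would force some $m\leq_P n$ into $S_0$, contradicting $n\notin\uparrow S$). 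If $n\in\uparrow S$, I set $W=W_0 0$ and check $W\in U([n],\leq_P)$: it has length $n$, ends in $0$, and for each earlier superior $m$ of $n$ we have $m\in S_0$, hence $\{W\}\leq_\Words\Psi([m],\leq_{P_m})$; the $\min$ property then gives $\{W\}\leq_\Words\Psi([n],\leq_P)$, placing $n$ among the superiors. In either case the superior set of $W$ is exactly $\uparrow S$, which is (2).

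With (2) available at level $n$ I would prove (1) for all pairs involving $n$. The forward implications are read off the construction: if $i\leq_P n$ then $\Psi([i],\leq_{P_i})\subseteq L([n],\leq_P)$, so each of its words has a prefix in $\Psi([n],\leq_P)$, giving $\Psi([i],\leq_{P_i})\leq_\Words\Psi([n],\leq_P)$; if $n\leq_P i$ then every word of $\Psi([n],\leq_P)$ lies in $L([n],\leq_P)\cup U([n],\leq_P)$ and in either case extends a word of $\Psi([i],\leq_{P_i})$ (through the inductive (1) for the $L$-words, and through the defining condition of $U$ with $m=i$ for the $U$-words), giving $\Psi([n],\leq_P)\leq_\Words\Psi([i],\leq_{P_i})$. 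For the converses I use (2): if $\Psi([i],\leq_{P_i})\leq_\Words\Psi([n],\leq_P)$, take the word $W$ witnessing $S=\{i\}$; then $\{W\}\leq_\Words\Psi([i],\leq_{P_i})\leq_\Words\Psi([n],\leq_P)$, so by (2) we get $n\in\uparrow\{i\}$, i.e.\ $i\leq_P n$; symmetrically $S=\{n\}$ gives $\Psi([n],\leq_P)\leq_\Words\Psi([i],\leq_{P_i})\Rightarrow n\leq_P i$. Injectivity then follows from antisymmetry of $\leq_P$, and all pairs $i,j<n$ are covered directly by the inductive hypothesis.

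I expect the main obstacle to be (2) rather than (1): making the two sub-cases of the witness construction interact correctly with the definitions of $L$, $U$ and $\min$, and in particular exploiting that the fresh words of $U([n],\leq_P)$ have length exactly $n$ and terminate in $0$. This is precisely what lets a single appended letter toggle membership of $n$ in the superior set while the stability principle keeps the lower levels untouched. Once (2) is correctly set up, (1) is comparatively routine, its only real content being the reduction of the two converse implications to (2) via singleton test words.
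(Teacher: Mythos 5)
Your proposal is correct and follows essentially the same route as the paper's own proof: induction on $n$, with statement (2) established first in the inductive step by appending a single letter to the length-$(n-1)$ witness word supplied by the induction hypothesis ($0$ to place $n$ in the superior set via membership in $U([n],\leq_P)$, $1$ to keep it out), and the converse implications of statement (1) then deduced from (2) applied to singleton sets together with transitivity of $\leq_\Words$. If anything, your organization is slightly tighter in two spots where the paper is loose---you decide the appended digit from $n\in\uparrow S$ alone instead of sub-casing on whether $\{W\}\leq_\Words\Psi([n],\leq_P)$ (where the paper's claim that either $W0$ or $W1$ works is not quite accurate, since $W0$ can accidentally land in $U([n],\leq_P)$), and you run both converse directions of (1) through one uniform singleton-witness argument rather than the paper's separate detour through $L([n],\leq_P)$---but the underlying mechanism is identical.
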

The on-line embedding $\Phi$ is illustrated by the following example:
  \begin{figure}[t]
    \begin{center}
      {\def\IPEfile{poset.ipe}\begingroup
  \catcode`\%=9\catcode`\!=0\catcode`\-=11\input{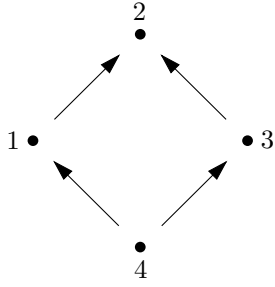}}
    \end{center}
    \caption{The partial order $([4],\leq_P)$.}
    \label{poset}
  \end{figure}
\begin{example}
The partial order $([4],\leq_P)$ depicted in Figure~\ref{poset} has the following values of $\Psi([k],\leq_{P_k}), k=1,2,3,4$:

$$
\begin{array}{lll}
L([1],\leq_{P_1})=\emptyset,& U([1],\leq_{P_1})=\{0\},& \Psi([1],\leq_{P_1})=\{0\},\\
L([2],\leq_{P_2})=\{0\},& U([2],\leq_{P_2})=\{00,10\},& \Psi([2],\leq_{P_2})=\{0,10\},\\
L([3],\leq_{P_3})=\emptyset,& U([3],\leq_{P_3})=\{000,100\},& \Psi([3],\leq_{P_3})=\{000,100\},\\
L([4],\leq_{P_4})=\emptyset,& U([4],\leq_{P_4})=\{0000\},& \Psi([4],\leq_{P_4})=\{0000\}.\\
\end{array}
$$
\end{example}
\begin{proof}[Proof (of Theorem \ref{Pembed})]
We proceed by induction on $n$.

The theorem obviously holds for $n=1$. 

Now assume that the theorem holds for every partial order $([i],\leq_{P_i})$, $i=1,\ldots, n-1$.

We first show that $2.$ holds for $([n],\leq_P)$.  Fix $S\subseteq\{1,2,\ldots, n\}$.
Without loss of generality assume that for each $m\leq n$ such that there is an
$m'\in S$ with $m'\leq_P m$, we also have $m\in S$ (i.e. $S$ is closed upwards).  By the induction hypothesis, there is a word $W$ of length $n-1$ such that for each $n'<n$, $\{W\}\leq_\Words
\Psi([n'],\leq_{P_{n'}})$ if and only if $n'\in S$. Given the word $W$ we can construct a word $W'$ of length $n$ such that $\{W'\}\leq_\Words \Psi([n'],\leq_{P_{n'}})$ if and only if $n'\in S$. To see this, consider the following cases: 
\begin{enumerate}
\item $n\in S$
\begin{enumerate}
\item $\{W\}\leq_\Words \Psi([n],\leq_P)$. Put $W'=W0$. Since $\{W'\}\leq_\Words\{W\}$, $W'$ obviously has the property.

\item $\{W\}\nleq_\Words \Psi([n],\leq_P)$.  In this case we have $m\in S$ for each $m<n,n\leq_P m$, and thus $\{W\}\leq_\Words \Psi([m],\leq_{P_m})$.  By the definition of $\leq_\Words$, for each such $m$ we have $W''\in \Psi([m],\leq_{P_m})$ such that $W''$ is an initial segment of $W$. This implies that $W0$ is in $U([n],\leq_P)$ and thus $\{W\}\leq_{\Words}\Psi([n],\leq_P)$, a contradiction.
\end{enumerate}
\item $n\notin S$
\begin{enumerate}
\item $\{W\}\nleq_\Words \Psi([n],\leq_P)$. In this case we can put either $W'=W0$ or $W'=W1$.
\item $\{W\}\leq_\Words \Psi([n],\leq_P)$.  We have $\{W\}\nleq_{\Words} L([n],\leq_P)$---otherwise we
would have $\{W\}\leq_{\Words} \Psi([m],\leq_{P_m})\leq_{\Words} \Psi([n],\leq_P)$ for some $m<n$ and thus $n\in
S$. Since $U([n],\leq_P)$ contains words of length $n$ whose last digit is 0 putting $W'=W1$ gives $\{W'\}\nleq_\Words U([n],\leq_P)$ and thus also $\{W'\}\nleq_\Words \Psi([m],\leq_{P_m})$.
\end{enumerate}
\end{enumerate}

\bigskip
\noindent
This finishes the proof of property $2.$

Now we prove $1.$  We only need to verify that for $m=1,2,\ldots, n-1$ we have $\Psi([n],\leq_P${}$)\leq_\Words \Psi([m],\leq_{P_m})$ if and only if $n\leq_P m$ and $\Psi([m],\leq_{P_m})\leq_\Words \Psi([n],\leq_P)$ if and only if $m\leq_P n$.  The rest follows by induction. Fix $m$ and consider the following cases:

\begin{enumerate}
\item
{\em $m\leq_P n$ implies $\Psi([m],\leq_{P_m})\leq_\Words \Psi([n],\leq_P)$}: This follows easily from the fact that every word in $\Psi([m],\leq_{P_m})$ is in $L([n],\leq_P)$ and the initial segment of each word in $L([n],\leq_P)$ is in $\Psi([n],\leq_P)$).

\item
{\em $n\leq_P m$ implies $\Psi([n],\leq_P)\leq_\Words \Psi([m],\leq_{P_m})$}: $U([n],\leq_P)$ is a maximal set of words of length $n$ with last digit $0$ such that $U([n],\leq_P)\leq_\Words \Psi([m'],\leq_{P_{m'}})$ for each $m'<n,n\leq_P m'$, in particular for $m'=m$. It suffices to show that $L([n],\leq_P)\leq_\Words \Psi([m],\leq_{P_m})$. For $W\in L([n],\leq_P)$, we have an $m''$, $m''\leq_P n\leq_P m$, such that $W\in \Psi([m''],\leq_{P_{m''}})$.  From the induction hypothesis $ \Psi([m''],\leq_{P_{m''}})\leq_\Words \Psi([m],\leq_{P_m})$---in particular the initial segment of $W$ is in $\Psi([m],\leq_{P_m})$.

\item
{\em $\Psi([m],\leq_{P_m})\leq_\Words \Psi([n],\leq_P)$ implies $m\leq_P n$}:
Since $U([n],\leq_P)$ contains words longer than any word of $m$, we have $\Psi([m],\leq_{P_m})\leq_\Words L([n],\leq_P)$.  By $2.$ for $S=\{m\}$ there is a word $W$ such that $\{W\}\leq_\Words  \Psi([m'],\leq_{P_{m'}})$ if and only if $m\leq_P m'$.  Since $\{W\}\leq_\Words L([n],\leq_P)$, we have an $m'$ such that $m\leq_P m'\leq_P n$.

\item
{\em $\Psi([n],\leq_P)\leq_\Words \Psi([m],\leq_{P_m})$ implies $n\leq_P m$}: We have $\Psi([n],\leq_P)\leq_\Words \Psi([m],\leq_{P_m})$.   By $2.$ for $S=\{n\}$ there is a word $W$ such that $\{W\}\leq_\Words  \Psi([m'],\leq_{P_{m'}})$ if and only if $n\leq_P m'$. Since $\{W\}\leq_\Words \Psi([m],\leq_{P_m})$ we also have $n\leq_P m$.
\end{enumerate}

\end{proof}
\begin{corollary}
The partial order $(\Words,\leq_\Words)$ is universal.
\end{corollary}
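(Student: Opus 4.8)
The plan is to upgrade the finite universality already supplied by Theorem~\ref{Pembed} to full (countable) universality, exploiting the fact that the recursion defining $\Psi$ is \emph{on-line}: the value $\Psi([i],\leq_{P_i})$ depends only on the suborder carried by the vertices $\{1,\dots,i\}$, and is never revised when further vertices are added. Since $(\Words,\leq_\Words)$ is itself a countable partial order (the finite antichains in $\{0,1\}^*$ form a countable set), it suffices to exhibit, for every countable partial order, an embedding into it.

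First I would reduce to a convenient normal form. Every countable partial order is isomorphic to one whose vertex set is an initial segment of $\N$; enumerating the vertices turns the order into $([n],\leq_P)$ for some finite $n$, or into $(\N,\leq_P)$. In the finite case Theorem~\ref{Pembed}.1 directly asserts that $\Phi(i)=\Psi([i],\leq_{P_i})$ is an embedding into $(\Words,\leq_\Words)$, so nothing remains to be done. It therefore suffices to treat a countably infinite order $(\N,\leq_P)$.

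For such an order I would define $\Phi(i)=\Psi([i],\leq_{P_i})$ for every $i\in\N$. This is well defined: inspecting Definition~\ref{algu}, the sets $L([i],\leq_{P_i})$ and $U([i],\leq_{P_i})$ refer only to the values $\Psi([m],\leq_{P_m})$ with $m<i$, all of which lie inside $[i]$. The crucial consistency remark is that for $i\le n$ the set $\Psi([i],\leq_{P_i})$ computed on its own coincides with the one computed inside $([n],\leq_{P_n})$, because $\leq_{P_i}$ is simply the restriction of $\leq_{P_n}$ to $[i]$. Hence Theorem~\ref{Pembed}.1, applied to the finite initial segment $([n],\leq_{P_n})$, yields for all $i,j\le n$ that $i\leq_P j$ if and only if $\Phi(i)\leq_\Words\Phi(j)$, and that $\Phi(i)=\Phi(j)$ if and only if $i=j$.

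Finally, given arbitrary $i,j\in\N$, I would take $n=\max(i,j)$ and apply the equivalence above, which shows that $\Phi$ is injective and both preserves and reflects the relation $\leq_P$. Thus $\Phi$ embeds $(\N,\leq_P)$ into the countable order $(\Words,\leq_\Words)$, establishing universality. The only point requiring care is the consistency of $\Psi$ across different ambient sizes $n$; but since the recursion in Definition~\ref{algu} looks strictly backward, this is immediate. I therefore do not expect a genuine obstacle here: the corollary is essentially the passage to the limit of the on-line construction encoded by $\Psi$.
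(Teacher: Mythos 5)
Your proposal is correct and is exactly the argument the paper intends: the corollary is stated as an immediate consequence of Theorem~\ref{Pembed}, whose on-line character (each $\Psi([i],\leq_{P_i})$ depends only on the initial segment $[i]$ and is never revised) is precisely what lets the finite embedding pass to the limit for a countably infinite order $(\N,\leq_P)$. The consistency observation you single out is the right (and only) point needing care, and it holds for the reason you give.
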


Note that $\Words$ fails to be a ultrahomogeneous partial order. For example the empty set is the minimal element. $\Words$ is also not dense as shown by the following example: $$A=\{0\}, B=\{00,01\}.$$

This is not unique gap---we shall characterize all gaps in $(\Words,\leq_\Words)$ after
reformulating it in a more combinatorial setting in Section \ref{dominance}.

\subsection{Dominance in the countable binary tree}
\label{dominance}
As is well known, the Hasse diagram of the partial order $(\{0,1\}^*, \leq_w)$
forms a complete binary tree $T_u$ of infinite depth. Let $r$ be its root
vertex (corresponding to the empty word).  Using $T_u$ we can reformulate our
universal partial order as:

\begin{defn}
The vertices of $(\Bintree,\leq_\Bintree)$ are finite sets $S$ of vertices of $T_u$ such that there is no
vertex $v\in S$ on any path from $r$ to $v'\in S$ except for $v'$. (Thus $S$ is a finite antichain in the order of the tree $T$.)

We say that $S'\leq_\Bintree S$ if and only if for each path from $r$ to $v\in S$ there is a vertex
$v'\in S'$.
\end{defn}

\begin{corollary}
The partially ordered set $(\Bintree,\leq_\Bintree)$ is universal.
\end{corollary}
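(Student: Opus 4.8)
The plan is to recognise $(\Bintree,\leq_\Bintree)$ as nothing more than a dual reformulation of the master order $(\Words,\leq_\Words)$, whose universality was established in the previous corollary, and then to invoke the fact that universality is a \emph{self-dual} property.

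First I would identify the two underlying sets. The vertices of $T_u$ are exactly the words of $\{0,1\}^*$, and the tree order (the ancestor relation, i.e.\ ``$v'$ lies on the path from $r$ to $v$'') is precisely the prefix relation, which is the reverse of $\leq_w$: indeed $v'$ is on the path from $r$ to $v$ if and only if $v'$ is an initial segment of $v$, i.e.\ $v\leq_w v'$. Consequently a finite set of tree vertices is an antichain in the tree exactly when it is an antichain in $\leq_w$, so the vertex sets of $\Bintree$ and of $\Words$ literally coincide.

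Next I would unwind the two order definitions on these common vertices and check that they are mutually reverse. By definition $S'\leq_\Bintree S$ means that every path from $r$ to some $v\in S$ meets $S'$; by the translation above this says that for each $v\in S$ there is $v'\in S'$ with $v\leq_w v'$. But this is verbatim the defining condition of $S\leq_\Words S'$. Hence for all admissible $S,S'$,
$$S'\leq_\Bintree S \iff S\leq_\Words S',$$
so the identity map on the common vertex set is an anti-isomorphism and $(\Bintree,\leq_\Bintree)=(\Words,\leq_\Words)^{\mathrm{op}}$.

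Finally I would conclude. The class of countable partial orders is closed under passing to the opposite order, and universality is therefore preserved under duality: given any partial order $(P,\leq_P)$, apply the universality of $(\Words,\leq_\Words)$ to the dual order $(P,\geq_P)$ to obtain an embedding $\varphi$; the very same map $\varphi$ is then an embedding of $(P,\leq_P)$ into $(\Words,\leq_\Words)^{\mathrm{op}}=(\Bintree,\leq_\Bintree)$, which yields universality of $(\Bintree,\leq_\Bintree)$. I expect the only genuinely delicate point to be bookkeeping the order-reversal: since $\Bintree$ is the dual rather than an isomorphic copy of $\Words$, the argument must rest on the (easy but essential) observation that universality --- unlike, say, homogeneity --- is self-dual, rather than on a direct isomorphism. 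An alternative, should one prefer to avoid the duality remark, would be to re-run the on-line construction of Theorem~\ref{Pembed} verbatim in the tree language; but the duality observation is shorter and makes the relationship between the two orders transparent.
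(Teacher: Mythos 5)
Your proof is correct, and it is in fact more careful than the paper's own one-line proof. Both arguments proceed by identifying $(\Bintree,\leq_\Bintree)$ with the word order, but the paper simply asserts that $(\Bintree,\leq_\Bintree)$ is ``just a reformulation of $(\Words,\leq_\Words)$ and thus both partial orders are isomorphic.'' Your unwinding of the definitions shows that the identification actually \emph{reverses} the order: the identity map is an anti-isomorphism, i.e.\ $(\Bintree,\leq_\Bintree)=(\Words,\leq_\Words)^{\mathrm{op}}$. This distinction is not pedantry, because $(\Words,\leq_\Words)$ is not self-dual: its maximum element $\{\epsilon\}$ (the singleton of the empty word) is covered by $\{0,1\}$, giving a gap at the top, while its minimum $\emptyset$ has no cover at all, since any nonempty antichain $A$ lies strictly above the nonempty antichain obtained by appending a letter to one of its words; hence $\Words$ and $\Words^{\mathrm{op}}$ are genuinely non-isomorphic. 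This is consistent with the paper's own gap analysis, where the gap in $\Words$ is $\{00,01\}<_\Words\{0\}$ (two sons below the parent) but the gap characterization in $\Bintree$ puts the parent-set \emph{below} the two-sons set. So your extra step --- that universality passes to the opposite order because the class of countable partial orders is closed under order reversal --- is exactly the observation needed to make the corollary's proof complete; the paper reaches the right conclusion by a literally incorrect assertion, and your version repairs it. One quibble: your parenthetical contrast with homogeneity is off, since homogeneity is also a self-dual property (an isomorphism between finite suborders of $P^{\mathrm{op}}$ is the same map as an isomorphism between finite suborders of $P$, and likewise for automorphisms); but this aside plays no role in your argument.
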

\begin{proof}
$(\Bintree,\leq_\Bintree)$ is just a reformulation of $(\Words,\leq_\Words)$ and thus both partial orders are isomorphic.
\end{proof}

  \begin{figure}[t]
    \begin{center}
      \includegraphics[width=10cm]{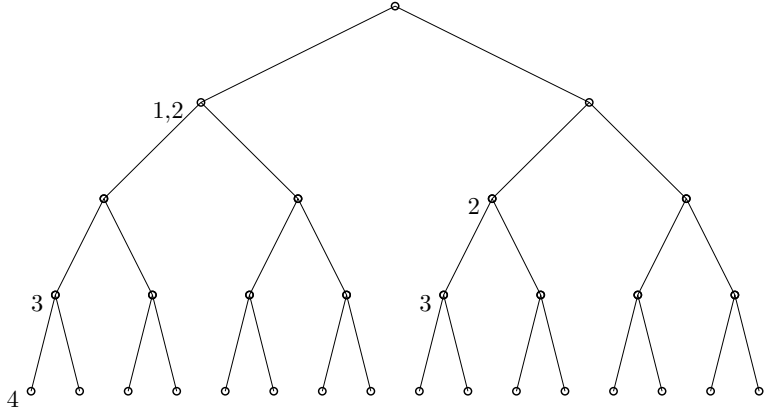}
    \end{center}
    \caption{Tree representation of $([4],\leq_P)$ (Figure~\ref{poset}).}
    \label{strom-plny}
  \end{figure}
Figure~\ref{strom-plny} shows a portion of the tree $T$ representing
the same partial order as in Figure~\ref{poset}.

The partial order $(\Bintree,\leq_\Bintree)$ offers perhaps a better intuitive
understanding as to how the universal partial order is built from the very simple partial
order $(\{0,1\}^*, \leq_w)$ by using sets of elements instead of single
elements.  Understanding this makes it easy to find an embedding
of $(\Words,\leq_\Words)$ (or equivalently $(\Bintree,\leq_\Bintree)$) into a new structure by first looking for a way to represent the partial order $(\{0,1\}^*, \leq_w)$ within the new structure and then a way to represent subsets of $\{0,1\}^*$. This idea will be applied several times in the following sections.

Now we characterize gaps.

\begin{prop}
$S<S'$ is a gap in $(\Bintree,\leq_\Bintree)$ if and only if there exists an $s'\in S'$ such that
\begin{enumerate}
\item there is a vertex $s\in S$ such that both sons $s_0,s_1$ of $s$ in the tree $T$ are in $S'$,
\item $S\setminus\{s_0,s_1\}=S'\setminus\{s\}$.
\end{enumerate}
\end{prop}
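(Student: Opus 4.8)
The plan is to characterize exactly when $S < S'$ is a \emph{cover} (gap) in $(\Bintree,\leq_\Bintree)$, meaning $S <_\Bintree S'$ with no $T$ strictly between them. First I would unwind the order: recall that $S' \leq_\Bintree S$ means every root-to-$v$ path for $v \in S$ meets $S'$, i.e. every element of $S$ lies \emph{at or below} some element of $S'$ in the tree order. So $S <_\Bintree S'$ says $S'$ is obtained by ``pushing elements of $S$ upward (toward the root)'' in a way that strictly decreases the antichain in $\leq_\Bintree$. The intuition I would make precise is that the only way to get a gap is the smallest possible local move: merging two sibling leaves $s_0, s_1$ into their common parent $s$, leaving the rest of the antichain untouched. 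This is precisely the condition in the statement once one reads it correctly (with $S'$ the \emph{smaller} element containing the parent $s$, and $S$ the larger element containing the two sons $s_0,s_1$).

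For the direction ``the stated condition implies a gap,'' I would argue directly. Given $S' = (S \setminus \{s_0,s_1\}) \cup \{s\}$ where $s_0,s_1$ are both sons of $s$, it is immediate that $S' <_\Bintree S$: every path hitting $s_0$ or $s_1$ already passes through $s$, and the remaining elements are shared. To see there is nothing strictly between, suppose $S' <_\Bintree R <_\Bintree S$. Comparing $R$ with both $S$ and $S'$ forces each element of $R$ to lie in the ``corridor'' between the two antichains; since $S$ and $S'$ differ only in replacing the single vertex $s$ by its two children, the only antichains $R$ with $S' \leq_\Bintree R \leq_\Bintree S$ are $S$ and $S'$ themselves. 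I would make this explicit by a short case analysis on whether $s \in R$ or $\{s_0,s_1\}\subseteq R$, using the fact that $R$ must be an antichain and must relate correctly to the unchanged vertices.

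The harder and more interesting direction is the converse: every gap has this form. Here I would take an arbitrary gap $S <_\Bintree S'$ and show a minimal move must be available unless the pair already has the stated shape. The key step is: if $S <_\Bintree S'$ and the difference is anything \emph{more} than merging one sibling pair, I can insert an intermediate antichain $R$, contradicting minimality. Concretely, pick a witness $v \in S$ not lying below any vertex of $S'$ that is ``shared,'' and examine how $v$ is dominated in $S'$. Either some element of $S'$ strictly dominates $v$ with slack, in which case I can push $v$ up by exactly one level to form an intermediate $R$; or the mismatch involves more than two leaves collapsing, in which case I can collapse only one sibling pair first. The main obstacle will be bookkeeping: showing that such a one-step intermediate $R$ is genuinely an antichain in $T$ and genuinely satisfies $S <_\Bintree R <_\Bintree S'$ (both inequalities strict). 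I anticipate needing a careful invariant describing the ``symmetric difference'' of the two antichains in the tree, and I expect the cleanest formulation is to prove the contrapositive: if the pair does \emph{not} satisfy conditions $1.$ and $2.$, then an intermediate antichain exists, so the pair is not a gap.

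Finally, I would tie the two directions together and note the natural reading of the proposition: the gaps of $(\Bintree,\leq_\Bintree)$ are exactly the ``elementary sibling-merges,'' which via the isomorphism with $(\Words,\leq_\Words)$ recovers and generalizes the sample gap $A=\{0\}$, $B=\{00,01\}$ exhibited earlier.
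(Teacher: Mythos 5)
Your plan follows the same route as the paper: sufficiency by showing that any antichain squeezed between the two given ones must contain the shared vertices together with either the parent or both sons, and necessity by producing an intermediate antichain whenever the pair is not an elementary sibling move. So the question is whether the details you defer would go through, and there is where the problems lie. A first, repairable, issue is that you have inverted the order: by the paper's definition, $S'\leq_\Bintree S$ says that every root-to-$v$ path with $v\in S$ meets $S'$, so the $\leq_\Bintree$-smaller antichain is the one \emph{nearer the root}. Consequently, in a gap $S<_\Bintree S'$ the parent $s$ lies in the smaller set $S$ and the two sons lie in the larger set $S'$, exactly as condition 1 of the proposition states; the typo sits in condition 2, which should read $S\setminus\{s\}=S'\setminus\{s_0,s_1\}$. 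Your ``correct reading,'' which puts the parent in $S'$, comes from the inverted picture (``going up means going toward the root''). Your forward-direction verification is still fine, since it amounts to checking parent-set $\leq_\Bintree$ sons-set, and since $S,S'$ are bound names the swap is only cosmetic; but the same confusion resurfaces in the converse.

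The genuine gap is in the converse. Your contrapositive rests on the dichotomy: either some element dominates the chosen witness ``with slack,'' or ``more than two leaves collapse.'' This dichotomy is not exhaustive, and the missing cases are exactly what makes the converse nontrivial. Two comparable pairs that are not of the stated form and are not gaps, yet trigger neither of your two moves: (i) $\{0\}<_\Bintree\{00\}$, where a vertex is replaced by a \emph{single} son --- there is no slack (distance $1$) and only one leaf collapses; the intermediate is $\{00,01\}$, obtained by adding the missing sibling, a move absent from your list; (ii) $\{0,1\}<_\Bintree\{00,01\}$, where a sibling pair genuinely collapses onto $0$ but the vertex $1$ of the smaller antichain has no descendant at all in the larger one, so condition 2 fails; the intermediate is $\{00,01,1\}$, and again neither ``push up with slack'' nor ``collapse one pair first'' produces it. So the bookkeeping you postpone is not routine: one must also treat vertices replaced by a single or by deeper descendants, and vertices whose whole subtree is dropped. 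The paper's (admittedly terse) necessity argument handles this by first showing that if elements of $S'$ lie strictly below two distinct elements of $S$, then replacing just one of those elements of $S$ by the elements of $S'$ beneath it gives an intermediate antichain, and then arguing that the unique remaining replacement must consist of exactly the two sons, since otherwise an intermediate again exists. Until your case analysis is completed to cover all of these failure modes, the necessity half remains unproven.
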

This means that all gaps in $\Bintree$ result from replacing a member by its two sons.
\begin{proof}
Clearly any pair $S<S'$ satisfying $1.$, $2.$ is a gap (as any $S\leq_\Bintree S''\leq_\Bintree S'$ has to contain $S'\setminus\{s\}$, and either $s$ or the two vertices $s_0$, $s_1$).  

Let $S\leq_\Bintree S'$ be a gap.  If there are distinct vertices $s'_1$ and $s'_2$ in $S'$ and $s_1,s_2\in S$ are such that $s_i\leq s'_i$, i=1,2, then $S''$ defined as $min(S\setminus\{s_1\})\cup\{S'_1\}$ satisfies $S<_\Bintree S''<_\Bintree S'$.

Thus there is only one $S'\in S'\setminus S$ such that $s'>s$ for an $s\in S$. However then there is only one such $s'$ (so if $s_1,s_2$ are distinct then $S<S\setminus\{s_2\}<S'$). Moreover it is either $s=s'0$ or $s=s'1$. Otherwise $S<S'$ would not be a gap.
\end{proof}

The abundance of gaps indicates that $(\Bintree,\leq_\Bintree)$ (or
$(\Words,\leq_\Words)$) are redundant universal partial orders. This makes
them, in a way, far from being generic, since the generic partial order has no
gaps.  The next section has a variant of this partial order avoiding this
problem.  On the other hand gaps in partial orders are interesting and are
related to dualities, see \cite{Trotter, NZhu}.

\subsection{Intervals}
We show that the vertices of $(\Words,\leq_\Words)$ can be coded by geometric
objects ordered by inclusion.  Since we consider only countable structures we restrict ourselves to objects formed from rational numbers.

While the interval on rationals ordered by inclusion can represent infinite increasing chains,
decreasing chains or antichains, obviously this interval order has 
dimension 2 and thus fails to be universal. However
considering multiple intervals overcomes this limitation:
\begin{defn}
The vertices of $(\Intervals,\leq_\Intervals)$ are finite sets $S$ of closed
disjoint intervals  $[a,b]$ where $a$, $b$ are rational numbers and
$0\leq a<b\leq 1$.

We put $A\leq_\Intervals B$ when every interval in $A$ is covered by some interval of $B$. 
\end{defn}

In the other words elements of $(\Intervals,\leq_\Intervals)$ are finite sets of pairs of rational
numbers.  $A\leq_\Intervals B$ holds if for every $[a,b]\in A$, there is
an $[a',b']\in B$ such that $a'\leq a$ and $b\leq b'$.

\begin{defn} 
 A word $W=w_1w_2\ldots w_t$ on the alphabet $\{0,1\}$ can be considered as a number $0\leq n_W\leq 1$ 
with ternary expansion: $$n_W=\sum_{i=1}^{t} w_i{1\over 3^i}.$$  

For $A\in \Words$, the representation of $A$ in $\Intervals$ is then the following set of
intervals:

$$\Embed{\Words}{\Intervals}(A)=\{[n_W,n_W+{2\over 3^{|W|+1}}]: W\in A\}.$$
\end{defn}

The use of the ternary base might seem unnatural---indeed the binary base would
suffice. The main obstacle to using the later is that the embedding of $\{00,01\}$ would be two
intervals adjacent to each other overlapping in single point.  One would need
to take special care when taking the union of such intervals---we avoid this by
using ternary numbers.

\begin{lem}
$\Embed{\Words}{\Intervals}$ is a embedding of $(\Words,\leq_{\Words})$ into $(\Intervals,\leq_{\Intervals})$.

\end{lem}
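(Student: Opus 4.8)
The plan is to show that $\Embed{\Words}{\Intervals}$ preserves and reflects the order relation, i.e. that $A\leq_{\Words}B$ holds if and only if $\Embed{\Words}{\Intervals}(A)\leq_{\Intervals}\Embed{\Words}{\Intervals}(B)$, and additionally that it maps the members of a valid antichain $A$ to a valid vertex of $\Intervals$ (a finite set of pairwise disjoint intervals). The key geometric observation driving everything is the correspondence between the tree order $\leq_w$ on words and the containment order on the associated intervals. For a word $W=w_1\ldots w_t$, the interval $I_W=[n_W, n_W+\tfrac{2}{3^{t+1}}]$ has length $\tfrac{2}{3^{t+1}}$, and I would first verify the crucial nesting lemma: for words $W,W'$ we have $W\leq_w W'$ (that is, $W'$ is an initial segment of $W$) if and only if $I_W\subseteq I_{W'}$.

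To establish that nesting lemma I would compute directly. If $W'$ is a prefix of $W$ of length $s\leq t$, then $n_W$ and $n_{W'}$ agree on the first $s$ ternary digits, so $n_{W'}\leq n_W$, and the remaining digits of $W$ contribute at most $\sum_{i=s+1}^{t}\tfrac{1}{3^i} < \tfrac{1}{2}\cdot\tfrac{1}{3^{s}}\cdot\tfrac{1}{1}$; the right endpoints must be compared using the slack term $\tfrac{2}{3^{i+1}}$. The point of the ternary base (as the authors note) is precisely that $I_W$ sits strictly inside $I_{W'}$ with room to spare: the extra factor ensures that the right endpoint $n_W+\tfrac{2}{3^{t+1}}$ does not exceed $n_{W'}+\tfrac{2}{3^{s+1}}$, and that intervals coming from incomparable words stay disjoint rather than merely touching. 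Conversely, if $W'$ is not a prefix of $W$, the two numbers $n_W,n_{W'}$ first differ in some ternary digit, and the gap this creates (again thanks to the factor of $2$ over $3$) is large enough to force the intervals to be disjoint, so neither contains the other.

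With the nesting lemma in hand the rest is essentially bookkeeping. First, since $A\in\Words$ is an antichain under $\leq_w$, no word of $A$ is a prefix of another, so by the lemma the intervals $\{I_W : W\in A\}$ are pairwise non-nested, and by the disjointness half of the lemma they are in fact pairwise disjoint; hence $\Embed{\Words}{\Intervals}(A)$ is a legitimate vertex of $\Intervals$. Second, for the order equivalence: if $A\leq_{\Words}B$ then every $W\in A$ has some $W'\in B$ with $W\leq_w W'$, whence $I_W\subseteq I_{W'}$, so every interval of $\Embed{\Words}{\Intervals}(A)$ is covered by one of $\Embed{\Words}{\Intervals}(B)$, giving $\Embed{\Words}{\Intervals}(A)\leq_{\Intervals}\Embed{\Words}{\Intervals}(B)$. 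Conversely, if the image intervals satisfy the covering relation, then each $I_W$ (for $W\in A$) lies inside some $I_{W'}$ (for $W'\in B$); by the reflecting direction of the nesting lemma this forces $W\leq_w W'$, hence $A\leq_{\Words}B$. Since the map is clearly injective (distinct antichains yield distinct interval sets, again via the lemma), this shows $\Embed{\Words}{\Intervals}$ is an embedding.

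The main obstacle is the nesting lemma itself, and specifically getting the endpoint inequalities to come out strictly in the right direction: one must check not just $n_{W'}\leq n_W$ but that the full interval $I_W$ fits inside $I_W'$ when $W'$ is a prefix, and that otherwise the intervals are genuinely disjoint. This is where the choice of ternary base does the real work — the slack $\tfrac{2}{3^{t+1}}$ is tuned so that the ``unused'' digit value $2$ never lets adjacent intervals overlap or abut, which would happen in base $2$ (as the authors explicitly flag with the example $\{00,01\}$). I would handle this by a clean estimate on the tail sum $\sum_{i>s}\tfrac{2}{3^{i+1}}$ comparing it against $\tfrac{1}{3^{s}}$, confirming that a single-digit difference at position $s+1$ always dominates the total contribution of all later digits plus the slack.
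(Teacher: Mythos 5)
Your proof is correct and takes essentially the same approach as the paper's: both reduce the lemma to the single-word nesting claim that the interval assigned to $W$ is contained in the interval assigned to $W'$ if and only if $W'$ is an initial segment of $W$, and then lift this to finite antichains of words. The only difference is one of detail --- the paper disposes of the nesting claim in one sentence (the interval of $W$ consists precisely of the numbers whose ternary expansion begins with $W$, excluding the upper endpoint), while you verify it by explicit endpoint and tail-sum estimates and also spell out the disjointness/well-definedness and injectivity bookkeeping that the paper leaves implicit.
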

\begin{proof}
It is sufficient to prove that for $W$, $W'$ there is an interval $[n_W,n_W+{1\over
3^{|W|}}]$ covered by an interval $[n_{W'},n_{W'}+{1\over 3^{|W'|}}]$ if and only if $W'$ is
initial segment of $W$. This follows easily from the fact that
intervals represent precisely all numbers whose ternary expansion starts with
$W$ with the exception of the upper bound itself.
\end{proof}

\begin{example}
The representation of $([4],\leq_P)$ as defined by Figure~\ref{poset} in $(\Intervals,\leq_\Intervals)$ is:
$$
\begin{array}{lllll}
\Embed{\Words}{\Intervals}(\Psi([1],\leq_{P_1}))&=&\Embed{\Words}{\Intervals}(\{0\})&=&\{(0,{2\over 3^2})\},\\
\Embed{\Words}{\Intervals}(\Psi([2],\leq_{P_2}))&=&\Embed{\Words}{\Intervals}(\{0,10\})&=&\{(0,{2\over 3^2}),({1\over 3}, {1\over 3}+{2\over 3^3})\},\\
\Embed{\Words}{\Intervals}(\Psi([3],\leq_{P_3}))&=&\Embed{\Words}{\Intervals}(\{000,100\})&=&\{(0,{2\over 3^4}),({1\over 3},{1\over 3}+{2\over 3^4})\},\\
\Embed{\Words}{\Intervals}(\Psi([4],\leq_{P_4}))&=&\Embed{\Words}{\Intervals}(\{0000\})&=&\{(0,{2\over 3^5})\}.
\end{array}
$$
\end{example}

\begin{corollary}
The partial order $(\Intervals,\leq_\Intervals)$ is universal.
\end{corollary}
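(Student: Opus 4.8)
The plan is to obtain the universality of $(\Intervals,\leq_\Intervals)$ essentially for free by composing the two maps already constructed, so that no genuinely new argument is needed. Both ingredients are in place: the Corollary following Theorem \ref{Pembed} asserts that $(\Words,\leq_\Words)$ is universal, and the preceding Lemma asserts that $\Embed{\Words}{\Intervals}$ is an embedding of $(\Words,\leq_\Words)$ into $(\Intervals,\leq_\Intervals)$.

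Concretely, I would argue as follows. Fix an arbitrary countable partial order $(P,\leq_P)$. By universality of $(\Words,\leq_\Words)$ there is an embedding $\varphi\colon (P,\leq_P)\to(\Words,\leq_\Words)$. I then compose it with $\Embed{\Words}{\Intervals}$ to obtain $\Embed{\Words}{\Intervals}\circ\varphi\colon (P,\leq_P)\to(\Intervals,\leq_\Intervals)$. The final step is to observe that a composition of embeddings is again an embedding: each of $\varphi$ and $\Embed{\Words}{\Intervals}$ is injective and satisfies $x\leq y \iff f(x)\leq f(y)$, and both properties are manifestly preserved under composition. Hence $\Embed{\Words}{\Intervals}\circ\varphi$ witnesses that $(P,\leq_P)$ embeds into $(\Intervals,\leq_\Intervals)$, and since $(P,\leq_P)$ was arbitrary this establishes universality.

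There is no real obstacle here; the substantive content has already been discharged by the Lemma, whose verification reduces the covering relation on ternary-coded intervals to the initial-segment relation on words. The only point worth a moment's care is to confirm that the domain of $\Embed{\Words}{\Intervals}$ really is all of $\Words$ (every finite antichain of words), so that it may be applied to the whole image $\varphi(P)$ regardless of which finite word-sets arise; this is immediate from the definition of $\Embed{\Words}{\Intervals}$, which assigns a finite set of intervals to each member of $\Words$.
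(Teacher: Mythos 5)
Your proposal is correct and matches the paper's own (implicit) argument: the paper derives this corollary precisely by combining the universality of $(\Words,\leq_\Words)$ with the preceding lemma that $\Embed{\Words}{\Intervals}$ embeds $(\Words,\leq_\Words)$ into $(\Intervals,\leq_\Intervals)$, the composition of embeddings being an embedding. Nothing further is needed.
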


The partial order $(\Intervals,\leq_\Intervals)$ differs significantly from
$(\Words,\leq_\Words)$ by the following:

\begin{prop}
The partial order $(\Intervals,\leq_\Intervals)$ has no gaps (is dense).
\end{prop}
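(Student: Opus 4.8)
The plan is to show that between any two comparable elements $A <_\Intervals B$ we can always insert a third element $C$ with $A <_\Intervals C <_\Intervals B$, which is precisely what it means for $(\Intervals,\leq_\Intervals)$ to have no gaps. The fundamental reason this works, and why it contrasts with $(\Bintree,\leq_\Bintree)$, is that the rationals are densely ordered and an interval can be shrunk by an arbitrarily small rational amount while still covering the same words, or grown slightly without covering new ones. So the obstruction that caused gaps in the tree representation---that a vertex has only two immediate successors (its sons), forcing a discrete jump---disappears when we pass to intervals, where each interval has a continuum of proper subintervals available.

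\begin{proof}
Let $A <_\Intervals B$. We construct $C$ with $A <_\Intervals C <_\Intervals B$. Since $A \ne B$, either some interval of $A$ is covered by an interval of $B$ strictly larger than it, or some interval of $B$ covers no interval of $A$. We treat the two cases.

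First suppose there is $[a',b'] \in B$ that covers no interval of $A$. Choose rationals $a'', b''$ with $a' < a'' < b'' < b'$ such that $[a'',b'']$ is disjoint from every interval of $A$ (possible since $A$ has finitely many intervals and the rationals are dense). Let $C = A \cup \{[a'',b'']\}$, after discarding any interval of $A$ already covered by $[a'',b'']$ to keep $C$ a set of disjoint intervals; by the choice of $[a'',b'']$ no such discarding is in fact needed. Then $A <_\Intervals C$ since every interval of $A$ is covered by itself in $C$ while $[a'',b'']\in C$ is covered by no interval of $A$, and $C <_\Intervals B$ since $[a'',b''] \subseteq [a',b']$ and every interval of $A$ is covered by some interval of $B$.

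Otherwise every interval of $B$ covers some interval of $A$, but since $A \ne B$ and $A \leq_\Intervals B$, there is an interval $[a,b] \in A$ and an interval $[a',b'] \in B$ with $[a,b] \subseteq [a',b']$ and $[a,b] \ne [a',b']$, say $a' < a$ (the case $b < b'$ is symmetric). Pick a rational $a''$ with $a' < a'' < a$. Form $C$ from $A$ by replacing $[a,b]$ with $[a'',b]$ and then taking the set of its intervals. Then $A <_\Intervals C$ because $[a,b] \subseteq [a'',b]$ while $[a'',b]$ is covered by no interval of $A$ (as $a'' < a$ and the intervals of $A$ are disjoint from the enlarged portion), and $C <_\Intervals B$ because $[a'',b] \subseteq [a',b']$ and all other intervals of $C$ coincide with intervals of $A$, each covered in $B$. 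In both cases the inserted element $C$ witnesses that $A <_\Intervals B$ is not a gap, so $(\Intervals,\leq_\Intervals)$ is dense.
\end{proof}

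The step I expect to require the most care is verifying the strictness of both inequalities $A <_\Intervals C$ and $C <_\Intervals B$ simultaneously in each case: it is easy to enlarge or add an interval in a way that accidentally collapses to $A$ or to $B$, so the geometric choices (disjointness from $A$, strict containment in an interval of $B$) must be made explicitly using density of $\Q$. The bookkeeping to keep $C$ a valid vertex---a finite set of pairwise disjoint intervals inside $[0,1]$---is routine but is where one must confirm no two chosen intervals overlap.
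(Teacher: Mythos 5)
Your overall strategy---inserting an element $C$ strictly between $A$ and $B$---is the same as the paper's, and your Case 1 is essentially correct. The genuine flaw is in Case 2. You choose $a''$ subject only to $a' < a'' < a$ and then assert, parenthetically, that ``the intervals of $A$ are disjoint from the enlarged portion'' $[a'',a)$. This is false in general: take $A = \{[1/4,\,2/5],\,[1/2,\,3/4]\}$ and $B = \{[1/5,\,4/5]\}$. Here every interval of $B$ covers an interval of $A$, so this is your Case 2, with $[a,b]=[1/2,3/4]$ and $[a',b']=[1/5,4/5]$. The choice $a''=3/10$ satisfies $a'<a''<a$, yet $[a'',b]=[3/10,\,3/4]$ overlaps $[1/4,2/5]\in A$, so your $C$ is not a set of pairwise disjoint intervals and hence is not an element of $\Intervals$ at all. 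The escape clause ``and then taking the set of its intervals'' does not rescue this: if it means merging overlapping intervals, then your later claim that ``all other intervals of $C$ coincide with intervals of $A$'' is no longer true, and both inequalities would have to be reproved for the merged interval. The repair is easy but must be stated: since $A$ consists of finitely many pairwise disjoint closed intervals, let $d$ be the largest right endpoint of an interval of $A$ lying entirely to the left of $a$ (set $d=0$ if there is none), and pick the rational $a''$ in the nonempty gap $(\max\{a',d\},\,a)$. With that choice your parenthetical becomes true and Case 2 goes through.

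Two smaller points. First, in both cases you verify only $C\leq_\Intervals B$ and never argue strictness of that inequality; since $\leq_\Intervals$ is a partial order, it suffices to observe $C\neq B$ (for instance in Case 2, $[a'',b]\notin B$ because it lies properly inside $[a',b']$ and distinct intervals of $B$ are disjoint), but some sentence is needed, as your own closing remark anticipates. Second, for comparison: the paper avoids your Case 2 problem entirely by modifying $B$ rather than $A$---it takes an interval $I$ of $B$ that is not fully covered by the union of the intervals of $A$ and shortens or splits $I$; all new intervals stay inside $I$, so disjointness from the rest of $B$ is automatic. Growing an interval of $A$ outward, as you do, is precisely the move that risks collision with its neighbours.
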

\begin{proof}
Take $A,B\in \Intervals$, $A<_\Intervals B$. Because all the intervals in both $A$ and $B$ are closed and disjoint, there must be at least one interval $I$ in $B$ that is not fully covered by intervals of $A$ (otherwise we would have $B\leq_\Intervals A$). We may construct an element $C$ from $B$ by shortening the interval $I$  or splitting it into two disjoint intervals in a way such that $A<_\Intervals C<_\Intervals B$ holds.
\end{proof}
Consequently the presence (and abundance) of gaps in most of the universal
partial orders studied is not the main obstacle when looking for
representations of partial orders.  It is easy to see that
$(\Intervals,\leq_\Intervals)$ is not generic.

By considering a variant of $(\Intervals,\leq_\Intervals)$ with open 
(instead of closed) intervals we obtain a universal partial order
$(\Intervals',\leq_{\Intervals'})$ with gaps. The gaps are similar to the ones in
$(\Bintree,\leq_\Bintree)$ created by replacing interval $(a,b)$ by two
intervals $(a,c)$ and $(c,d)$.  Half open intervals give a
quasi-order containing a universal partial order.

\subsection{Geometric representations}
The representation as a set of intervals might be considered an
artificially constructed structure.  Partial orders represented by geometric
objects are studied in \cite{Alon}. It is shown that objects with $n$ ``degrees
of freedom'' cannot represent all partial orders of dimension $n+1$. It follows that
convex hulls used in the representation of the generic partial order cannot be defined by a constant
number of vertices.  We will show that even the simplest geometric objects with
unlimited ``degrees of freedom'' can represent a universal partial order.

\begin{defn}
Denote by $(\Convex,\leq_\Convex)$ the partial order whose vertices are all convex hulls
of finite sets of points in $\mathbb Q^2$, ordered by inclusion.
\end{defn}
This time we will embed $(\Intervals,\leq_\Intervals)$ into $(\Convex,\leq_\Convex)$.
\begin{defn}
For every $A\in \Intervals$ denote by $\Embed{\Intervals}{\Convex}(A)$ the convex hull generated by the points: $$(a,a^2),
({{a+b}\over 2}, ab), (b,b^2)\hbox{, for every } (a,b)\in A.$$
\end{defn}
  \begin{figure}[t]
    \begin{center}
1:
      \includegraphics[width=5cm]{t.1}
2:
      \includegraphics[width=5cm]{t.2}\\
3:
      \includegraphics[width=5cm]{t.3}
4:
      \includegraphics[width=5cm]{t.4}
    \end{center}
    \caption{Representation of the partial order $([4],\leq_P)$ in $(\Convex,\leq_\Convex)$.}
    \label{posetconvex}
  \end{figure}
See Figure~\ref{posetconvex} for the representation of the partial order in Figure~\ref{poset}.
\begin{thm}
$\Embed{\Intervals}{\Convex}$ is an embedding of $(\Intervals,\leq_\Intervals)$ to $(\Convex,\leq_\Convex)$.
\end{thm}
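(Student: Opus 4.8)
The plan is to show that $\Embed{\Intervals}{\Convex}$ is an order-embedding, i.e. that for $A,B\in\Intervals$ one has $A\leq_\Intervals B$ if and only if $\Embed{\Intervals}{\Convex}(A)\subseteq\Embed{\Intervals}{\Convex}(B)$; injectivity then follows from the antisymmetry of $\leq_\Intervals$. The whole argument rests on one elementary observation about the parabola $y=x^2$: the middle generating point $(\frac{a+b}{2},ab)$ attached to an interval $[a,b]$ is exactly the intersection of the tangent lines to the parabola at $a$ and at $b$ (the tangent at $t$ is $y=2tx-t^2$, and the two tangents meet at $x=\frac{a+b}{2}$, $y=ab$). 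Consequently the image of a single interval $[a,b]$ is the triangle
$$T_{[a,b]}=\{(x,y): y\geq 2ax-a^2,\ y\geq 2bx-b^2,\ y\leq (a+b)x-ab\},$$
bounded below by the two tangents and above by the chord, and a direct check ($(x-a)^2\geq0$, $(x-b)^2\geq0$, and $(x-a)(x-b)\leq0$ for $x\in[a,b]$) shows that the parabola arc $\{(t,t^2):t\in[a,b]\}$ lies inside $T_{[a,b]}$. I will write $C_A=\Embed{\Intervals}{\Convex}(A)$ for the full convex hull.

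For the forward direction, I would assume $A\leq_\Intervals B$ and fix $[a,b]\in A$ covered by $[a',b']\in B$, so $a'\leq a<b\leq b'$, and prove $T_{[a,b]}\subseteq T_{[a',b']}\subseteq C_B$ by checking the three vertices of $T_{[a,b]}$ against the three half-plane constraints of $T_{[a',b']}$. The two endpoints $(a,a^2),(b,b^2)$ lie on the parabola arc over $[a',b']$, hence in $T_{[a',b']}$. For the middle vertex the two tangent constraints reduce to the sign computations $ab-(2a'\cdot\frac{a+b}{2}-a'^2)=(a-a')(b-a')\geq0$ and $ab-(2b'\cdot\frac{a+b}{2}-b'^2)=(a-b')(b-b')\geq0$, while the chord constraint follows because the middle point lies below the parabola ($ab\leq(\frac{a+b}{2})^2$) and the parabola lies below the chord of $[a',b']$ at $x=\frac{a+b}{2}\in[a',b']$. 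Convexity then gives $T_{[a,b]}\subseteq T_{[a',b']}\subseteq C_B$; since every generating point of $C_A$ is a vertex of some such triangle, $C_A\subseteq C_B$.

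The converse is the heart of the matter, and the key lemma—which I expect to be the main obstacle—is that $C_B$ meets the parabola exactly in the union of arcs over the intervals of $B$: $(t,t^2)\in C_B$ iff $t\in[a',b']$ for some $[a',b']\in B$. The inclusion $\supseteq$ is the arc-in-triangle fact above. For $\subseteq$ I would separate by the tangent line $L_t(x)=2tx-t^2$: every parabola generating point satisfies $s^2-L_t(s)=(s-t)^2$, and every middle generating point of $[a',b']$ satisfies $a'b'-L_t(\frac{a'+b'}{2})=(a'-t)(b'-t)$. Hence if $t$ lies outside every (closed) interval of $B$, all generators of $C_B$ lie \emph{strictly} above $L_t$, so $C_B$ lies in the open half-plane $y>2tx-t^2$ while $(t,t^2)$ lies on the line; thus $(t,t^2)\notin C_B$. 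This sign bookkeeping is exactly where one must be careful, since the middle points sit below the parabola and only the ``outside'' position of $t$ keeps them above $L_t$.

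With the lemma in hand the converse is quick. Assume $C_A\subseteq C_B$ and fix $[a,b]\in A$. Its three generating points lie in $C_A\subseteq C_B$, so $T_{[a,b]}\subseteq C_B$ by convexity, and therefore the whole arc $\{(t,t^2):t\in[a,b]\}\subseteq C_B$. By the lemma every $t\in[a,b]$ lies in some interval of $B$, i.e. $[a,b]\subseteq\bigcup_{[a',b']\in B}[a',b']$; since the intervals of $B$ are closed, disjoint and hence separated by gaps, the connected set $[a,b]$ must sit inside a single interval $[a',b']\in B$, giving $a'\leq a<b\leq b'$. Thus $[a,b]$ is covered, and as $[a,b]\in A$ was arbitrary, $A\leq_\Intervals B$. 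This completes both directions and shows $\Embed{\Intervals}{\Convex}$ is an embedding.
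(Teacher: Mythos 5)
Your proof is correct and takes essentially the same route as the paper's: the same tangent-intersection observation, the same key lemma that $(t,t^2)$ lies in the hull exactly when $t$ belongs to one of the intervals, and the same subinterval-triangle containment for the forward direction. The only difference is that you supply the details the paper leaves unproved (the explicit half-plane checks and the tangent-line separation argument for the key lemma), which is a completion of the same argument rather than a different approach.
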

\begin{proof}
All points of the form $(x,x^2)$ lie on a convex parabola $y=x^2$. The points $
({{a+b}\over 2}, ab)$ are the intersection of two tangents of this parabola at the
points $(a,a^2)$ and $(b,b^2)$. Consequently all points in the construction of
$\Embed{\Intervals}{\Convex}(A)$ lie in a convex configuration.

We have $(x,x^2)$ in the convex hull $\Embed{\Intervals}{\Convex}(A)$ if and only if there is $[a,b]\in A$ such that $a\leq x\leq b$.  Thus for $A,B\in \Intervals$ we have $\Embed{\Intervals}{\Convex}(A)\leq_\Convex\Embed{\Intervals}{\Convex}(B)$ implies $A\leq_\Intervals B$.

To see the other implication, observe that the convex hull of $(a,a^2)$, $({{a+b}\over 2}, ab)$, $(b,b^2)$ is a subset of the convex hull of $(a',a'^2)$, $({{a'+b'}\over 2}, a'b'),$ $(b',b'^2)$ for every $[a,b]$ that is a  subinterval of $[a',b']$.
\end{proof}

We have:
\begin{corollary}
The partial order $(\Convex,\leq_\Convex)$ is universal.
\end{corollary}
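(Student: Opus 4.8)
The plan is to deduce universality of $(\Convex,\leq_\Convex)$ from the universality of $(\Intervals,\leq_\Intervals)$ --- established in the previous corollary --- together with the embedding theorem just proved, by observing that universality is transported along embeddings. Concretely, I would rely on the elementary fact that a composition of two embeddings of partial orders is again an embedding: if $\varphi\colon (P,\leq_P)\to(Q,\leq_Q)$ and $\psi\colon (Q,\leq_Q)\to(R,\leq_R)$ satisfy $x\leq_P y \iff \varphi(x)\leq_Q\varphi(y)$ and the analogous equivalence for $\psi$, then $\psi\circ\varphi$ satisfies $x\leq_P y \iff \psi(\varphi(x))\leq_R\psi(\varphi(y))$, and injectivity is likewise preserved.

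First I would take an arbitrary countable partial order $(P,\leq_P)$. By the universality of $(\Intervals,\leq_\Intervals)$ there is an embedding $\varphi\colon (P,\leq_P)\to(\Intervals,\leq_\Intervals)$. By the preceding theorem the map $\Embed{\Intervals}{\Convex}$ is an embedding of $(\Intervals,\leq_\Intervals)$ into $(\Convex,\leq_\Convex)$. Composing these two, $\Embed{\Intervals}{\Convex}\circ\varphi$ is an embedding of $(P,\leq_P)$ into $(\Convex,\leq_\Convex)$, which is exactly the assertion that $(\Convex,\leq_\Convex)$ is universal.

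There is essentially no obstacle at this step: all of the geometric work --- placing the points $(a,a^2)$, $({a+b\over 2},ab)$, $(b,b^2)$ on the parabola $y=x^2$ so that inclusion of convex hulls matches interval containment --- has already been carried out in the embedding theorem, and the only remaining content is the (routine) transitivity of embeddability. For completeness I would also remark that $(\Convex,\leq_\Convex)$ is itself a countable partial order, since there are only countably many finite subsets of $\mathbb Q^2$ and hence only countably many convex hulls of such sets; thus $(\Convex,\leq_\Convex)$ is in fact a countable universal partial order, which is the sharpest form of the statement.
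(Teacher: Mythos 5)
Your proposal is correct and is exactly the argument the paper intends: the corollary is stated without proof precisely because it follows by composing the embedding $\Embed{\Intervals}{\Convex}$ of the preceding theorem with the universality of $(\Intervals,\leq_\Intervals)$, just as you do. Your added remark that $(\Convex,\leq_\Convex)$ is itself countable is a harmless (and correct) refinement.
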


\begin{remark}
Our construction is related to Venn diagrams.  Consider the partial order $([n],\leq_P)$.  For the empty relation $\leq_P$ the representation constructed by $\Embed{\Intervals}{\Convex}(\Embed{\Words}{\Intervals}(\Psi([n],\emptyset)))$ is a Venn diagram, by Theorem \ref{Pembed} $(2.)$. Statement 2 of Theorem \ref{Pembed} can be seen as a Venn diagram condition under the constraints imposed by $\leq_P$.
\end{remark}

The same construction can be applied to functions, and stated in a perhaps more precise manner.
\begin{corollary}
Consider the class $\Functions$ of all convex piecewise linear functions  on the interval $(0,1)$ consisting of a finite set of segments, each with rational boundaries.  Put $f\leq_\Functions g$ if and only if $f(x)\leq g(x)$ for every $0\leq x\leq 1$. Then the partial order $(\Functions,\leq_\Functions$) is universal.
\end{corollary}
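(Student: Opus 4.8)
The plan is to reuse the convex hulls $\Embed{\Intervals}{\Convex}(A)$ already constructed and to pass from a convex body to its support function, which is exactly the device that converts \emph{inclusion} of convex sets into the \emph{pointwise} order of functions. Since $(\Intervals,\leq_\Intervals)$ is universal and $\Embed{\Intervals}{\Convex}$ embeds it into $(\Convex,\leq_\Convex)$ with $A\leq_\Intervals B$ if and only if $\Embed{\Intervals}{\Convex}(A)\subseteq\Embed{\Intervals}{\Convex}(B)$, it suffices to map these polygons into $\Functions$ so that inclusion becomes $\leq_\Functions$. For a convex hull $K=\Embed{\Intervals}{\Convex}(A)$ with (rational) vertex set $V$, I would set
$$g_K(s)=\max_{(p,q)\in V}\bigl(sp-q\bigr),$$
the value of the support function of $K$ in the downward direction $(s,-1)$. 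As a maximum of finitely many affine functions of $s$ with rational coefficients, $g_K$ is convex and piecewise linear with rational breakpoints.

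First I would record the easy half: if $K\subseteq K'$ then $g_K\leq g_{K'}$ pointwise, since the support function is monotone under inclusion. The converse is where the real work lies, because downward directions alone do not in general recover a convex body (a tall thin set and a single point can share the same $g$). Here I would exploit the special geometry established in the proof of the previous theorem: every $K=\Embed{\Intervals}{\Convex}(A)$ is inscribed under the parabola $y=x^2$, its extreme points are $(\alpha,\alpha^2)$ and $(\beta,\beta^2)$ with $\alpha=\min_i a_i$ and $\beta=\max_i b_i$, and its entire upper boundary is the single secant joining these two points. The asymptotic slopes of $g_K$ are exactly $\beta$ (as $s\to+\infty$) and $\alpha$ (as $s\to-\infty$), so $g_K\leq g_{K'}$ forces $[\alpha,\beta]\subseteq[\alpha',\beta']$; and since the top secant of a subinterval of the fixed parabola lies below that of a larger interval, the upward supports are then automatically ordered as well. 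Combining the ordered downward supports (the hypothesis) with the ordered upward supports yields $h_K\leq h_{K'}$ in every direction, hence $K\subseteq K'$. This equivalence $g_K\leq g_{K'}\iff K\subseteq K'$ is the heart of the argument and the step I expect to be the main obstacle, precisely because it is false for general polygons and must be forced by the parabola structure.

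It remains to land inside $\Functions$, i.e.\ on the bounded domain $(0,1)$. All edges of the lower boundary of $K$ are tangents or secants of $y=x^2$ over $[0,1]$, so their slopes, and therefore all breakpoints of $g_K$, lie in $[0,2]$; beyond this range $g_K$ is affine, with slope $\alpha$ on the left and $\beta$ on the right. Consequently $g_K\leq g_{K'}$ on a fixed bounded window such as $s\in[-3,3]$ already implies $g_K\leq g_{K'}$ on all of $\mathbb{R}$, since the two outermost affine pieces carry the tail slopes $\alpha,\beta$ and hence control the tails. I would therefore define $F_A(x)=g_{\Embed{\Intervals}{\Convex}(A)}(6x-3)$ for $x\in(0,1)$; the affine substitution $s=6x-3$ preserves convexity, piecewise linearity and rationality of the breakpoints, so $F_A\in\Functions$ (the minimum element, the empty antichain, being sent to any fixed function below all others). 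By the previous paragraph $A\leq_\Intervals B$ if and only if $F_A\leq_\Functions F_B$, so $A\mapsto F_A$ is an embedding of the universal order $(\Intervals,\leq_\Intervals)$ into $(\Functions,\leq_\Functions)$, which proves the universality of $(\Functions,\leq_\Functions)$.
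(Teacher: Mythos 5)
Your proof is correct in its essentials, but it is worth noting what the paper actually does here: it gives no argument at all, only the remark that ``the same construction can be applied to functions,'' leaving the reader to interpret the convex hulls of the previous theorem as convex piecewise linear functions. Your write-up supplies precisely the nontrivial bridge that this remark glosses over, namely how to convert \emph{inclusion} of the hulls $\Embed{\Intervals}{\Convex}(A)$ into the \emph{pointwise} order on functions. Your key lemma --- that for these parabola-inscribed polygons the lower support function $g_K(s)=\max_{(p,q)}(sp-q)$ satisfies $g_K\leq g_{K'}$ iff $K\subseteq K'$ --- is genuinely needed and your argument for it is sound: the asymptotic slopes recover $[\alpha,\beta]\subseteq[\alpha',\beta']$, the upper boundaries are single secants of the fixed parabola and hence automatically ordered (a linear function $s\gamma+\gamma^2$ is convex in $\gamma$, so the upper support of $K$ is dominated once the extreme points of $K$ lie in $[\alpha',\beta']$), and support domination in all directions gives inclusion. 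The window argument is also correct, though its justification is terser than it should be: ordering of two affine tails on $[2,3]$ propagates to $[3,\infty)$ only because the tail slopes $\beta,\beta'$ lie in $[0,1]$, so $\beta+\beta'\leq 2<3$ rules out $\beta>\beta'$; a careful write-up should say this explicitly. The one actual slip is the parenthetical treatment of the empty antichain: $\Functions$ has no minimum element (constant functions are unbounded below), so there is no ``fixed function below all others.'' This is harmless --- either observe that the nonempty elements of $\Intervals$ already form a universal suborder (add an artificial bottom to any poset before embedding it into $\Intervals$; nothing can then land on $\emptyset$), or send $\emptyset$ to a constant such as $-5$, which lies below every function in the image since all image functions are bounded below by $-4$ on the window --- but as stated the sentence is false. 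In exchange for this extra machinery (Legendre/support duality), your route yields a fully rigorous proof where the paper offers only an assertion.
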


Similarly the following holds:
\begin{thm}
Denote by $\Polynoms$ the class of all finite polynomials with rational coefficients.  For $p,q\in \Polynoms$, put $p\leq_\Polynoms q$ if and only if $p(x)\leq q(x)$ for $x\in (0,1)$. The partial order
$(\Polynoms,\leq_\Polynoms)$ is universal.
\end{thm}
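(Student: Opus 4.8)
The plan is to embed the already-established universal partial order $(\Functions,\leq_\Functions)$ of convex piecewise linear functions into $(\Polynoms,\leq_\Polynoms)$. Since $(\Functions,\leq_\Functions)$ is universal by the preceding corollary, it suffices to exhibit an embedding $\Functions\to\Polynoms$, i.e. a map assigning to each convex piecewise linear $f$ with rational breakpoints a polynomial $p_f$ with rational coefficients such that $f\leq_\Functions g$ if and only if $p_f\leq_\Polynoms p_g$, where order is pointwise comparison on the open interval $(0,1)$.

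The key obstacle is that a polynomial cannot literally equal a piecewise linear function with genuine corners, so I cannot hope for $p_f=f$ on $(0,1)$. Instead I would aim for a \emph{uniform approximation} that is good enough to preserve order. Concretely, I would first observe that to decide $f\leq_\Functions g$ it is enough to compare the finitely many values of $f$ and $g$ at the (rational) breakpoints of both functions, together with convexity; and that whenever $f\not\leq_\Functions g$ there is a rational point $x_0\in(0,1)$ and a rational $\delta>0$ with $f(x_0)>g(x_0)+\delta$. The idea is then to replace each $f$ by a polynomial $p_f$ lying within, say, $\varepsilon_f/3$ of $f$ uniformly on $(0,1)$, with the tolerances chosen so small (relative to the gaps produced by strict inequalities) that pointwise polynomial comparison exactly reproduces $\leq_\Functions$. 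By the Weierstrass approximation theorem such a polynomial exists, and by perturbing coefficients slightly one may take it to have rational coefficients while keeping the approximation within tolerance.

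The genuinely delicate part is making the tolerances \emph{consistent across the whole class} rather than tuned to a single pair $f,g$: the embedding must be a single fixed assignment $f\mapsto p_f$ that works simultaneously for all comparisons. The cleanest way around this is to avoid approximation altogether and instead encode each function exactly by exploiting the extra freedom polynomials enjoy. For instance, one can arrange the construction so that the relevant comparison always reduces to comparing the functions on a fixed countable dense set of rationals, and realize each convex piecewise linear $f$ as the pointwise supremum of finitely many affine (degree-one) pieces; replacing $\max$ of affine functions by a suitable sum of high even powers, $p_f(x)=\big(\sum_i (\ell_i(x))^{2N}\big)^{1/2N}$-style device fails to be polynomial, so instead I would keep the affine pieces and compose the interval construction: since affine functions are already polynomials and the interval order $(\Intervals,\leq_\Intervals)$ embeds here via the parabola construction of the previous theorem, one can route the embedding through $(\Intervals,\leq_\Intervals)$ using the same tangent-line geometry, where each interval contributes a genuine polynomial (a downward-opening quadratic supported over $[a,b]$) and the pointwise maximum is replaced by a sum of nonnegative bumps.

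Thus the concrete route I would carry out, in order, is: (1) recall universality of $(\Intervals,\leq_\Intervals)$; (2) to each interval $[a,b]$ associate the polynomial $q_{[a,b]}(x)=-(x-a)(x-b)$, which is positive exactly on $(a,b)$ and otherwise nonpositive; (3) to each $A\in\Intervals$ associate $p_A=\sum_{[a,b]\in A} c_{[a,b]}\,q_{[a,b]}$ for suitable positive rational weights, or more robustly use products/truncations so that the sign pattern of $p_A$ on $(0,1)$ records exactly which points are covered; (4) verify that $A\leq_\Intervals B$ iff the covered region of $A$ is contained in that of $B$, and show this is captured by $p_A\leq_\Polynoms p_B$ on $(0,1)$ after choosing weights so that overlaps do not create spurious inequalities. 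The main obstacle I anticipate is step (3)--(4): ensuring that the sum of bumps does not accidentally make $p_A(x)>p_B(x)$ at a point covered by both $A$ and $B$ merely because $A$'s bump there is taller. I would resolve this by normalizing each bump to the same fixed height profile and making the bumps tall and narrow enough (high degree) that the dominant term at any $x\in(0,1)$ is controlled by whether $x$ lies in a covering interval, so that coverage inclusion translates faithfully into the pointwise order.
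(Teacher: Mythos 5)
The paper contains no proof of this theorem to compare you against: it is stated with the explicit remark that the proof ``needs more involved tools of mathematical analysis'' and is deferred to a separate paper (joint with Robert \v S\'amal). So your proposal has to stand on its own, and as written it does not: the concrete construction in your steps (2)--(4) fails on the very first test case, and at points you did not anticipate. Take $A=\{[0.2,0.3]\}$ and $B=\{[0.1,1]\}$, so $A\leq_\Intervals B$. With $q_{[a,b]}(x)=-(x-a)(x-b)$ one has, as $x\to 0^+$, $q_{[0.2,0.3]}(x)\to -0.06$ while $q_{[0.1,1]}(x)\to -0.1$; hence $p_A(x)>p_B(x)$ for all small $x\in(0,1)$ and $p_A\not\leq_\Polynoms p_B$. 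The failure occurs at points covered by \emph{neither} set: there both polynomials are negative, and the \emph{wider} interval's bump is the more negative one, so inclusion of coverage does not translate into pointwise domination. Your anticipated difficulty (points covered by both sets) is also real, and your proposed fix makes the two difficulties collide: if you normalize all bumps to ``the same fixed height profile,'' then for nested intervals the narrow bump attains the common maximum at its midpoint, where the wide bump is strictly smaller, so again $p_A\not\leq_\Polynoms p_B$; if instead you make wider bumps taller to cure that, you must control the tails outside, and with tall, narrow, high-degree bumps the negative tails contributed by the \emph{other} intervals of the same set become enormous and swamp the positive bump at covered points (cross-talk in the sum). No choice of weights $c_{[a,b]}$ is exhibited that satisfies all these conflicting constraints simultaneously across the whole class, and that global consistency is exactly the content of the theorem.

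There is also a structural reason why the ``sign pattern / exact encoding'' strategy cannot be pushed through for polynomials, unlike for $\Intervals$, $\Convex$ and $\Functions$. Pointwise order on $(0,1)$ is a statement about magnitudes everywhere, not about signs, so knowing where $p_A$ is positive decides nothing about comparability; and the natural way to kill the magnitude problem --- forcing every $p_A$ to agree with one fixed canonical function (such as $x^2$ in the paper's parabola construction) off the covered set, and to exceed it on the covered set --- is impossible here, because two distinct polynomials cannot agree on any subinterval of $(0,1)$. Piecewise-defined objects can be glued locally; polynomials cannot, which is precisely what makes this theorem genuinely harder than the preceding ones and presumably why its proof requires the analytic machinery the authors defer to. To salvage your program you would need a mechanism that controls the polynomials' relative size at \emph{every} point of $(0,1)$ at once (covered by both, by one, or by neither), and nothing in the proposal provides it.
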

The proof of this theorem needs tools of mathematical analysis
The proof of this theorem needs more involved tools of mathematical analysis
and it will appear elsewhere (jointly with Robert \v S\'amal).

\subsection{Grammars}

The rewriting rules used in a context-free grammar can be also used to define
a universal partially ordered set.  
\begin{defn}

The vertices of $(\Grammar,\leq_\Grammar)$ are all words over the alphabet $\{\downarrow{},\uparrow{},0,1\}$ created from the word $1$ by the following rules:
$$
\begin{array}{rcl}
1&\to& \downarrow{}11\uparrow{},\\
1&\to& 0.
\end{array}
$$

$W\leq_\Grammar W'$ if and only if $W$ can be constructed from $W'$ by:
$$
\begin{array}{rcl}
1&\to& \downarrow{}11\uparrow{},\\
1&\to& 0,\\
\downarrow{}00\uparrow{}&\to& 0.
\end{array}$$
\end{defn}

$(\Grammar,\leq_\Grammar)$ is a quasi-order: the transitivity of $\leq_\Grammar$ follows from the composition of lexical transformations.

\begin{defn}
Given $A\in \Words$ construct $\Embed{\Words}{\Grammar}$ as follows:

\begin{enumerate}
\item $\Embed{\Words}{\Grammar}(\emptyset)=0$.

\item $\Embed{\Words}{\Grammar}(\{\hbox{empty word}\})=1$.

\item $\Embed{\Words}{\Grammar}(A)$ is defined as the concatenation $\downarrow\Embed{\Words}{\Grammar}(A_0)\Embed{\Words}{\Grammar}(A_1)\uparrow$, where $A_0$ is created from all words of $A$ starting with $0$ with the first digit  removed and $A_1$ is created from all words of $A$ starting with $1$ with the first digit removed.
\end{enumerate}
\end{defn}

\begin{example}
The representation of $([4],\leq_P)$ as defined by Figure~\ref{poset} in $(\Grammar,\leq_\Grammar)$ is as follows (see also the correspondence with the $\Bintree$ representation in Figure~\ref{strom-plny}):
$$
\begin{array}{lllll}
\Embed{\Words}{\Grammar}(\Psi([1],\leq_{P_1}))&=&\Embed{\Words}{\Grammar}(\{0\})&=&\downarrow{}10\uparrow{},\\
\Embed{\Words}{\Grammar}(\Psi([2],\leq_{P_2}))&=&\Embed{\Words}{\Grammar}(\{0,10\})&=&\downarrow{}1\downarrow{}10\uparrow{}\uparrow{},\\
\Embed{\Words}{\Grammar}(\Psi([3],\leq_{P_3}))&=&\Embed{\Words}{\Grammar}(\{000,100\})&=&\downarrow{}\downarrow{}\downarrow{}10\uparrow{}0\uparrow{}\downarrow{}\downarrow{}10\uparrow{}0\uparrow{}\uparrow{},\\
\Embed{\Words}{\Grammar}(\Psi([4],\leq_{P_4}))&=&\Embed{\Words}{\Grammar}(\{0000\})&=&\downarrow{}\downarrow{}\downarrow{}\downarrow{}10\uparrow{}0\uparrow{}0\uparrow{}0\uparrow{}.\\
\end{array}
$$
\end{example}
We state the following without proof as it follows straightforwardly from the definitions.
\begin{prop}
For $A,B\in \Words$ the inequality $A\leq_\Words B$ holds if and only if $\Embed{\Words}{\Grammar}(A)\leq_\Grammar \Embed{\Words}{\Grammar} (B)$.
\end{prop}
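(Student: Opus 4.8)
The plan is to prove the equivalence by induction on the maximal length of a word occurring in $A\cup B$ (equivalently, on the depth of the encoding trees), after first isolating the combinatorial content of $\leq_\Grammar$ on words in the image of $\Embed{\Words}{\Grammar}$. To this end I would introduce the class $\mathcal T$ of \emph{tree terms}, the words generated by $T\to\,\downarrow TT\uparrow\mid 0\mid 1$. Every $\Embed{\Words}{\Grammar}(A)$ lies in $\mathcal T$, and each of the three rewriting rules sends a tree term to a tree term while acting on a single complete subterm: $1\to\,\downarrow 11\uparrow$ replaces a $1$-leaf by an internal node with two $1$-leaves, $1\to 0$ recolours a $1$-leaf, and $\downarrow 00\uparrow\,\to 0$ collapses an internal node whose two children are $0$-leaves. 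Thus $\Embed{\Words}{\Grammar}(A)$ is exactly the trie of the antichain $A$ carrying a $1$ at each member of $A$ and a $0$ at each dead branch, and $W\leq_\Grammar W'$ means that the tree $W'$ can be rewritten to the tree $W$ by these local moves.

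Next I would record two extremal facts, which dispose of every case in which $A$ or $B$ equals $\emptyset$ or $\{\varepsilon\}$ (the two one-letter terms $0=\Embed{\Words}{\Grammar}(\emptyset)$ and $1=\Embed{\Words}{\Grammar}(\{\varepsilon\})$). By an easy induction on the term, every tree term $X$ satisfies $X\to^*0$ and $1\to^*X$, so $0\leq_\Grammar X\leq_\Grammar 1$. Conversely, because no rewriting step can produce the one-letter word $1$ (the only one-letter outputs are the $0$'s created by $1\to0$ and $\downarrow 00\uparrow\,\to0$) and because a lone $0$ is terminal, one has $X\to^*1$ only for $X=1$ and $0\to^*X$ only for $X=0$. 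Matching these against the evident computations $\emptyset\leq_\Words B$ for all $B$, $\{\varepsilon\}\leq_\Words B\iff B=\{\varepsilon\}$, $A\leq_\Words\{\varepsilon\}$ for all $A$, and $A\leq_\Words\emptyset\iff A=\emptyset$ settles the claim except when $A$ and $B$ are both different from $\emptyset$ and $\{\varepsilon\}$.

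For that main case write $\Embed{\Words}{\Grammar}(A)=\,\downarrow\alpha_0\alpha_1\uparrow$ and $\Embed{\Words}{\Grammar}(B)=\,\downarrow\beta_0\beta_1\uparrow$ with $\alpha_i=\Embed{\Words}{\Grammar}(A_i)$ and $\beta_i=\Embed{\Words}{\Grammar}(B_i)$. On the combinatorial side, since neither $A$ nor $B$ contains $\varepsilon$, any witnessing prefix of a word of $A$ keeps its first letter, so $A\leq_\Words B$ iff $A_0\leq_\Words B_0$ and $A_1\leq_\Words B_1$. The crux is the matching factorisation on the grammar side, namely $\downarrow\alpha_0\alpha_1\uparrow\,\leq_\Grammar\,\downarrow\beta_0\beta_1\uparrow$ iff $\alpha_0\leq_\Grammar\beta_0$ and $\alpha_1\leq_\Grammar\beta_1$. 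Granting it, the induction hypothesis applied to the pairs $(A_0,B_0)$ and $(A_1,B_1)$ closes the proof, in both directions.

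The main obstacle is the $(\Rightarrow)$ direction of this factorisation: I must show that a derivation $\downarrow\beta_0\beta_1\uparrow\to^*\downarrow\alpha_0\alpha_1\uparrow$ splits into independent derivations $\beta_0\to^*\alpha_0$ and $\beta_1\to^*\alpha_1$. The key point is that the outermost bracket pair is never destroyed along the way: the only rule that could remove it is $\downarrow 00\uparrow\,\to 0$, which would first require collapsing the root to a single $0$-leaf, after which $0$ is terminal and can never be re-expanded to the target $\downarrow\alpha_0\alpha_1\uparrow$. Hence throughout the derivation the word keeps the shape $\downarrow(\cdot)(\cdot)\uparrow$; every rule instance (whose left-hand side is always a complete subterm or a lone leaf) therefore lies entirely inside the left or the right top-level subterm and never straddles the split, so restricting the derivation to the two sides gives $\beta_0\to^*\alpha_0$ and $\beta_1\to^*\alpha_1$. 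The $(\Leftarrow)$ direction is immediate by performing the two sub-derivations one after the other inside the fixed outer brackets.
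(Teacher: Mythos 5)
Your proof is correct, and there is in fact nothing in the paper to compare it against: the authors state this proposition explicitly \emph{without proof}, remarking that it ``follows straightforwardly from the definitions.'' Your write-up supplies exactly the details that were omitted, and your decomposition is the natural one: dispose of the cases $A,B\in\{\emptyset,\{\varepsilon\}\}$ via the extremal facts $0\leq_\Grammar X\leq_\Grammar 1$ for every tree term $X$ (together with the observations that no rule produces the one-letter word $1$ and that $0$ is terminal), then induct on the maximal word length using the parallel factorisations of $\leq_\Words$ and $\leq_\Grammar$ across the first letter. You also correctly located the only step with genuine mathematical content: because $\leq_\Grammar$ is defined by unrestricted string rewriting, the implication from $\downarrow\beta_0\beta_1\uparrow\ \to^*\ \downarrow\alpha_0\alpha_1\uparrow$ to the existence of independent derivations $\beta_0\to^*\alpha_0$ and $\beta_1\to^*\alpha_1$ is not purely formal, and your two observations (every rule instance inside a tree term rewrites a complete subterm, and the outer bracket pair can only vanish by collapsing the entire word to the terminal word $0$, which is incompatible with the target) do establish it. If you want the argument fully self-contained, spell out the assertion you left parenthetical: in a tree term, any occurrence of the substring $\downarrow 00\uparrow$ is necessarily a complete subterm, since the subterm opened by that $\downarrow$ has a first child beginning with $0$, hence equal to $0$, then a second child equal to $0$, which forces the closing $\uparrow$ to be the fourth symbol of the occurrence. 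With that one line added, your proof is complete and is precisely the ``straightforward'' argument the paper alludes to but never writes down.
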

$(\Grammar,\leq_\Grammar)$ is a quasi-order. We have:
\begin{corollary}
The quasi-order $(\Grammar,\leq_\Grammar)$ contains a universal partial order.
\end{corollary}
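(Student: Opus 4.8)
The plan is to leverage the order-embedding $\Embed{\Words}{\Grammar}$ whose defining property is precisely the Proposition immediately preceding this Corollary. Concretely, I would take the candidate universal partial order to be the image $\Embed{\Words}{\Grammar}(\Words)\subseteq\Grammar$, equipped with the restriction of $\leq_\Grammar$. The Proposition supplies the biconditional $A\leq_\Words B$ if and only if $\Embed{\Words}{\Grammar}(A)\leq_\Grammar\Embed{\Words}{\Grammar}(B)$, which is exactly the statement that $\Embed{\Words}{\Grammar}$ is a full order-embedding; combined with the earlier Corollary asserting that $(\Words,\leq_\Words)$ is universal, this will do essentially all the work.

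The one point requiring care is that $\leq_\Grammar$ is only a quasi-order on all of $\Grammar$, so I must check that its restriction to the image is a genuine partial order, i.e.\ that antisymmetry holds there. Reflexivity and transitivity are inherited automatically from $\Grammar$. For antisymmetry, suppose $\Embed{\Words}{\Grammar}(A)\leq_\Grammar\Embed{\Words}{\Grammar}(B)$ and $\Embed{\Words}{\Grammar}(B)\leq_\Grammar\Embed{\Words}{\Grammar}(A)$; by the biconditional this forces $A\leq_\Words B$ and $B\leq_\Words A$, and since $(\Words,\leq_\Words)$ is a genuine partial order we obtain $A=B$, hence equal images. The same argument shows $\Embed{\Words}{\Grammar}$ is injective: if the two images coincide, then by reflexivity both $\leq_\Grammar$ relations hold, so $A=B$.

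Having verified injectivity and antisymmetry, I would conclude that $\Embed{\Words}{\Grammar}$ is an isomorphism of ordered sets from $(\Words,\leq_\Words)$ onto $(\Embed{\Words}{\Grammar}(\Words),\leq_\Grammar)$, and that the latter is a partial order. Universality then transfers along this isomorphism: any countable partial order embeds into $(\Words,\leq_\Words)$, and composing with $\Embed{\Words}{\Grammar}$ yields an embedding into the image, hence into $(\Grammar,\leq_\Grammar)$. Therefore $(\Grammar,\leq_\Grammar)$ contains a universal partial order, as claimed.

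The main (though modest) obstacle is the antisymmetry verification, since it is the only place where the quasi-order structure could in principle cause distinct words of the image to be identified and thereby destroy universality; the biconditional of the Proposition resolves this cleanly, so no further combinatorial analysis of the grammar rules is needed beyond what that Proposition already provides.
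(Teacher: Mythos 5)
Your proposal is correct and follows essentially the same route as the paper, which leaves the corollary as an immediate consequence of the preceding Proposition (the biconditional $A\leq_\Words B \Leftrightarrow \Embed{\Words}{\Grammar}(A)\leq_\Grammar\Embed{\Words}{\Grammar}(B)$) together with the universality of $(\Words,\leq_\Words)$. Your explicit verification of antisymmetry and injectivity on the image is exactly the detail the paper leaves implicit, so nothing differs in substance.
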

\subsection{Multicuts and Truncated Vectors}
\label{TVsection}
A universal partially ordered structure similar to $(\Words,\leq_\Words)$, but
less suitable for further embeddings, was studied in
\cite{hedrlin,nesu,HN-trees}. While the structures defined in these papers are easily shown to be
equivalent, their definition and motivations were different.  \cite{hedrlin} contains the first finite presentation of universal partial order. \cite{nesu} first
used the notion of on-line embeddings to (1) prove the universality of the structure and
(2) as intermediate structure to prove the universality of the homomorphism
order of multigraphs. The motivation for this structure came from the  analogy with
Dedekind cuts and thus its members were called {\em multicuts}.  In
\cite{HN-trees} an essentially equivalent structure with the inequality reversed was
used as an intermediate structure for the stronger result showing the universality of
oriented paths. This time the structure arises in the context of orders of vectors
(as the simple extension of the orders of finite dimension represented by finite
vectors of rationals) resulting in name {\em truncated vectors}.

We follow the presentation in \cite{HN-trees}.

\begin{defn}
\label{TVdef}
Let $\vec{v}=(v_1,\ldots,v_t)$, $\vec{v}'=(v'_1,\ldots,v'_{t'})$ be $0$--$1$ vectors.
We put: $$\vec{v}\leq_{\vec{v}}\vec{v}'\hbox{ if and only if }t\geq t'\hbox{ and }v_i\geq v'_i\hbox{ for }i=1,\ldots,t'.$$  
\end{defn}
Thus we have e.g. $(1,0,1,1,1)<_{\vec{v}}(1,0,0,1)$ and $(1,0,0,1)>_{\vec{v}}(1,1,1,1)$.
An example of an infinite descending chain is e.g.
$$(1)>_{\vec{v}}(1,1)>_{\vec{v}}(1,1,1)>_{\vec{v}}\ldots.$$
Any finite partially ordered set is representable by vectors with this
ordering: for vectors of a fixed length we have just the reverse ordering of that used in
the (Dushnik-Miller) dimension of partially ordered sets, see e.g.
\cite{Trotter}.

\begin{defn}
We denote by $\TV$ the class of all finite vector-sets.
Let $\vec{V}$ and $\vec{V}'$ be two finite sets of $0$--$1$ vectors.  
We put $\vec{V}\leq_\TV\vec{V}'$ if and only if for every $\vec{v}\in \vec{V}$ there exists a $\vec{v}'\in\vec{V}'$ such that $\vec{v}\leq_{\vec{v}}\vec{v}'$.
\end{defn}

For a word $W$ on the alphabet $\{0,1\}$ we construct a vector $\vec{v}(W)$
of length $2|W|$ such that $2n$-th element of vector $\vec{v}(W)$ is $0$ if and only if the
$n$-th character of $W$ is 0, and the $(2n+1)$-th element of the vector $\vec{v}(W)$ is $1$
if and only if the $n$-th character of $W$ is 0.

It is easy to see that $W\leq_\Words W'$ if and only if
$\vec{v}(W)\leq_{\vec{v}} \vec{v}(W')$.
The embedding $\Embed{\Words}{\TV}:(\Words,\leq_\Words)\to (\TV,\leq_\TV)$ is
constructed as follows:
$$\Embed{\Words}{\TV}= \{\vec{v}(W), W\in A\}.$$

For our example $([4],\leq_P)$ in Figure~\ref{poset} we have embedding:

$$
\begin{array}{lllll}
\Embed{\Words}{\TV}(\Psi([1],\leq_{P_1}))&=&\Embed{\Words}{\TV}(\{0\})&=&\{(0,1)\},\\
\Embed{\Words}{\TV}(\Psi([2],\leq_{P_2}))&=&\Embed{\Words}{\TV}(\{0,10\})&=&\{(0,1),(1,0,0,1)\},\\
\Embed{\Words}{\TV}(\Psi([3],\leq_{P_3}))&=&\Embed{\Words}{\TV}(\{000,100\})&=&\{(0,1,0,1,0,1),(1,0,0,1,0,1)\},\\
\Embed{\Words}{\TV}(\Psi([4],\leq_{P_4}))&=&\Embed{\Words}{\TV}(\{0000\})&=&\{(0,1,0,1,0,1,0,1)\}.\\
\end{array}
$$

\begin{corollary}
The quasi-order $(\TV,\leq_\TV)$ contains a universal partial order.
\end{corollary}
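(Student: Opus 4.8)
The plan is to present the image of $(\Words,\leq_\Words)$ under the map $\Embed{\Words}{\TV}$ as the universal partial order living inside the quasi-order $(\TV,\leq_\TV)$. Since we already know, from the Corollary to Theorem \ref{Pembed}, that $(\Words,\leq_\Words)$ is universal, it suffices to verify that $\Embed{\Words}{\TV}$ is an order embedding; its image, with the order induced by $\leq_\TV$, is then order-isomorphic to $(\Words,\leq_\Words)$, hence a universal partial order contained in $(\TV,\leq_\TV)$.

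First I would use the single-word fact already recorded before the statement: $W'$ is an initial segment of $W$, i.e.\ $W\leq_w W'$, if and only if $\vec{v}(W)\leq_{\vec{v}}\vec{v}(W')$. The reason the coordinate-pair coding $0\mapsto(0,1)$, $1\mapsto(1,0)$ works is that, under the \emph{reversed} coordinatewise domination defining $\leq_{\vec{v}}$, two complementary pairs are incomparable while equal pairs coincide; so on the first $2|W'|$ coordinates the inequality $\vec{v}(W)\geq\vec{v}(W')$ forces the corresponding bits of $W$ and $W'$ to agree, which is exactly the prefix condition, while the length requirement $2|W|\geq 2|W'|$ matches $|W|\geq|W'|$. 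This coordinatewise check is the one genuinely computational step, but it is routine and is precisely what the paper signals as ``easy to see''.

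Next I would lift this elementwise statement to finite vector-sets, exploiting that $\leq_\Words$ and $\leq_\TV$ have the identical shape: one set lies below another exactly when each of its members is dominated by some member of the other. If $A\leq_\Words B$, then for any $\vec{v}(W)\in\Embed{\Words}{\TV}(A)$ with $W\in A$ I choose $W'\in B$ with $W\leq_w W'$, whence $\vec{v}(W)\leq_{\vec{v}}\vec{v}(W')\in\Embed{\Words}{\TV}(B)$; thus $\Embed{\Words}{\TV}(A)\leq_\TV\Embed{\Words}{\TV}(B)$. Conversely, if $\Embed{\Words}{\TV}(A)\leq_\TV\Embed{\Words}{\TV}(B)$, then each $W\in A$ has some dominating vector in $\Embed{\Words}{\TV}(B)$, which is necessarily of the form $\vec{v}(W')$ for a $W'\in B$, and the single-word fact gives $W\leq_w W'$; hence $A\leq_\Words B$. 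This shows $\Embed{\Words}{\TV}$ both preserves and reflects the order.

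Finally I would assemble the conclusion. Because $\Embed{\Words}{\TV}$ reflects the order and $\leq_\Words$ is antisymmetric, the map is injective, so it is an order isomorphism onto its image. The induced order on the image is therefore isomorphic to $(\Words,\leq_\Words)$; in particular it is antisymmetric, hence a genuine partial order, and it is universal by the Corollary to Theorem \ref{Pembed}. As this partial order is an induced subset of $(\TV,\leq_\TV)$, the quasi-order $(\TV,\leq_\TV)$ contains a universal partial order. The one subtlety worth flagging is that $(\TV,\leq_\TV)$ is itself only a quasi-order---a longer vector that repeats the entries of a shorter one dominates it, creating distinct but mutually comparable sets---which is exactly why the statement asserts containment of a universal partial order rather than universality of the whole quasi-order. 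The main (still routine) obstacle is the coordinatewise verification of the single-word fact; the set-level argument is a purely formal lift.
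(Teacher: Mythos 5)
Your proof is correct and takes essentially the same route as the paper: it uses the pairwise coding $0\mapsto(0,1)$, $1\mapsto(1,0)$ to get the single-word fact $W\leq_w W'\iff \vec{v}(W)\leq_{\vec{v}}\vec{v}(W')$, lifts it to the set-level embedding $\Embed{\Words}{\TV}$, and invokes the universality of $(\Words,\leq_\Words)$. You merely spell out the coordinatewise verification and the order-reflection/injectivity step that the paper leaves as ``easy to see''.
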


The structure $(\TV,\leq_\TV)$ as compared to $(\Words,\leq_\Words)$ is more complicated to use for further
embeddings: the partial order of vectors is already a complex finite-universal
partial order. The reason why the structure $(\TV,\leq_\TV)$ was discovered first is is that it
allows a remarkably simple on-line embedding that we outline now.

Again we restrict ourselves to the partial orders whose vertex sets are the sets
$[n]$ (for some $n>1$) and we will always embed the vertices in the natural order.
The function $\Psi'$ mapping partial orders $([n],\leq_P)$ to elements of
$(\TV,\leq_\TV)$ is defined as follows:

\begin{defn}
Let $\vec{v}({[n],\leq_{P}})=(v_1$, $v_2$, \ldots, $v_{n})$ where $v_m=1$ if and only if $n\leq_\Poset m$, $m\leq n$, otherwise $v_m=0$. 

Let $$\Psi'([n],\leq_P)=\{\vec{v}([m],\leq_{P_m}): m\in P, m\leq n, m\leq_\Poset n\}.$$
\end{defn}

For our example in Figure~\ref{poset} we get a different (and more compact) embedding:
$$
\begin{array}{llllll}
\vec{v}(1)&=&(1), &\Psi'([1],\leq_{P_1})&=&\{(1)\},\\
\vec{v}(2)&=&(0,1), &\Psi'([2],\leq_{P_2})&=&\{(1),(0,1)\},\\
\vec{v}(3)&=&(1,0,1), &\Psi'([3],\leq_{P_3})&=&\{(1,0,1)\},\\
\vec{v}(4)&=&(1,1,1,1), &\Psi'([4],\leq_{P})&=&\{(1,1,1,1)\}.
\end{array}
$$

\begin{thm}
Fix the partial order $([n],\leq_P)$.
 For every $i,j\in[n]$, $$i\leq_P j \hbox{ if and only if } \Psi'([i],\leq_{P_i})\leq_\TV \Psi'([j],\leq_{P_j})$$ and $$\Psi'([i],\leq_{P_i}) = \Psi'([j],\leq_{P_j}) \hbox{ if and only if } i=j.$$ (Or in the other words, the mapping $\Phi'(i) = \Psi'([i],\leq_{P_i})$ is the embedding of $([n],\leq_P)$ into $(\TV,\leq_\TV)$).
\end{thm}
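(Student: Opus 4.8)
The plan is to reduce the entire statement to one clean characterization of when a single generating vector dominates another, and then read off both the order-equivalence and the injectivity almost mechanically, with no induction. Throughout, write $\vec{v}([m],\leq_{P_m})$ for the length-$m$ vector whose $k$-th coordinate is $1$ exactly when $m\leq_P k$ (for $k\leq m$), and recall that by definition $\Psi'([i],\leq_{P_i})$ is precisely the set of these vectors as $m$ ranges over the indices with $m\leq i$ and $m\leq_P i$. The first step I would isolate is the following.

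\emph{Key Lemma:} for all $a,b\in[n]$, we have $\vec{v}([a],\leq_{P_a})\leq_{\vec{v}}\vec{v}([b],\leq_{P_b})$ if and only if $a\geq b$ and $a\leq_P b$. The proof is short and is where the definitions do all the work. For the forward direction, the length requirement in the definition of $\leq_{\vec{v}}$ forces $a\geq b$; then evaluating the coordinatewise inequality at $k=b$ (legal since $b\leq a$), the $b$-th coordinate of $\vec{v}([b],\leq_{P_b})$ equals $1$ (as $b\leq_P b$) and is dominated by the $b$-th coordinate of $\vec{v}([a],\leq_{P_a})$, which is therefore also $1$, i.e. $a\leq_P b$. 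For the reverse direction, given $a\geq b$ and $a\leq_P b$, every $k\leq b$ with $b\leq_P k$ satisfies $a\leq_P b\leq_P k$ by transitivity, so the $k$-th coordinate of $\vec{v}([a],\leq_{P_a})$ is $1$ and dominates that of $\vec{v}([b],\leq_{P_b})$.

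With the lemma in hand I would treat the two implications of the order-equivalence separately. For ``$i\leq_P j$ implies $\Psi'([i],\leq_{P_i})\leq_\TV\Psi'([j],\leq_{P_j})$'', take any generator $\vec{v}([m],\leq_{P_m})\in\Psi'([i],\leq_{P_i})$, so $m\leq i$ and $m\leq_P i$, whence $m\leq_P j$ by transitivity. I then pick the witness index $m'=\min(m,j)$: it satisfies $m'\leq j$ and $m'\leq_P j$ (so $\vec{v}([m'],\leq_{P_{m'}})\in\Psi'([j],\leq_{P_j})$), while $m\geq m'$ and $m\leq_P m'$, so the Key Lemma gives $\vec{v}([m],\leq_{P_m})\leq_{\vec{v}}\vec{v}([m'],\leq_{P_{m'}})$. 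For the converse, observe $\vec{v}([i],\leq_{P_i})\in\Psi'([i],\leq_{P_i})$ since $i\leq_P i$; a dominating generator $\vec{v}([m'],\leq_{P_{m'}})\in\Psi'([j],\leq_{P_j})$ then supplies $i\leq_P m'$ via the Key Lemma, and $m'\leq_P j$ holds by membership, so $i\leq_P j$ by transitivity.

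The one genuinely delicate point, and the place where the argument might look as though it fails, is the witness choice in the first implication when the poset order disagrees with the index order, i.e. when $i\leq_P j$ yet $i>j$ as integers (this really occurs, since an element revealed later can sit below an earlier one). Then a generator $\vec{v}([m],\leq_{P_m})$ with $m>j$ cannot be dominated by itself inside $\Psi'([j],\leq_{P_j})$, because no generator of $\Psi'([j],\leq_{P_j})$ has length exceeding $j$; it is exactly here that truncation is essential and forces passing to the shorter witness $\vec{v}([j],\leq_{P_j})$, which works because $m\geq j$ and $m\leq_P j$. Once this is handled, injectivity is immediate: if $\Psi'([i],\leq_{P_i})=\Psi'([j],\leq_{P_j})$ then each set is $\leq_\TV$ the other, so $i\leq_P j$ and $j\leq_P i$, whence $i=j$ by antisymmetry, while the reverse implication is trivial. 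This shows that $\Phi'(i)=\Psi'([i],\leq_{P_i})$ is an embedding.
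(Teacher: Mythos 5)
Your proof is correct, but it takes a genuinely different route from the paper: the paper in fact gives no self-contained argument here at all, remarking only that the proof ``can be done via induction analogously as in the second part of the proof of Theorem~\ref{Pembed}'' and deferring the details to \cite{HN-trees}. That intended argument is an induction on $n$, checking as each new element is revealed that $\Psi'([n],\leq_P)\leq_\TV\Psi'([m],\leq_{P_m})$ holds exactly when $n\leq_P m$ and symmetrically, in keeping with the on-line embedding framework of Section~2. You instead avoid induction entirely by isolating the characterization $\vec{v}([a],\leq_{P_a})\leq_{\vec{v}}\vec{v}([b],\leq_{P_b})$ if and only if $a\geq b$ and $a\leq_P b$, after which everything is bookkeeping. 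Your Key Lemma is right on both directions (the length clause forces $a\geq b$, and evaluating at coordinate $b$ with $b\leq_P b$ forces $a\leq_P b$; transitivity gives the converse), your witness $m'=\min(m,j)$ is exactly what is needed --- in particular you correctly identify and dispose of the delicate case $i\leq_P j$ with $m>j$, where no generator of $\Psi'([j],\leq_{P_j})$ is long enough and truncation forces the witness $\vec{v}([j],\leq_{P_j})$ --- and the converse direction via the generator $\vec{v}([i],\leq_{P_i})$ plus antisymmetry is sound. What your route buys is a short, complete, self-contained proof that makes visible exactly where each clause in the definition of $\leq_{\vec{v}}$ is used; what the paper's route buys is uniformity with the proof of Theorem~\ref{Pembed} and with the one-element-at-a-time (on-line) viewpoint stressed throughout the paper, at the cost of leaving the details to another reference.
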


The proof can  be done via induction analogously as in the second part of the proof of
Theorem \ref{Pembed}. See our paper \cite{HN-trees}.  The main advantage of this embedding is
that the size of the answer is $O(n^2)$ instead of $O(2^n)$. 

\subsection{Periodic sets}
\label{preiodickesection}
As the last finite presentation we mention the following what we believe to
be very elegant description.
Consider the partial order defined by inclusion on sets of integers. This
partial order is uncountable and contains every countable partial order. We
can however show the perhaps surprising fact that the subset of all periodic subsets (which has a very simple and finite description) is countably universal.
\begin{defn}
$S\subseteq \Z$ is {\em $p$-periodic} if for every $x\in S$ we have also $x+p\in S$ and $x-p\in S$.

For a periodic set $S$ with period $p$ denote by the {\em signature $s(p,S)$} a word over the alphabet $\{0,1\}$ of length $p$ such that $n$-th letter is $1$ if and only if $n\in S$.

By $\Periodic$ we denote the class of all sets $S\subseteq \Z$ such that $S$ is
$2^n$-periodic for some $n$.
\end{defn}

Clearly every periodic set is determined by its signature and thus
$(\Periodic,\subseteq)$ is a finite presentation. We consider the ordering
of periodic sets by inclusion and prove:

\begin{thm}
The partial order $(\Periodic,\subseteq)$ is universal.
\end{thm}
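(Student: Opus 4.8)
The plan is to follow the strategy used throughout this section: exhibit an order-embedding $\Embed{\Words}{\Periodic}$ of the master order $(\Words,\leq_\Words)$ into $(\Periodic,\subseteq)$. Since $(\Words,\leq_\Words)$ is universal (the corollary established above), universality of $(\Periodic,\subseteq)$ follows by composition. The natural first attempt is to read a word $W=w_1\cdots w_t$ as a residue and send it to the $2^t$-periodic set $\{x\in\Z : x\equiv\sum_i w_i2^{i-1}\pmod{2^t}\}$, extending to $A\in\Words$ by taking unions. Under this map a single residue class is contained in another exactly when the shorter word is a prefix of the longer, which is precisely $\leq_w$, and distinct words of an antichain give disjoint classes. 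The obstacle — exactly the reason ternary rather than binary intervals were needed — is $\emph{exact tiling}$: the two children $W0,W1$ partition the class of $W$, so $\{0\}$ and $\{00,01\}$ would receive the same periodic set, collapsing the gap between them and destroying injectivity.

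To repair this I would spread each letter over a two-bit block, encoding $0\mapsto 00$ and $1\mapsto 01$ (least significant bit first), so that $W$ is sent to $S_W=\{x : x\equiv r(W)\pmod{2^{2t}}\}$ with $r(W)=\sum_{i=1}^{t}w_i2^{2i-1}$, and $\Embed{\Words}{\Periodic}(A)=\bigcup_{W\in A}S_W$. The odd-indexed bit of each block is forced to $0$, so no genuine code ever occupies those positions. Consequently the residue $e_W=r(W)+2^{2t}$ lies in $S_W$, yet lies in $S_{W'}$ for no proper descendant $W'$ of $W$ (their codes keep that bit $0$) and for no word incomparable to $W$ (disjoint classes). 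Thus $e_W$ is a private witness with $e_W\in S_{W'}$ if and only if $W'$ is a prefix of $W$. Producing this reserved-bit device is the crux of the argument and the only genuinely non-formal step.

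With the encoding fixed, the verification mirrors the earlier lemmas. First one checks that $S_W\subseteq S_{W'}$ if and only if $W'$ is a prefix of $W$, using block-alignment of codes; this yields the forward direction $A\leq_\Words B\Rightarrow\Embed{\Words}{\Periodic}(A)\subseteq\Embed{\Words}{\Periodic}(B)$. Conversely, if $\Embed{\Words}{\Periodic}(A)\subseteq\Embed{\Words}{\Periodic}(B)$, then for each $W\in A$ the private element $e_W$ must be covered by some $S_{W'}$ with $W'\in B$, which forces $W'$ to be a prefix of $W$; as this holds for every $W\in A$ we obtain $A\leq_\Words B$. Finally, since $A$ is finite, $\Embed{\Words}{\Periodic}(A)$ is $2^{N}$-periodic for $N=\max_{W\in A}2|W|$, so the image indeed lies in $\Periodic$.

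I expect the main difficulty to be isolating and justifying the reserved-bit (block) encoding: one must argue that the forced $0$ bit is never overwritten by any descendant, so that the half-class containing $e_W$ is untouched by refinement. Once that is in place, everything reduces to routine bookkeeping about divisibility by powers of two, exactly parallel to the ternary trick already used for $(\Intervals,\leq_\Intervals)$.
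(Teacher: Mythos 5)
Your proof is correct, and the comparison with the paper is instructive: the paper's own proof is exactly the ``natural first attempt'' that you describe and then reject. The paper maps $A\in\Words$ to the set of integers $n$ such that, for some $W\in A$, the $|W|$ low-order binary digits of $n$ (least significant first) spell $W$, and then asserts as ``easy to see'' that $\Embed{\Words}{\Periodic}(A)\subseteq\Embed{\Words}{\Periodic}(A')$ if and only if $A\leq_\Words A'$. Your tiling objection is a genuine counterexample to that assertion: under this one-bit-per-letter encoding the classes of the two children satisfy $S_{W0}\cup S_{W1}=S_W$, so $\{0\}$ and $\{00,01\}$ (precisely the gap in $(\Words,\leq_\Words)$ that the paper itself exhibits) are both sent to the even integers; the map is therefore not injective, and the implication from $\subseteq$ to $\leq_\Words$ fails. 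Thus your reserved-bit block code $0\mapsto 00$, $1\mapsto 01$, together with the private witness $e_W=r(W)+2^{2t}$, is not a cosmetic variant but the step that makes the statement you prove --- that the map is an embedding of \emph{all} of $(\Words,\leq_\Words)$ into $(\Periodic,\subseteq)$ --- actually true; the rest of your verification (prefix characterization of $S_W\subseteq S_{W'}$, the covering argument for $e_W$, and $2^N$-periodicity of finite unions) is sound, so universality follows. It is worth recording that the paper's encoding can also be repaired without changing it: every word produced by the construction $\Psi$ of Theorem~\ref{Pembed} ends in the letter $0$, and on antichains of words ending in $0$ exact tiling cannot occur (covering $S_W$ by descendants would require both children $V0$ and $V1$ of some vertex, and no word ending in $1$ is available), so the unmodified map is an embedding of a universal suborder of $(\Words,\leq_\Words)$; but this restriction appears nowhere in the paper's argument. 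In short, your route spends two bits per letter and in exchange proves the cleaner, fully correct statement, whereas the paper's proof, as written, has a real gap.
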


\begin{proof}
We embed $(\Words,\leq_\Words)$ into $(\Periodic,\subseteq)$ as follows:
For $A\in \Words$ denote by $\Embed\Words{\Periodic}(A)$ the set of integers such that $n\in \Embed\Words{\Periodic}(A)$ if and only if there is $W\in A$ and the $|A|$ least significant digits of the binary expansion of $n$ forms a reversed word $W$ (when the binary expansion has fewer than $|W|$ digits, add 0 as needed).

It is easy to see that $\Embed\Words{\Periodic}(A)$ is $2^n$-periodic, where $n$ is the length of
longest word in $W$, and $\Embed\Words{\Periodic}(A)\subseteq \Embed\Words{\Periodic}(A')$ if and only if $A\leq_\Words A'$.
\end{proof}

$(\Periodic,\subseteq)$ is dense, but it fails to have the $3$-extension
property: there is no set strictly smaller than the set with signature $01$ and
greater than both sets with signatures $0100$ and $0010$.

\section{Generic Poset and Conway numbers}
One of the striking (and concise) incarnations of the generic Rado graph is provided by the set theory: vertices of $\Rado$ are all sets in a fixed countable model $\Model$ of the theory of finite sets, and the edges correspond to pairs $\{A,B\}$ for which either $A\in B$ or $B\in A$. In \cite{HN-Posets} we aimed for a similarly concise representation of a generic partial order. That appeared to be a difficult task and we had to settle for the weaker notion of ``finite presentation''. 
At present \cite{HN-Posets} is the only finite presentation of the generic partial order.
This is related to Conway surreal numbers \cite{conway,Conway}.

  In this section, for completeness we give the finite presentation of the generic partial order as shown in \cite{HN-Posets}.  This construction is of
independent interest as one can give a finite presentation of the rational
Urysohn space along the same lines \cite{metric}.
We work in a fixed
countable model $\Model$ of the theory of finite sets extended by a single atomic
set $\apple$.  To represent ordered pairs $(M_L,M_R)$, we use following notation:
  $$M_L=\{A;A\in M,\apple\notin A\};$$
  $$M_R=\{A;(A\cup{\{\apple\}})\in M,\apple\notin A\}.$$

\begin{defn}
  \label{Hdef}
  Define the partially ordered set $(\He,\leq_\in)$ as follows:
  
  The elements of $\He$ are all sets $M$
  with the following properties:
   \begin{enumerate}
     \item(correctness)
	\begin{enumerate}
	\item $\apple\notin M$,
	\item $M_L\cup M_R\subset \He$,
	\item $M_L\cap M_R = \emptyset$.
	\end{enumerate}
     \item(ordering property) $(\{A\}\cup A_R)\cap(\{B\}\cup B_L)\neq \emptyset$ for each $A\in M_L, B\in M_R$,
     \item(left completeness) $A_L\subseteq M_L$ for each $A\in M_L$,
     \item(right completeness) $B_R\subseteq M_R$ for each $B\in M_R$.
   \end{enumerate}
  The relation of $\He$ is denoted by $\leq_\in$ and it is defined as follows:  We put $M<_\in N$ if
  $$(\{M\}\cup M_R)\cap (\{N\}\cup N_L)\neq \emptyset.$$
  We write $M\leq_\in  N$ if either $M<_\in N$ or $M=N$.
\end{defn}

The class $\He$ is non-empty  (as  $M=\emptyset=
\HE 0,\emptyset,\emptyset \in \He$). (Obviously the correctness property
holds. Since $M_L=\emptyset$, $M_R=\emptyset$, the ordering property and
completeness properties follow trivially.)

Here are a few examples of non-empty elements of the structure $\He$:
$$
  \begin{array}{c}
    \TA,\\
    \TB,\\
    \TC.\\
  \end{array}
$$

It is a non-trivial fact that $(\He,\leq_\in)$ is a partially ordered set.  This will be proved after
introducing some auxiliary notions:

\begin{defn}

  Any element $W\in (A\cup A_R)\cap(B\cup B_L)$ is called a {\em witness}
of the inequality $A<_\in B$.
\end{defn}
\begin{defn}

  The {\em level of $A\in \He$} is defined as follows:
\begin{eqnarray}
l(\emptyset)&=&0,\nonumber \\
l(A)&=&max(l(B):B\in A_L\cup A_R) + 1 \hbox{ for } A\neq \emptyset.\nonumber
\end{eqnarray}
\end{defn}

We observe the following facts (which follow directly from the
definition of $\He$):
\begin{fact}
 \label{lpmnoziny}
 $X <_\in A <_\in Y$ for every $A\in \He$, $X\in A_L$ and $Y\in A_R$.
\end{fact}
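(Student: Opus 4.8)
The plan is to read off both inequalities directly from the definition of $<_\in$, producing an explicit witness in each case. Recall that, by definition, $M<_\in N$ holds exactly when the intersection $(\{M\}\cup M_R)\cap(\{N\}\cup N_L)$ is nonempty, and that any member of this intersection is a witness of $M<_\in N$. So it suffices, for each of the two claimed inequalities, to point to a single set lying in the relevant intersection; no global structure of $\He$ needs to be invoked.

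First I would treat $X<_\in A$ for $X\in A_L$. The natural candidate for a witness is $X$ itself: on the one hand $X\in\{X\}\subseteq\{X\}\cup X_R$, and on the other hand the hypothesis $X\in A_L$ gives $X\in\{A\}\cup A_L$. Hence $X$ lies in $(\{X\}\cup X_R)\cap(\{A\}\cup A_L)$, this intersection is nonempty, and $X<_\in A$ follows. The inequality $A<_\in Y$ for $Y\in A_R$ is entirely dual, with $Y$ playing the role of the witness: $Y\in A_R\subseteq\{A\}\cup A_R$ and $Y\in\{Y\}\subseteq\{Y\}\cup Y_L$, so $Y\in(\{A\}\cup A_R)\cap(\{Y\}\cup Y_L)$, and therefore $A<_\in Y$.

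I do not expect any genuine obstacle here. The entire content of the fact is the trivial observation that a left element $X$ of $A$ sits simultaneously in the singleton $\{X\}$ and in $A_L$, and dually for a right element $Y$; this is exactly the sense in which the statement ``follows directly from the definition of $\He$''. Notably, neither the ordering property nor the two completeness properties of $\He$ enter the argument, and the correctness clauses are used only insofar as they guarantee $X,Y\in\He$ so that the notation $X_R$ and $Y_L$ is meaningful. This elementary fact is the base observation on which the later, more delicate comparisons of elements of $\He$ (and ultimately the verification that $(\He,\leq_\in)$ is a partial order) will rest.
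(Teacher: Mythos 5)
Your proof is correct and is exactly the direct verification the paper intends: the paper states this fact without proof as one that ``follows directly from the definition of $\He$'', and your explicit witnesses ($X$ itself for $X<_\in A$, and $Y$ itself for $A<_\in Y$) are precisely how that reading goes. Your closing remark that only correctness clause (b) is needed (to ensure $X,Y\in\He$) is also accurate.
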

\begin{fact}
\label{fact2}
$A\leq_\in W^{AB}\leq_\in B$ for any $A<_\in B$ and witness $W^{AB}$ of $A<_\in B$.
\end{fact}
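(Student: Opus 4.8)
The plan is to prove this directly from the definition of a witness together with Fact~\ref{lpmnoziny}, by a short case analysis on what the witness $W^{AB}$ is.

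By definition, a witness $W^{AB}$ of the inequality $A<_\in B$ lies in $(\{A\}\cup A_R)\cap(\{B\}\cup B_L)$; in particular $W^{AB}\in\{A\}\cup A_R$ and $W^{AB}\in\{B\}\cup B_L$. First I would establish the left inequality $A\leq_\in W^{AB}$ by distinguishing two cases: if $W^{AB}=A$ then the inequality holds as an equality, whereas if $W^{AB}\in A_R$ then Fact~\ref{lpmnoziny}, applied to $A$ with $Y=W^{AB}$, gives $A<_\in W^{AB}$. In either case $A\leq_\in W^{AB}$.

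The right inequality $W^{AB}\leq_\in B$ is handled symmetrically using the membership $W^{AB}\in\{B\}\cup B_L$: if $W^{AB}=B$ the inequality is again an equality, while if $W^{AB}\in B_L$ then Fact~\ref{lpmnoziny}, applied to $B$ with $X=W^{AB}$, yields $W^{AB}<_\in B$. Chaining the two inequalities then gives $A\leq_\in W^{AB}\leq_\in B$, as required.

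I do not anticipate any genuine obstacle here: once the membership defining a witness is unfolded, the claim follows immediately from Fact~\ref{lpmnoziny}, and no appeal to the ordering or completeness properties of $\He$ is needed. The only thing requiring a little care is the bookkeeping of the four cases ($W^{AB}$ being equal to, or lying strictly between, each of $A$ and $B$) and matching each case to the appropriate half of Fact~\ref{lpmnoziny}; it is worth noting in passing that the membership set defining a witness must be read as $(\{A\}\cup A_R)\cap(\{B\}\cup B_L)$ (consistently with the definition of $<_\in$), since the literal reading $(A\cup A_R)\cap(B\cup B_L)$ would mix in elements of $A_L$ and point the inequality the wrong way.
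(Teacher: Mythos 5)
Your proof is correct and is essentially the paper's own argument: the paper states Fact~\ref{fact2} without proof as following directly from the definition of $\He$, and your two-case unfolding of the witness membership combined with Fact~\ref{lpmnoziny} is exactly that direct verification. Your parenthetical remark is also right that the witness set must be read as $(\{A\}\cup A_R)\cap(\{B\}\cup B_L)$, consistent with the definition of $<_\in$, rather than the literal $(A\cup A_R)\cap(B\cup B_L)$ appearing in the paper's definition.
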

\begin{fact}
\label{level}
Let $A<_\in B$ and let $W^{AB}$ be a witness of $A<_\in B$. Then
$l(W^{AB})\leq \min(l(A), l(B))$, and either $l(W^{AB})<l(A)$ or $l(W^{AB})<l(B)$.
\end{fact}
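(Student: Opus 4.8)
The plan is to reduce the whole statement to a single monotonicity property of the level function and then do a short case analysis on where the witness sits. The one elementary observation I would isolate first is this: for any nonempty $X\in\He$ and any $Y\in X_L\cup X_R$, the defining equation $l(X)=\max(l(Y):Y\in X_L\cup X_R)+1$ gives at once $l(Y)<l(X)$. Everything below is an application of this remark.

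Next I would unfold the definition of a witness. By definition $W^{AB}\in(\{A\}\cup A_R)\cap(\{B\}\cup B_L)$, so lying in the first factor forces either $W^{AB}=A$ or $W^{AB}\in A_R$, and lying in the second factor forces either $W^{AB}=B$ or $W^{AB}\in B_L$. If $W^{AB}=A$ then $l(W^{AB})=l(A)$, while if $W^{AB}\in A_R\subseteq A_L\cup A_R$ the monotonicity observation yields $l(W^{AB})<l(A)$ (note $A\neq\emptyset$ here, since $A_R\neq\emptyset$); in either case $l(W^{AB})\leq l(A)$. The symmetric analysis of the second factor gives $l(W^{AB})\leq l(B)$. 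Combining the two bounds proves the first assertion, $l(W^{AB})\leq\min(l(A),l(B))$.

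For the strict part I would inspect the four combinations of the two dichotomies above. The combination ``$W^{AB}=A$ and $W^{AB}=B$'' is impossible: since $A<_\in B$ is strict we have $A\neq B$, so the witness cannot coincide with both. This is precisely the single place where the hypothesis $A<_\in B$ is used. In each of the three surviving combinations at least one of the memberships $W^{AB}\in A_R$ or $W^{AB}\in B_L$ holds, and the monotonicity observation then delivers $l(W^{AB})<l(A)$ or $l(W^{AB})<l(B)$, which is exactly the required ``either\ldots or'' conclusion.

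I do not anticipate a genuine obstacle: the statement is essentially bookkeeping against the definition of level, and the case analysis is four lines long. The only point deserving explicit care is the strictness of $A<_\in B$, which is what rules out the degenerate case $W^{AB}=A=B$ in which the level would fail to drop strictly; I would state this explicitly so that the second clause of the fact is not silently broken in the equality case.
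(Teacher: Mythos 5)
Your proof is correct and is essentially the argument the paper intends: Fact~3 is stated there without proof as one of the facts that ``follow directly from the definition of $\He$'', and your unfolding of the witness condition $W^{AB}\in(\{A\}\cup A_R)\cap(\{B\}\cup B_L)$ against the level recursion is exactly that direct verification. You were also right to isolate the strictness point: $A<_\in B$ must be read as implying $A\neq B$ (the paper uses $<_\in$ as a strict relation, with $\leq_\in$ its reflexive closure), and this is precisely what excludes the degenerate case $W^{AB}=A=B$, without which the second clause of the fact would be false.
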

First we prove transitivity.
  \begin{lem}
    \label{ltr}
    The relation $\leq_\in$ is transitive on the class $\He$.
  \end{lem}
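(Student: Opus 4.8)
The plan is to prove transitivity directly in its strict form: it suffices to show that $A<_\in B$ and $B<_\in C$ imply $A<_\in C$, since $\leq_\in$ is by definition $<_\in$ together with equality and the cases $A=B$ or $B=C$ are immediate. So I would fix $A,B,C\in\He$ together with a witness $W_1$ of $A<_\in B$ and a witness $W_2$ of $B<_\in C$. By the definition of a witness this records the four memberships $W_1\in\{A\}\cup A_R$, $W_1\in\{B\}\cup B_L$, $W_2\in\{B\}\cup B_R$ and $W_2\in\{C\}\cup C_L$, and the goal becomes to exhibit a single element lying in $(\{A\}\cup A_R)\cap(\{C\}\cup C_L)$, which is exactly a witness of $A<_\in C$.

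The engine of the argument is a closure observation extracted from the completeness axioms. First I would record that right completeness gives: if $X\in\{A\}\cup A_R$ then $\{X\}\cup X_R\subseteq\{A\}\cup A_R$ (trivial when $X=A$; and if $X\in A_R$ then $X_R\subseteq A_R$ by right completeness while $X\in A_R$, so the whole right-cone of $X$ stays inside $A_R$). Dually, left completeness gives: if $Y\in\{C\}\cup C_L$ then $\{Y\}\cup Y_L\subseteq\{C\}\cup C_L$. Informally $\{A\}\cup A_R$ is closed under passing to the right-cone of any of its members, and $\{C\}\cup C_L$ under left-cones; these are precisely the ``one-directional transitivity'' statements built into the definition of $\He$.

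With this in hand I would split on whether $B$ itself is one of the two witnesses. If $W_1=B$, then $B\in\{A\}\cup A_R$, so right-closure yields $\{B\}\cup B_R\subseteq\{A\}\cup A_R$; since $W_2\in\{B\}\cup B_R$ and $W_2\in\{C\}\cup C_L$, the element $W_2$ is already the desired witness of $A<_\in C$. The case $W_2=B$ is symmetric, using left-closure to push $W_1$ into $\{C\}\cup C_L$. The main case is $W_1\neq B$ and $W_2\neq B$: then necessarily $W_1\in B_L$ and $W_2\in B_R$, so the ordering property of $B$ produces a common element $W\in(\{W_1\}\cup (W_1)_R)\cap(\{W_2\}\cup (W_2)_L)$, i.e. a witness of $W_1<_\in W_2$. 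Applying the two closure observations relocates it: since $W_1\in\{A\}\cup A_R$ and $W\in\{W_1\}\cup (W_1)_R$, right-closure gives $W\in\{A\}\cup A_R$; since $W_2\in\{C\}\cup C_L$ and $W\in\{W_2\}\cup (W_2)_L$, left-closure gives $W\in\{C\}\cup C_L$. Hence $W$ witnesses $A<_\in C$.

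The step that I expect to need the most care is this interior case, where one must simultaneously invoke the ordering property (to manufacture an element between $W_1$ and $W_2$) and both completeness axioms (to carry that element into the right-cone of $A$ and the left-cone of $C$); the only genuine bookkeeping is tracking which of the four membership alternatives is active in each sub-branch. I would stress that this route is purely axiomatic and needs no induction; the level function and Fact~\ref{level} are not required here, although they do support an alternative inductive proof for readers who prefer to avoid stating the closure observation explicitly.
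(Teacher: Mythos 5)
Your proof is correct and follows essentially the same route as the paper's: both arguments take witnesses of $A<_\in B$ and $B<_\in C$, invoke the ordering property of $B$ to produce a common element in the main case where the witnesses lie in $B_L$ and $B_R$, and use left/right completeness to relocate that element into $(\{A\}\cup A_R)\cap(\{C\}\cup C_L)$. Your explicitly stated closure observation is exactly the completeness step the paper performs inline (its cases $W^{AC}=W^{AB}$ or $W^{AC}\in W^{AB}_R$), so the difference is purely organizational, not mathematical.
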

  \begin{proof}
   Assume that three elements $A,B,C$ of $\He$ satisfy $A<_\in B<_\in C$. We prove that $A<_\in C$
   holds.  Let $W^{AB}$ and $W^{BC}$ be witnesses of the inequalities $A<_\in B$
   and $B<_\in C$ respectively. First we prove that $W^{AB}\leq_\in W^{BC}$.  We distinguish four cases (depending on the definition of the witness):
   \begin{enumerate}
      \item $W^{AB}\in B_{L}$ and $W^{BC}\in B_{R}$.
      
      In this case it follows from Fact \ref{lpmnoziny} that $W^{AB}<_\in W^{BC}$.
      \item $W^{AB}=B$ and $W^{BC}\in B_{R}$. 
      
      Then $W^{BC}$ is a witness of the inequality $B<_\in W^{BC}$ and thus $W^{AB}<_\in W^{BC}$.
      \item $W^{AB}\in B_{L}$ and $W^{BC} = B$.
      
      The inequality $W^{AB}\leq_\in W^{BC}$ follows analogously to the previous case.
      \item $W^{AB}=W^{BC}=B$ (and thus $W^{AB}\leq_\in W^{BC}$).
   \end{enumerate}
   In the last case  $B$ is a witness of the inequality $A<_\in C$.
   Thus we may assume that $W^{AB}\neq_\in W^{BC}$.  Let $W^{AC}$ be a witness of
   the inequality $W^{AB}<_\in W^{BC}$.  Finally we prove that $W^{AC}$ is a witness
   of the inequality $A<_\in C$. We distinguish three possibilities:
   \begin{enumerate}
      \item $W^{AC}=W^{AB}=A$. 
      \item $W^{AC}=W^{AB}$ and $W^{AC}\in A_R$.  
      \item $W^{AC}\in W^{AB}_R$, then also $W^{AC}\in A_R$ from the completeness property.
   \end{enumerate}
   It follows that either $W^{AC}=A$ or $W^{AC}\in A_R$.  Analogously either
   $W^{AC}=C$ or $W^{AC}\in C_L$ and thus $W^{AC}$ is the witness of inequality
   $A<_\in C$.
  \end{proof}
  \begin{lem}
    \label{lasy}
    The relation $<_\in$ is strongly antisymmetric on the class $\He$.
  \end{lem}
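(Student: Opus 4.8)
The plan is to establish \emph{strong antisymmetry} in the following form: if $A <_\in B$ and $B <_\in A$ both hold, then $A = B$. Since $<_\in$ is reflexive and, by Lemma \ref{ltr}, transitive, this is exactly what upgrades $(\He,\leq_\in)$ to a genuine partial order. I would argue by strong induction on the quantity $l(A)+l(B)$, where $l$ is the level function; the base case $l(A)+l(B)=0$ forces $A=B=\emptyset$, so there is nothing to prove.

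For the induction step, fix a witness $W^{AB}$ of $A<_\in B$ and a witness $W^{BA}$ of $B<_\in A$. By the definition of the relation, $W^{AB}\in(\{A\}\cup A_R)\cap(\{B\}\cup B_L)$, so $W^{AB}$ is either $A$, or $B$, or an element of $A_R\cap B_L$; symmetrically $W^{BA}\in(\{B\}\cup B_R)\cap(\{A\}\cup A_L)$ is either $B$, $A$, or an element of $B_R\cap A_L$. If either witness simultaneously equals both of its relevant endpoints we obtain $A=B$ at once. Otherwise I would run through the remaining combinations of witness types and show that each is impossible, so that $A=B$ is the only survivor.

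The routine combinations are dispatched directly using the structural conditions in the definition of $\He$. Whenever the two witnesses force $A\in B_L$ together with $B\in A_L$ (or $B\in A_R$ together with $A\in B_R$), the inequalities implicit in the definition of $l$ clash, since $A\in B_L$ gives $l(A)<l(B)$ while $B\in A_L$ gives $l(B)<l(A)$. Whenever they force a single element into both $A_L$ and $A_R$ (or both $B_L$ and $B_R$) we contradict the disjointness condition $A_L\cap A_R=\emptyset$. The remaining mixed cases close using left/right completeness: for instance $A\in B_L$ gives $A_L\subseteq B_L$, which pushes a witness lying in $A_L$ into $B_L\cap B_R=\emptyset$, and dually $A\in B_R$ gives $A_R\subseteq B_R$; Fact \ref{level} is convenient for keeping the level bookkeeping in these steps.

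The one combination that genuinely uses the induction hypothesis is $W^{AB}\in A_R\cap B_L$ together with $W^{BA}\in A_L\cap B_R$. Here Fact \ref{lpmnoziny} applied inside $A$ gives $W^{BA}<_\in A<_\in W^{AB}$, and applied inside $B$ gives $W^{AB}<_\in B<_\in W^{BA}$, so by transitivity (Lemma \ref{ltr}) we get both $W^{AB}<_\in W^{BA}$ and $W^{BA}<_\in W^{AB}$. Since $W^{AB}\in A_R\cap B_L$ and $W^{BA}\in A_L\cap B_R$, both witnesses have level strictly below $\min(l(A),l(B))$, so $l(W^{AB})+l(W^{BA})<l(A)+l(B)$ and the induction hypothesis applies, yielding $W^{AB}=W^{BA}$. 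But then this common element lies in $A_L\cap A_R=\emptyset$, a contradiction. Hence $A=B$ is the only possibility. I expect this last case to be the crux, as it is the only place where the recursive structure of $\He$, rather than a one-step disjointness or level argument, is essential.
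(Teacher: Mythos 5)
Your proof is correct, and its skeleton is the same as the paper's: strong induction on $l(A)+l(B)$, a case analysis over where the two witnesses $W^{AB}$, $W^{BA}$ can sit, and a descent to the witness pair as the crux. The interesting difference is in how the cases are partitioned and which axioms carry the load. The paper first applies Fact \ref{fact2} and transitivity to get $W^{AB}\leq_\in W^{BA}\leq_\in W^{AB}$ once and for all; this lets the minimal-counterexample descent absorb \emph{every} case in which the witnesses are distinct and at least one of them is proper (i.e.\ lies in $A_R\cap B_L$ or $B_R\cap A_L$), using only the weak bound $l(W^{BA})\leq\min(l(A),l(B))$ from Fact \ref{level}. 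That leaves the paper with just four cases and, notably, no use of the left/right completeness axioms in this lemma. You instead reserve the induction hypothesis for the single proper--proper combination (where you re-derive the two-way inequalities from Fact \ref{lpmnoziny} plus Lemma \ref{ltr}) and kill all mixed endpoint--proper combinations structurally, by using left/right completeness to push a witness into $A_L\cap A_R$ or $B_L\cap B_R$ and contradict disjointness. The cost is a longer enumeration (nine combinations rather than four); the gain is that each non-inductive case is resolved by a one-step axiom application, which makes visible exactly where the completeness conditions of Definition \ref{Hdef} matter --- something the paper's proof of this particular lemma hides (it only exploits completeness inside the proof of transitivity). Both arguments are sound; the paper's is the more economical, yours the more self-contained case by case.
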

  \begin{proof}
    Assume that $A<_\in B<_\in A$ is a counterexample with minimal $l(A)+l(B)$.
    Let $W^{AB}$ be a witness of the inequality $A<_\in B$ and $W^{BA}$ a witness of
    the reverse inequality.  From Fact \ref{fact2} it follows that $A\leq_\in W^{AB}\leq_\in B\leq_\in W^{BA}\leq_\in A \leq_\in W^{AB}$.
    From the transitivity we know that $W^{AB}\leq_\in W^{BA}$ and $W^{BA}\leq_\in W^{AB}$.

    Again  we  consider 4 possible cases:
    \begin{enumerate}
      \item $W^{AB}=W^{BA}$.
      
      From the disjointness of the sets $A_L$ and $A_R$ it follows that $W^{AB}=W^{BA}=A$.  Analogously we obtain
      $W^{AB}=W^{BA}=B$, which is a contradiction.
      \item Either $W^{AB}=A$ and $W^{BA}=B$ or $W^{AB}=B$ and $W^{BA}=A$. 
      
      Then
      a contradiction follows in both cases from the fact that $l(A)<l(B)$ and $l(B)<l(A)$ (by Fact \ref{level}).
      \item $W^{AB}\neq A$, $W^{AB}\neq B$, $W^{AB}\neq W^{BA}$.  
      
      Then $l(W^{AB})<l(A)$ and
      $l(W^{AB})< l(B)$. Additionally  we have $l(W^{BA})\leq l(A)$ and $l(W^{BA})\leq
      l(B)$ and thus $A$ and $B$ is not a minimal counter example.
      \item $W^{BA}\neq A$, $W^{BA}\neq B$, $W^{AB}\neq W^{BA}$.
      
      The contradiction follows symmetrically to the previous case from the minimality of $l(A)+l(B)$.
    \end{enumerate}
  \end{proof}
\begin{thm}\label{order}
  $(\He,\leq_\in)$ is a partially ordered set.
\end{thm}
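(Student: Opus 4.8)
The plan is to verify the three defining properties of a partial order for the relation $\leq_\in$, observing that the two substantial ingredients have already been isolated as Lemma \ref{ltr} (transitivity of $\leq_\in$) and Lemma \ref{lasy} (strong antisymmetry of $<_\in$). Thus the theorem is essentially an assembly step, and the only work is to unwind the definition $M\leq_\in N \iff (M<_\in N \text{ or } M=N)$ into the relevant cases.

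First I would dispatch reflexivity, which is immediate: for every $M\in\He$ the second disjunct $M=M$ gives $M\leq_\in M$. Next, for transitivity I would split on the two disjunctions. Suppose $M\leq_\in N$ and $N\leq_\in P$. If either relation is in fact an equality, then after substituting the equal element the conclusion $M\leq_\in P$ holds trivially. The single genuine case is $M<_\in N$ and $N<_\in P$, and here the desired $M<_\in P$ (hence $M\leq_\in P$) is exactly the statement of Lemma \ref{ltr}.

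Finally, for antisymmetry I would argue by contradiction against strictness. Assume $M\leq_\in N$ and $N\leq_\in M$. If $M=N$ there is nothing to prove; otherwise both relations must be strict, yielding simultaneously $M<_\in N$ and $N<_\in M$. This is precisely the configuration excluded by Lemma \ref{lasy}, so we reach a contradiction and conclude $M=N$.

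Since all of the delicate reasoning — the case analysis on witnesses in the transitivity argument and the minimal-counterexample induction on levels in the antisymmetry argument — has already been absorbed into Lemma \ref{ltr} and Lemma \ref{lasy}, no step of this final assembly presents any real obstacle. The only point demanding a little care is making the case split on the two disjunctions of $\leq_\in$ explicit, so that the strict lemmas are invoked only in the strict cases and the equality cases are handled by substitution.
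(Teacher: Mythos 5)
Your proof is correct and follows essentially the same route as the paper: the paper's own proof of Theorem \ref{order} is exactly this assembly step, citing the definition for reflexivity and Lemmas \ref{ltr} and \ref{lasy} for transitivity and antisymmetry. Your explicit case split on the disjunction in $\leq_\in$ is just a more careful write-up of what the paper leaves implicit.
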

  \begin{proof}
    Reflexivity of the relation follow directly from the definition, transitivity and antisymmetry follow from  Lemmas \ref{ltr} and \ref{lasy}.
  \end{proof}

Now we are ready to prove the main result of this section:

\begin{thm}
  \label{univ2}
  $(\He,\leq_\in)$ is the generic partially ordered set for the class of all countable partial orders.
\end{thm}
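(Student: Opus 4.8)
The plan is to derive genericity from the extension property, exactly as announced in the introduction: a countable partial order with the extension property is both universal (by an on-line embedding) and ultrahomogeneous (by a back-and-forth argument), hence generic. Since $\He$ is a subclass of the fixed countable model $\Model$ it is countable, so it remains to establish the extension property. Concretely, I would show: for every finite sub-poset $F\subseteq\He$ and every partition $F=L\cup G\cup U$ for which adjoining a new point $v$ above $L$, below $G$ and incomparable to $U$ yields a partial order (equivalently, $L$ is a down-set and $G$ an up-set of $F$ with $a<_\in b$ for all $a\in L$, $b\in G$), there is an element $v\in\He\setminus F$ realizing these relations. Such consistency of the demand is automatic both in the zig-zag step and in the one-point embedding, so it is all that is needed.

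The construction of $v$ mimics a Conway cut. I would set $v_L$ to be the closure of $L$ under the operation $A\mapsto A_L$ (repeatedly adjoining left sets) and $v_R$ the closure of $G$ under $A\mapsto A_R$, and then take $v$ to be the set $M$ encoding the pair $(v_L,v_R)$ in the sense of the definitions of $M_L,M_R$. Both closures are finite, since the level $l$ strictly decreases along $A_L$ and $A_R$; moreover every $x\in v_L$ satisfies $x\leq_\in a$ for some $a\in L$, and every $y\in v_R$ satisfies $b\leq_\in y$ for some $b\in G$. Checking $v\in\He$ then reduces to the four defining properties: left- and right-completeness hold by construction; for the ordering property any $x\in v_L$, $y\in v_R$ satisfy $x\leq_\in a<_\in b\leq_\in y$ and hence $x<_\in y$; and disjointness $v_L\cap v_R=\emptyset$ follows because a common element would give $b\leq_\in a$, contradicting $a<_\in b$ together with antisymmetry (Lemma~\ref{lasy}).

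With $v$ in hand the realized relations are read off through witnesses. For $a\in L$ we have $a\in v_L$, so $a$ itself witnesses $a<_\in v$ (Fact~\ref{lpmnoziny}); dually $b\in v_R$ gives $v<_\in b$ for $b\in G$. For incomparability with $c\in U$ I would unfold the definition of $<_\in$: assuming $v$ is fresh (so $v\notin\{c\}\cup c_L\cup c_R$), a witness of $c<_\in v$ would have to lie in $c_R\cap v_L$ or be $c\in v_L$, and a witness of $v<_\in c$ in $v_R\cap c_L$ or be $c\in v_R$; each possibility is excluded because it would force $c<_\in a$ for some $a\in L$ or $b<_\in c$ for some $b\in G$, contradicting that $L$ is a down-set and $G$ an up-set. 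Transitivity (Lemma~\ref{ltr}) and Facts~\ref{fact2} and~\ref{level} are precisely the tools that convert ``a witness exists'' into such a comparison.

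The main obstacle is exactly the freshness assumption used above: the minimal cut can collapse onto an element already present. For instance when $L=G=\emptyset$ the closures are empty and $v=\emptyset$, which need not be incomparable to a given $c\in U$ (it fails precisely when $\emptyset\in c_L$). To repair this I would enlarge the cut by adjoining a short auxiliary chain of high level, placed in $v_L$ and $v_R$ so as to lie below all of $v_R$ and above all of $v_L$ and to be incomparable to every $c\in U$, forcing $l(v)>\max_{z\in F}l(z)$ and hence $v\notin F$ and $v\notin\{c\}\cup c_L\cup c_R$. The delicate part is to insert these separators without destroying the ordering property (every left element must remain below every right element) and without creating any new witness relating $v$ to $U$; controlling this through the level function is the real work of the argument. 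Once the extension property holds for all consistent demands, the reduction of the first paragraph completes the proof.
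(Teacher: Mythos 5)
Your proposal follows the paper's route step for step up to the crucial final point: reduce genericity to the extension property, realize the new element as the cut whose left part is the left-closure $\overline{M_-}=\bigcup_{B\in M_-}B_L\cup M_-$ of the down-set and whose right part is the right-closure $\overline{M_+}$ of the up-set (your iterated closures coincide with these one-step closures by the left/right completeness properties of elements of $\He$), verify the four defining properties of $\He$, obtain the relations to $L$ and $G$ from witnesses, and then face the collision problem when the constructed cut coincides with, or sits inside, an element of $U$. All of that is correct and is exactly Lemma \ref{ext} of the paper.

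The gap is in the repair, which is precisely the step you defer as ``the real work of the argument,'' and the direction you sketch for it cannot work as stated: a separator of high level lying above all of $v_L$, below all of $v_R$, and \emph{incomparable to every} $c\in U$ is itself an element realizing the extension task you are trying to solve, so producing it is the original problem over again --- the repair is circular. The paper breaks this circle by using a separator that is \emph{comparable} to everything rather than incomparable to $U$: it sets $M'=\bigcup_{N\in M}N_R\cup M$ and $B=(\,\emptyset\,\Oddel\,M'\,)$, an element of $\He$ strictly below every element of $M$ (each $N\in M$ lies in $B_R$ and is its own witness), and adjoins it to the \emph{left} side only, forming $A'=(\,\overline{M_-}\cup\{B\}\,\Oddel\,\overline{M_+}\,)$. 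Since $l(B)>l(N)$ for every $N\in M$, the element $A'$ is automatically fresh ($A'\neq N$ and $A'\notin N_L\cup N_R$), and $B$ can never serve as a witness of a comparability between $A'$ and any $N\in M_0$, again by levels; incomparability with $M_0$ is then \emph{re-derived} from the witness analysis together with properties II and III --- it is never demanded of the separator itself. Adding $B$ does not disturb the ordering property, because every $y\in\overline{M_+}$ lies in $B_R$ and hence witnesses $B<_\in y$. Your proof becomes complete once you replace your ``incomparable separator'' by this (or an equally explicit) below-everything element; the rest of your argument then goes through as written.
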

First we show the following lemma:
\begin{lem}
  \label{ext}
  $(\He,\leq_\in)$ has the extension property.
\end{lem}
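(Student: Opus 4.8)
The plan is to realise the required point directly as a Conway-style cut: put the prescribed lower set into the left part and the prescribed upper set into the right part, and then read off all order relations from Fact~\ref{lpmnoziny} together with the witness analysis already used in Lemmas~\ref{ltr} and~\ref{lasy}. First I would record the unavoidable constraints. Since we seek $M$ with $\ell<_\in M$ for $\ell\in L$, with $M<_\in g$ for $g\in G$, and with $M$ incomparable to each $u\in U$, transitivity forces $\ell<_\in g$ for all $\ell\in L,\,g\in G$ and forbids both $u<_\in\ell$ and $g<_\in u$. As these conditions are necessary, I may assume them throughout (otherwise no extending vertex exists and there is nothing to prove).

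The construction itself is to let $M_L$ be the smallest set containing $L$ and closed under $A\mapsto A_L$, and $M_R$ the smallest set containing $G$ and closed under $B\mapsto B_R$, and then to set $M=M_L\cup\{\,A\cup\{\apple\}:A\in M_R\,\}$. Both closures are finite, since along $A\mapsto A_L$ and $B\mapsto B_R$ the level strictly drops and the recursion terminates. I would then verify $M\in\He$: left and right completeness hold by construction; correctness~(a) is immediate from the way $M$ is assembled, and~(b) holds because $L,G\subseteq\He$ and $\He$ is closed under passing to components. For the ordering property note that every $A\in M_L$ satisfies $A\leq_\in\ell$ for some $\ell\in L$ and every $B\in M_R$ satisfies $g\leq_\in B$ for some $g\in G$; with $\ell<_\in g$ this yields $A<_\in B$, which both furnishes the required witness and gives correctness~(c), as a common element of $M_L\cap M_R$ would force $g\leq_\in\ell$, contradicting $\ell<_\in g$ and antisymmetry (Lemma~\ref{lasy}). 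Finally $L\subseteq M_L$ and $G\subseteq M_R$ give $\ell<_\in M<_\in g$ by Fact~\ref{lpmnoziny}, the opposite inequalities being excluded automatically.

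To handle $U$ I would argue at the level of witnesses. For $u\in U$, a witness of $M<_\in u$ lies in $(\{M\}\cup M_R)\cap(\{u\}\cup u_L)$ and a witness of $u<_\in M$ in $(\{u\}\cup u_R)\cap(\{M\}\cup M_L)$. The ``order'' parts are killed by the assumed consistency: $u\in M_R$ or $u_L\cap M_R\neq\emptyset$ would, through $g\leq_\in B$ for the relevant $B\in M_R$, give $g<_\in u$, which is forbidden; symmetrically $u\in M_L$ or $u_R\cap M_L\neq\emptyset$ would give $u<_\in\ell$, also forbidden. Hence the only surviving possibilities are that $M$ itself is a witness, i.e.\ $M=u$, $M\in u_L$, or $M\in u_R$.

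This last point is where the real work lies, and it is the step I expect to be the main obstacle: nothing so far prevents the set $M$ from coinciding with some $u$ or with one of its components. I would eliminate this by a level argument, arranging $l(M)>l(u)$ for every $u\in U$; since by definition of level the members of $u$ have strictly smaller level, this forces $M\neq u$ and $M\notin u_L\cup u_R$, completing the incomparability. To raise $l(M)$ without spoiling anything already proved, I would adjoin to $M_L$ a single auxiliary element $F$ that is strictly below everything in $M_R\cup U$ and has level exceeding every $l(u)$. Such an $F$ exists: build $F_0$ with $F_{0,L}=\emptyset$ and $F_{0,R}$ the $(\cdot)_R$-closure of $M_R\cup U$, so that $F_0<_\in$ all of these, and then descend along a tower $F_{i+1,L}=\emptyset$, $F_{i+1,R}=\{F_i\}$, which keeps $F_i<_\in$ everything while raising the level by one at each step. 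A globally small element can never witness $M<_\in u$ or $u<_\in M$ (it lies in no $u_R$ and equals no $u$), so placing $F$ in $M_L$ preserves membership in $\He$ and all the relations above while forcing $l(M)$ high enough. Assembling these pieces produces the desired $M$ and hence the extension property.
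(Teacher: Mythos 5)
Your construction is essentially the paper's own proof. The paper likewise takes the downward closure $\overline{M_-}=\bigcup_{B\in M_-}B_L\cup M_-$ of the prescribed lower set and the upward closure of the upper set (by left/right completeness these one-step closures coincide with your recursive closures), verifies that the necessary compatibility conditions survive the closure, takes the resulting pair as the candidate element, and resolves the possible collision with the incomparable set $U$ in the same way you do: by adjoining to the left part a single auxiliary element lying below everything in sight. The only genuine divergence is at the very end: you exclude the surviving witness cases $M=u$, $M\in u_L$, $M\in u_R$ by a level count, whereas the paper argues that the augmented element differs from every member of $M_0$ and then reruns the witness analysis; both work, and your case analysis of witnesses is, if anything, cleaner than the paper's rather terse ending.

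However, one concrete step of your write-up fails: the tower $F_{i+1,L}=\emptyset$, $F_{i+1,R}=\{F_i\}$ does not stay inside $\He$. Right completeness (condition 4 of Definition \ref{Hdef}) requires $B_R\subseteq F_{i+1,R}$ for every $B\in F_{i+1,R}$; here $F_0\in F_{1,R}$ but $F_{0,R}$, the $R$-closure of $M_R\cup U$, is nonempty whenever $U\neq\emptyset$ and is certainly not contained in $\{F_0\}$, so $F_1\notin\He$. Moreover, with $F_{i+1,R}=\{F_i\}$ the witness set $(\{F_{i+1}\}\cup F_{i+1,R})\cap(\{X\}\cup X_L)$ can be empty for $X\in M_R\cup U$, so even your claim that the tower ``keeps $F_i<_\in$ everything'' breaks down. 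Fortunately the tower is superfluous: since $U\subseteq F_{0,R}$, the definition of level already gives $l(F_0)\geq l(u)+1>l(u)$ for every $u\in U$, so $F=F_0$ has all the properties you need; alternatively the tower can be repaired by closing the right sets, $F_{i+1,R}=\{F_i\}\cup F_{i,R}$. With that one-line correction (or simply deleting the tower) your argument is complete.
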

\begin{proof}
  Let $M$ be a finite subset of the elements of $\He$.  We want to extend the
  partially ordered set induced by $M$ by the new element
  $X$.  This extension can be described by three subsets of $M$: $M_-$ containing
  elements smaller than $X$, $M_+$ containing elements greater than $X$, and $M_0$
  containing elements incomparable with $X$.  Since the extended relation is
  a partial order we have the following properties of these sets:
  \begin{enumerate}
    \item[I.] Any element of $M_-$ is strictly smaller than any element of $M_+$,
    \item[II.] $B\leq_\in A$ for no $A\in M_-$, $B\in M_0$,
    \item[III.] $A\leq_\in B$ for no $A\in M_+$, $B\in M_0$,
    \item[IV.] $M_-$, $M_+$ and $M_0$ form a partition of $M$.
  \end{enumerate}

  Put
  $$\overline{M_-}=\bigcup_{B\in M_-}B_L\cup M_-,$$
  $$\overline{M_+}=\bigcup_{B\in M_+}B_R\cup M_+.$$
  We verify that the properties I., II., III., IV. still hold for sets $\overline{M_-}$, $\overline{M_+}$, $M_0$.

  \begin{enumerate}
    \item [ad I.]
    We prove that any element of $\overline{M_-}$ is strictly smaller than any element of $\overline{M_+}$:
    
    Let $A\in\overline {M_-}, A'\in\overline {M_+}$.  We prove $A<_\in A'$.
    By the definition of $\overline {M_-}$ there exists $B\in M_-$ such that either $A=B$ or $A\in B_L$.
    By the definition of $\overline {M_+}$ there exists $B'\in M_+$ such that either $A'=B'$ or $A'\in B'_R$.
    By the definition of $<_\in $ we have $A\leq_\in B$, $B<_\in B'$ (by I.) and $B'\leq_\in A'$ again by the definition of $<_\in $.  It follows $A<_\in A'$.
    \item [ad II.]
    We prove that $B\leq_\in A$ for no $A\in \overline{M_-}$, $B\in M_0$:

    Let $A\in\overline {M_-}, B\in M_0$ and let
    $A'\in M_-$ satisfy either $A=A'$ or $A\in A'_L$.
    We know that $B\nleq_\in A'$ and as $A\leq_\in A'$ we have also $B\nleq_\in A$.
    \item [ad III.] To prove that $A\leq_\in B$ for no $A\in \overline{M_+}$, $B\in M_0$ we can proceed similarly to ad II.
    \item [ad IV.]
    We prove that $\overline{M_-}$, $\overline{M_+}$ and $M_0$ are pairwise disjoint:
    
    $\overline {M_-}\cap \overline {M_+}=\emptyset$ follows from I.
    $\overline {M_-}\cap M_0=\emptyset$ follows from II.
    $\overline {M_+}\cap M_0=\emptyset$ follows from III.
  \end{enumerate}

  It follows that $A=\HE N,{\overline {M_-}},{\overline {M_+}}$ is an element
  of $\He$ with the desired inequalities for the elements in the sets $M_-$ and $M_+$.

  Obviously each element of $M_-$ is smaller than $A$ and each element of $M_+$ is greater than $A$.

  It remains to be shown that each $N\in M_0$ is incomparable with $A$. However
  we run into a problem here: it is possible that $A=N$.
  We can avoid this problem by first considering the set:
 $$M'=\bigcup_{B\in M}B_R\cup M.$$
 It is then easy to show that $B=\HE 0,\emptyset,{M'}$ is an element of $\He$
 strictly smaller than all elements of $M$.

 Finally we construct the set $A'=\HE 0,{A_L\cup\{B\}},{A_R}$.  The set $A'$ has the same
 properties with respect to the elements of the sets $M_-$ and $M_+$ and differs from any set in
 $M_0$.  It remains to be shown that $A'$ is incomparable with $N$.

 For contrary, assume for example, that $N<_\in A'$ and $W^{NA'}$ is the witness of the
 inequality.  Then $W^{NA'}\in  \overline {M_-}$ and $N\leq_\in W^{NA'}$.
 Recall that $N\in M_0$.
 From IV. above and the definition of $A'$ it follows that $N<_\in W^{NA'}$.
 From $ad$ III. above it follows that there is no
 choice of elements with $N<_\in W^{NA'}$, a contradiction.

 The case $N>_\in A'$ is analogous.
\end{proof}
\begin{proof}
  Proof of Theorem \ref{univ2} follows by combining Lemma \ref{ext} and fact that extension property imply both universality and ultrahomogeneity of the partial order.
\end{proof}
\begin{example}
Consider partial order $(P,\leq_P)$ depicted in Figure \ref{poset}. The function
$c$ embedding  $(P,\leq_P)$ to $(\He,\leq_\He)$ can be defined as:
  \begin{eqnarray}
c(1)&=&\TA\nonumber, \\
c(2)&=&\TB\nonumber, \\
c(3)&=&\TC\nonumber, \\
c(4)&=&\TD\nonumber.
  \end{eqnarray}
\end{example}
\subsection{Remark on Conway's surreal numbers}
Recall the definition of surreal numbers, see \cite{conway}. (For a
recent generalization see \cite{ehrlich}). Surreal numbers are defined
recursively together with their linear order.  We briefly indicate how
the partial order $(\He,\leq_\He)$ fits into this scheme.
\begin{defn}
  A surreal number is a pair $x=\{x^L|x^R\}$, where every member of the sets $x^L$ and
  $x^R$ is a surreal number and every member of $x^L$ is strictly smaller than
  every member of $x^R$.
  
  We say that a surreal number $x$ is less than or equal to the surreal number $y$ if and
  only if $y$ is not less than or equal to any member of $x^L$ and any member of $y^R$ is not
  less than or equal to $x$.
  
  We will denote the class of surreal numbers by $\Sur$.
\end{defn}

$\He$ may be thought of as a subset of $\Sur$ (we recursively add $\apple$ to express pairs $x^L$, $x^R$).  The recursive definition of
$A\in\He$ leads to the following order which we define explicitly:

\begin{defn}
  For elements $A,B\in \He$ we write $A\leq_\Sur B$, when there is no $l\in A_L$ such that $B\leq_\Sur l$
  and no $r\in B_R$ such that $r\leq_\Sur A$.
\end{defn}

$\leq_\Sur$ is a linear order of $\He$ and it is the restriction of Conway's order.
It is in fact a linear extension of the partial order $(\He,\leq_\in)$:

\begin{thm}
  For any $A,B\in \He$, $A<_\in B$ implies $A<_\Sur B$.
\end{thm}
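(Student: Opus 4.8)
The plan is to prove, by simultaneous induction on the level sum $l(A)+l(B)$, the following slightly stronger pair of statements for all $A,B\in\He$: (i) if $A\leq_\in B$ then $A\leq_\Sur B$, and (ii) if $A<_\in B$ then $B\not\leq_\Sur A$. Statement (ii) is what the theorem asks for, since combined with (i) it gives $A\leq_\Sur B$ together with $B\not\leq_\Sur A$, i.e. $A<_\Sur B$. Proving the two together is what makes the recursion close up, because each implication is verified by invoking the other one at a strictly smaller value of $l(A)+l(B)$; the engine of the decrease is the level function, which guarantees that every member of $A_L\cup A_R$ has level strictly below $l(A)$. The base case $A=B=\emptyset$ is vacuous for (ii) and holds for (i) because $\emptyset$ has no options.

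For (i) I would argue directly from the recursive definition of $\leq_\Sur$, by contradiction. Suppose $A\leq_\in B$ but $A\not\leq_\Sur B$; then either some $l\in A_L$ satisfies $B\leq_\Sur l$, or some $r\in B_R$ satisfies $r\leq_\Sur A$. In the first case Fact~\ref{lpmnoziny} gives $l<_\in A$, so $l<_\in B$ by transitivity (Lemma~\ref{ltr}); since $l(l)<l(A)$ the pair $(l,B)$ has smaller measure, and the inductive form of (ii) yields $B\not\leq_\Sur l$, a contradiction. The second case is symmetric, using $B<_\in r$ and the inductive (ii) to get $r\not\leq_\Sur A$. Applying this with $A=B$ also establishes reflexivity $A\leq_\Sur A$, which I will reuse below.

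For (ii), assume $A<_\in B$ (so $A\neq B$) and fix a witness $W\in(\{A\}\cup A_R)\cap(\{B\}\cup B_L)$. By the definition of $\leq_\Sur$ it suffices, in order to get $B\not\leq_\Sur A$, to exhibit either an $l\in B_L$ with $A\leq_\Sur l$ or an $r\in A_R$ with $r\leq_\Sur B$. I would split on the three possibilities for $W$: if $W=A$ then $A\in B_L$ (as $A\neq B$), and reflexivity gives $A\leq_\Sur A$, the desired left witness; if $W=B$ then $B\in A_R$, and reflexivity gives $B\leq_\Sur B$, the desired right witness; and if $W$ differs from both then $W\in A_R\cap B_L$, so $W<_\in B$ by Fact~\ref{lpmnoziny}, and since $l(W)<l(A)$ statement (i) at a smaller measure gives $W\leq_\Sur B$ with $W\in A_R$. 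In each case the required surreal witness is produced, so $B\not\leq_\Sur A$ and hence $A<_\Sur B$.

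The step I expect to require the most care is the bookkeeping of the inductive measure: the natural worry is that invoking reflexivity $A\leq_\Sur A$ (measure $2l(A)$) while proving (ii) for the pair $(A,B)$ (measure $l(A)+l(B)$) might fail to decrease the measure. This is precisely why the case analysis on $W$ must be organized so that reflexivity is only ever applied to an \emph{option}: in the case $W=A$ we have $A\in B_L$, whence $l(A)<l(B)$ and $2l(A)<l(A)+l(B)$, and symmetrically $l(B)<l(A)$ when $W=B$, while in the remaining case one only needs $l(W)<l(A)$. Verifying that the relevant strict level inequality holds in exactly the case where it is invoked is the crux of the argument; once this is confirmed, the two implications feed into one another cleanly and the induction terminates.
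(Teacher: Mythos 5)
Your proof is correct, and it takes a genuinely more self-contained route than the paper's. The paper's own argument shares your outer skeleton --- induction on $l(A)+l(B)$ and a trichotomy on the position of the witness --- but it is a three-line sketch that imports two facts from Conway's theory: transitivity of $<_\Sur$ (needed to chain $A<_\Sur W^{AB}<_\Sur B$ in the case $W^{AB}\neq A,B$, and available only because the paper asserts without proof that $\leq_\Sur$ is a linear order, being a restriction of Conway's order), and the claim that $A\in B_L$ forces $A<_\Sur B$ ``from the definition'' --- which is really the nontrivial surreal-number fact that left options lie strictly below, itself requiring an induction. Your simultaneous induction on the pair (i) $A\leq_\in B\Rightarrow A\leq_\Sur B$ and (ii) $A<_\in B\Rightarrow B\not\leq_\Sur A$ re-derives precisely these fragments inside the induction: reflexivity of $\leq_\Sur$ appears as the diagonal instance of (i), the option inequalities appear as the explicit witnesses you produce in (ii), and transitivity of $\leq_\Sur$ is never used at all (only transitivity of $\leq_\in$, Lemma~\ref{ltr}); the middle-case witness $W\in A_R\cap B_L$ is fed into the negation clause of the recursive definition of $\leq_\Sur$ rather than used as a middle term. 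Your explicit measure bookkeeping --- reflexivity is invoked only at an option, where the strict drop $l(A)<l(B)$, $l(B)<l(A)$, or $l(W)<l(A)$ is guaranteed --- is exactly what makes the mutual recursion well-founded, and you also treat the case $B\in A_R$, which the paper's sketch omits as tacitly symmetric. What the paper's route buys is brevity, at the price of resting on unproved properties of $\leq_\Sur$; what yours buys is a complete argument from the definition of $\leq_\Sur$ and the paper's stated facts alone. One shared caveat, not a gap relative to the paper: both arguments need $A<_\in B\Rightarrow A\neq B$, i.e.\ irreflexivity of $<_\in$, which is drawn from the antisymmetry lemma (Lemma~\ref{lasy}).
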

\begin{proof}
  We proceed by induction on $l(A)+l(B)$.

  For empty $A$ and $B$ the theorem holds as they are not comparable by $<_\in$.

  Let $A<_\in B$ with $W^{AB}$ as a witness.  If $W^{AB}\neq A,B$, then $A<_\Sur W^{AB} <_\Sur B$ by induction.  In the case $A \in B_L$, then $A<_\Sur B$ from the definition of $<_\Sur$.
\end{proof}

\section{Universality of Graph Homomorphisms}

Perhaps the most natural order between finite models is induced by homomorphisms.
The universality of the homomorphism order for the class of all finite
graphs was first shown by \cite{Pultr}.  

Numerous other classes followed (see e. g. \cite{Pultr}) but planar graphs
(and other topologically restricted classes) presented a problem.

The homomorphism order on the class of finite paths was studied in \cite{NZhu}.
It has been proved it is a dense partial order (with the exception of a few
gaps which were characterized; these gaps are formed by all core-path of height
$\leq 4$).  \cite{NZhu} also rises (seemingly too ambitious) question whether
it is a universal partial order. 
  This has
been resolved in \cite{HN-paths,HN-trees} by showing that finite oriented paths
with homomorphism order are universal. In this section we give a new proof of
this result.  The proof is simpler and yields a stronger result (see Theorem
\ref{klacky}).

Recall that
an {\em oriented path $P$} of length $n$ is any oriented graph $(V,E)$ where
$V=\{v_0,v_1,\ldots,v_n\}$ and for every $i=1,2,\ldots,n$ either
$(v_{i-1},v_i)\in E$ or $(v_i,v_{i-1})\in E$ (but not both), and there are no
other edges.  Thus an oriented path is any orientation of an undirected path.

Denote by $(\Paths,\leq_\Paths)$ the class of all finite paths ordered by homomorphism
order.
Given paths $P=(V,E)$, $P'=(V',E')$, a {\em homomorphism} is a mapping $\varphi:V\to V'$ which preserves edges:
$$(x,y)\in E\implies (\varphi(x),\varphi(y))\in E'.$$  For paths $P$ and $P'$ we write $P\leq_\Paths P'$ if and only if there is homomorphism $\varphi:P\to P'$

To show the universality of oriented paths, we will construct an embedding of
$(\Periodic,\subseteq)$ to $(\Paths,\leq_\Paths)$.  Recall that the class $\Periodic$ denotes the class of all periodic subsets of $\Z$ (see Section \ref{preiodickesection}).
This is a new feature, which gives
a new, more streamlined and shorter proof of the \cite{HN-paths}.
  The main difference of the proof in \cite{HN-paths,HN-trees} and the one presented here is the use of $(\Periodic,\subseteq)$ as the
base of the representation instead of $(\TV,\leq_\TV)$.  The linear nature of graph
homomorphisms among oriented paths make it very difficult to adapt many-to-one
mapping involved in $\leq_\TV$. The cyclic mappings of $(\Periodic,\subseteq)$ are easier
to use.

Let us introduce terms and notations that are useful when speaking of homomorphisms
between paths. (We follow standard notations as e.g. in \cite{HellNesetril, NZhu}.)

While oriented paths do not make a difference between initial and terminal
vertices, we will always consider paths in a specific order of vertices from
the initial to the terminal vertex.  We denote the initial vertex $v_0$ and the
terminal vertex $v_n$ of $P$ by $in(P)$ and $term(P)$ respectively.  For a path $P$ we will
denote by $\overleftarrow{P}$ the flipped path $P$ with order of vertices
$v_n,v_{n-1},\ldots,v_0$.  For paths $P$ and $P'$ we denote by $PP'$ the path
created by the concatenation of $P$ and $P'$ (i.e. the disjoint union of $P$ and $P'$
with $term(P)$ identified with $in(P')$).

The {\em length} of a path $P$ is the number of edges in $P$.  The {\em
algebraic length} of a path $P$ is the number of forwarding minus the
number of backwarding edges in $P$.  Thus the algebraic length of a
path could be negative.  The {\em level $l_P(v_i)$} of $v_i$ is the algebraic
length of the subpath $(p_0,p_1,\ldots,p_i)$ of $P$.  The {\em distance} between
vertices $p_i$ and $p_j$, $d_P(p_i,p_j)$, is given by $|j-i|$. The {\em
algebraic distance, $a_P(p_i,p_j)$}, is $l_P(v_j)-l_P(v_i)$.

Denote by $\varphi:P\to P'$ a homomorphism from path $P$ to $P'$.  Observe that
we always have $d_P(p_i,p_j)\leq d_{P'}(\varphi(p_i),\varphi(p_j))$ and
$a_P(p_i,p_j)=a_{P'}(\varphi(p_i),\varphi(p_j))$.  We will construct paths in
such a way that  every homomorphism $\varphi$ between path $P$ and $P'$ must
map the initial vertex of $P$ to the initial vertex of $P'$ and thus preserve levels of
vertices (see Lemma \ref{zacatel} bellow).

  \begin{figure}[t]
    \begin{center}
      \includegraphics[width=12cm]{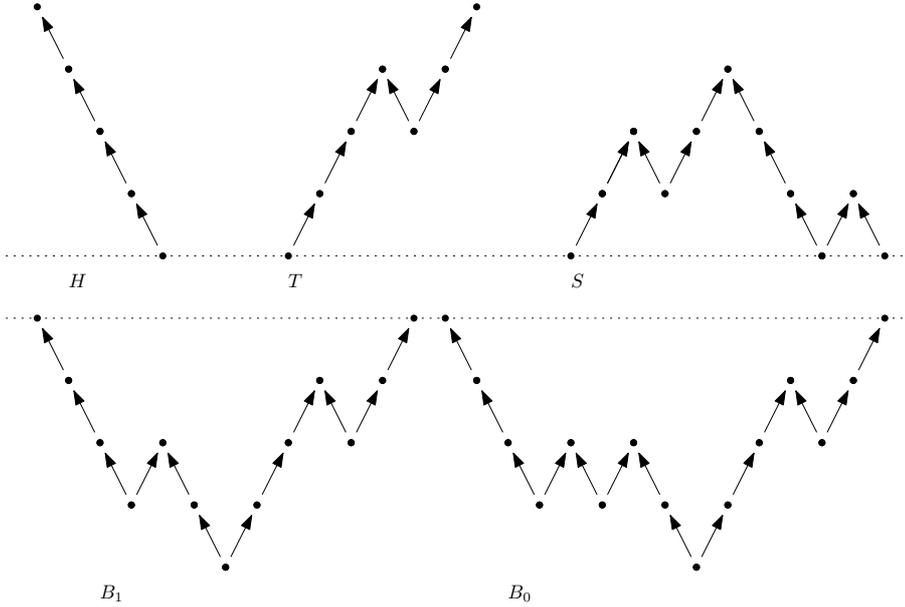}
    \end{center}
    \caption{Building blocks of $p(W)$.}
    \label{cesty}
  \end{figure}
The basic building blocks if our construction are the paths shown in Figure
\ref{cesty} ($H$ stands for {\em head}, $T$ for {\em tail}, $B$ for {\em body} and $S$ for {\em \v
sipka}---arrow in Czech language). Their initial vertices appear on the left,
terminal vertices on the right.  Except for $H$ and $T$ the paths are balanced
(i.e. their algebraic length is $0$). We will construct paths by concatenating
copies of these blocks. $H$ will always be the first path, $T$ always the last.  (The
dotted line in Figure \ref{cesty} and Figure \ref{p01110} determines vertices with level $-4$.)

\begin{defn}
Given a word $W$ on the alphabet $\{0,1\}$ of length $2^n$, we assign path $p(W)$ recursively as follows:

\begin{enumerate}
 \item $p(0)=B_0$.
 \item $p(1)=B_1$.
 \item $p(W)=p(W_1)S\overleftarrow {p(W_2)}$ where $W_1$ and $W_2$ are words of length $2^{n-1}$ such that $W=W_1W_2$. 
\end{enumerate}
Put $\overline{p}(W)=Hp(W)T$.
\end{defn}

\begin{example}
\label{exp1}
For a periodic set $S$, $s(4,S)=0110$, we construct $\overline{p}(s(4,S))$ in the following way:

$$p(0)=B_0,$$
$$p(1)=B_1,$$
$$p(01)=B_0S\overleftarrow{B_1},$$
$$p(10)=B_1S\overleftarrow{B_0},$$
$$p(0110)=B_0S\overleftarrow {B_1}SB_0\overleftarrow S\overleftarrow {B_1},$$
$$\overline{p}(0110)=HB_0S\overleftarrow {B_1}SB_0\overleftarrow S\overleftarrow {B_1}T.$$

  \begin{figure}[t]
    \begin{center}
      \includegraphics[width=13cm]{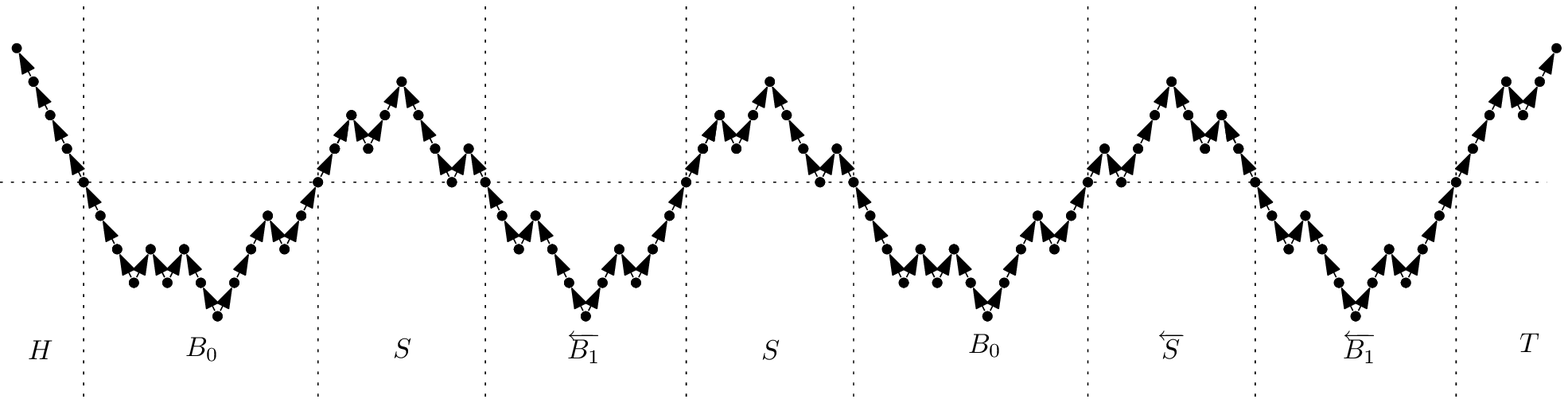}
    \end{center}
    \caption{$\overline{p}(0110)$.}
    \label{p01110}
  \end{figure}
See Figure \ref{p01110}.

\end{example}

The key result of our construction is given by the following:
\begin{prop}
\label{pscarkou}
Fix a periodic set $S$ of period $2^k$ and a periodic set $S'$ of period $2^{k'}$.
There is a homomorphism $$\varphi:\overline{p}(s(2^k,S))\to \overline{p}(s(2^{k'},S'))$$ if and only if $S\subseteq
S'$ and $k'\leq k$.

If a homomorphism $\varphi$ exists, then $\varphi$ maps the initial vertex of $\overline{p}(s(2^k,S))$ to the initial vertex of $\overline{p}(s(2^{k'},S'))$.
If $k'=k$ then $\varphi$ maps the terminal vertex of $\overline{p}(s(2^k,S))$ to the terminal vertex of $\overline{p}(s(2^{k'},S'))$. If $k'<k$
 then $\varphi$ maps the terminal vertex of $\overline{p}(s(2^k,S))$ to the initial vertex of $\overline{p}(s(2^{k'},S'))$.
\end{prop}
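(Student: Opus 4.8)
The plan is to exploit the rigidity forced by level preservation. Throughout write $W=s(2^k,S)$, $W'=s(2^{k'},S')$, $P=\overline{p}(W)$ and $P'=\overline{p}(W')$. The first step is to establish that every homomorphism $\varphi:P\to P'$ sends $in(P)$ to $in(P')$ and preserves levels, $l_{P'}(\varphi(v))=l_P(v)$ for all $v$. This rests on the head $H$: it is the unique maximal monotone run issued from the initial vertex, so the identity $a_P(p_i,p_j)=a_{P'}(\varphi(p_i),\varphi(p_j))$ together with $d_P(p_i,p_j)\le d_{P'}(\varphi(p_i),\varphi(p_j))$ leaves no room for its image except the head of $P'$; this is exactly the content isolated in Lemma~\ref{zacatel}. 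Once levels are pinned down, $\varphi$ must carry the body $p(W)$ into the body $p(W')$ beginning at the latter's initial vertex.

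For the \emph{only if} direction I would analyse how $\varphi$ traverses the body. From the block definitions one extracts that the distinguished low level $-4$ (the dotted line) is reached inside a body block exactly when that block is a $B_1$, while $B_0$, $S$ and the connecting structure stay strictly above it. Since levels are preserved, every vertex of $P$ at level $-4$ must be sent to a vertex of $P'$ at level $-4$, i.e. to the bottom of some $B_1$-block. Tracking the arrows $S$ and $\overleftarrow{S}$, which force the traversal either to continue forward or to turn around, I would show by induction on the recursion $p(W)=p(W_1)S\overleftarrow{p(W_2)}$ that $\varphi$ maps the $j$-th body block of $P$ onto a uniquely determined body block of $P'$, with the index reduced modulo $2^{k'}$ and the orientation alternating. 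This block-to-block rigidity yields $S\subseteq S'$ at once (a $B_1$ of $P$ can only land on a $B_1$ of $P'$), and reading off where the terminal tail $T$ of $P$ must go forces $2^{k'}\mid 2^{k}$, that is $k'\le k$.

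For the \emph{if} direction, assuming $S\subseteq S'$ and $k'\le k$, I would construct $\varphi$ explicitly as the folding map suggested by this analysis. Since $S'$ is $2^{k'}$-periodic it is also $2^{k}$-periodic, and its signature unfolds to $W'$ repeated $2^{k-k'}$ times; the inclusion $S\subseteq S'$ says precisely that $W$ lies coordinatewise below this unfolding. I would set $H\mapsto H$, send each body block $B_0$ of $P$ to the block in the congruent position of $P'$ (using $B_0\to B_0$ or $B_0\to B_1$ as permitted) and each $B_1$ to the corresponding $B_1$, letting the map run back and forth along $P'$ as dictated by the flips $\overleftarrow{\cdot}$, and finally $T\mapsto T$. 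Checking that this preserves edges is a routine block-by-block verification. Counting folds gives the endpoint statement: the body is traversed $2^{k-k'}$ times, a single pass when $k'=k$ (so $term(P)\mapsto term(P')$) and an even number of passes when $k'<k$ (so the traversal returns and $term(P)\mapsto in(P')$).

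The main obstacle I expect is the rigidity argument of the second paragraph: proving that level preservation alone forces the block-by-block, orientation-alternating behaviour, and in particular that no homomorphism exists when $k'>k$. The subtlety is to rule out ``shortcut'' walks in $P'$ that respect both levels and algebraic distances yet do not follow the intended folding; this is where the exact geometry of $S$, $\overleftarrow{S}$ and the barrier level $-4$ must be used, and where the inductive bookkeeping over $p(W)=p(W_1)S\overleftarrow{p(W_2)}$ carries the real weight.
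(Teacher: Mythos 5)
Your plan follows essentially the same route as the paper's own proof: pinning the initial vertex and levels via the monotone head (the paper's Lemma~\ref{zacatel}), forcing block-to-block rigidity by induction on the recursion $p(W)=p(W_1)S\overleftarrow{p(W_2)}$ and the level separation of the blocks (the paper's Lemmas~\ref{phomo} and~\ref{pscarkou2}, plus the absence of a homomorphism $B_1\to B_0$), and building the positive direction by unfolding $W'$ to length $2^k$ and composing with the folding homomorphism (the paper's Lemma~\ref{folding}), with the same fold-parity bookkeeping for the endpoint claims. The one difficulty you flag --- ruling out ``shortcut'' walks so that every homomorphism follows the folding scheme --- is exactly the point the paper itself treats only briefly (``we omit the details''), so your proposal is faithful to the published argument in both its structure and its level of rigor.
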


Prior to the proof of Proposition $\ref{pscarkou}$ we start with observations about homomorphisms between our special paths.
\begin{lem}
\label{zacatel}
Any homomorphism $\varphi:\overline{p}(W)\to \overline{p}(W')$ must map the initial vertex of $\overline{p}(W)$ to the initial vertex of $\overline{p}(W')$.
\end{lem}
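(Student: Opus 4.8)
The plan is to build on the two invariants of oriented-path homomorphisms recorded just before the statement: for every homomorphism $\varphi:P\to P'$ one has $d_P(p_i,p_j)\le d_{P'}(\varphi(p_i),\varphi(p_j))$ and, more importantly here, the exact identity $a_P(p_i,p_j)=a_{P'}(\varphi(p_i),\varphi(p_j))$. Applying the latter with $p_i=in(\overline{p}(W))$ and recalling that the initial vertex has level $0$, one sees that $\varphi$ shifts every level by a single constant: putting $c=l_{\overline{p}(W')}(\varphi(in(\overline{p}(W))))$ we obtain $l_{\overline{p}(W')}(\varphi(v))=l_{\overline{p}(W)}(v)+c$ for all vertices $v$. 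Thus the whole problem reduces to fixing $c$ and then locating the level-$0$ vertex $\varphi(in(\overline{p}(W)))$.

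First I would read off the level profile of the blocks. Each path $\overline{p}(W)=Hp(W)T$ begins with the descent $H$ from level $0$ down to level $-4$, runs through the balanced body $p(W)$, and ends with the ascent $T$ from $-4$ back to $0$; since all blocks are concatenated at a fixed junction level and only $H,T$ are unbalanced, every such path has all its levels in $[-4,0]$, with the minimum $-4$ attained (on the dotted line) inside the body and the maximum $0$ attained \emph{only} at the two endpoints. Both facts are routine consequences of the fixed shapes of $H,T,B,S$ and hold for $\overline{p}(W)$ and $\overline{p}(W')$ alike. The inclusion of level ranges $[-4+c,\,0+c]\subseteq[-4,0]$ then forces $0\le c\le 0$, so $c=0$: the homomorphism preserves levels exactly, and $\varphi(in(\overline{p}(W)))$ is a level-$0$ vertex of $\overline{p}(W')$, hence either $in(\overline{p}(W'))$ or $term(\overline{p}(W'))$.

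It remains to exclude the terminal vertex, and this is the one delicate point. Suppose for contradiction $\varphi(in(\overline{p}(W)))=term(\overline{p}(W'))$. Starting at $in(\overline{p}(W))$, the path runs monotonically downwards along $H$ to level $-4$; because an endpoint of $\overline{p}(W')$ has degree one and levels are preserved, the images of these vertices are forced, step by step, to run backwards along the tail $T$ of $\overline{p}(W')$, so that the end of $H$ is sent to the start of $T$ at level $-4$. From there the body $p(W)$ continues forwards, whereas in $\overline{p}(W')$ the only available continuation is the last body block read in reverse. Since the head and tail attach to \emph{opposite} ends of the body and the arrow block $S$ is directed---homomorphically passable in one orientation only---the first arrow met on this forced route would have to be traversed against its orientation, which is impossible. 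This contradiction gives $\varphi(in(\overline{p}(W)))=in(\overline{p}(W'))$, as desired.

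The main obstacle is precisely this last step. The level and algebraic-distance bookkeeping of the first two paragraphs is mechanical once the block profile is known, but it cannot by itself separate the two endpoints: both sit at the bottom of a length-$4$ directed descent at level $0$, so any purely metric argument treats them symmetrically. I would therefore isolate two explicit sublemmas about the gadgets---``$S$ admits a homomorphic image in only one orientation'' and ``$H$ and $T$ meet $p(W)$ at opposite ends''---since the entire distinction between $in$ and $term$ rests on combining them with the forced backwards traversal above.
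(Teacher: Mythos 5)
Your opening reduction is fine: algebraic-distance preservation does give $l_{\overline{p}(W')}(\varphi(v))=l_{\overline{p}(W)}(v)+c$ for a single constant $c$, and the fact that level $0$ occurs only at the two endpoints of $\overline{p}(W')$ is indeed true (the paper uses it elsewhere). But the structural facts you feed into this skeleton are invented, and they are wrong. The blocks $H,T,B_0,B_1,S$ are defined only by a figure, and the paper's own proof of this lemma records that $\overline{p}(W)$ \emph{starts with a monotone path of $7$ edges} and \emph{has vertices at level $-8$}; so the levels of $\overline{p}(W)$ do not lie in $[-4,0]$, the head does not merely descend to $-4$ (the dotted line at $-4$ is the junction level of the blocks, not the minimum), and balanced blocks do dip below their junction level. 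Consequently your range inclusion $[-4+c,0+c]\subseteq[-4,0]$, and hence $c=0$, does not follow as stated; to repair it you would need to verify from the figure that source and target attain the \emph{same} minimum level, which is exactly the kind of information your proposal does not have.

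The more serious gap is the step you yourself flag as delicate: excluding $\varphi(in(\overline{p}(W)))=term(\overline{p}(W'))$. Your argument for it is a narrative about a forced walk running backwards through $T$, into the body, and getting stuck at an $S$ traversed against its orientation, resting on two unproven sublemmas about gadget shapes. This route is not only unproved, it is inconsistent with the actual construction: the linchpin of the paper's proof is that the initial segment is the \emph{unique} monotone $7$-edge subpath of $\overline{p}(W')$, so a $7$-edge monotone image starting at the terminal vertex is impossible outright --- the walk never gets past $T$, and no $S$-block is ever reached. The paper's argument is: the monotone $7$-edge start of $\overline{p}(W)$ must map onto the unique monotone $7$-edge subpath of $\overline{p}(W')$ (its first $8$ vertices), and the flipped alignment is then killed by the level computation (vertices of $\overline{p}(W)$ at level $-8$ would need images at level $+1$, which does not exist). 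Your proposal never states or uses this uniqueness fact, and without it neither the endpoint dichotomy nor the terminal-vertex exclusion can be completed.
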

\begin{proof}
$\overline{p}(W)$ starts with the monotone path of 7 edges. The homomorphism $\varphi$ must map this
path to a monotone path in $\overline{p}(W')$. The only such
subpath of $\overline{p}(W')$ is formed by first 8 vertices of $\overline{p}(W')$.

It is easy to see that $\varphi$ cannot flip the path:
If $\varphi$ maps the initial vertex of $\overline{p}(W)$ to the 8th vertex of $\overline{p}(W')$ then $\overline{p}(W)$
has vertices at level $-8$ and because homomorphisms must preserve algebraic
distances, they must map to the vertex of level $1$ in $\overline{p}(W')$ and there
is no such vertex in $\overline{p}(W')$.
\end{proof}
\begin{lem}
\label{phomo}
Fix words $W,W'$ of the same length $2^k$.  Let $\varphi$ be a homomorphism $\varphi:p(W)\to p(W')$.
 Then $\varphi$ maps the initial vertex of $p(W)$ to the initial vertex of $p(W')$
 if and only if $\varphi$ maps the terminal vertex of $p(W)$ to the terminal vertex of $p(W')$.
\end{lem}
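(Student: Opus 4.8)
The plan is to reduce the statement to a single structural fact: any homomorphism between two oriented paths of equal length must carry the unordered pair of endpoints onto the pair of endpoints. Once this is established, the claimed equivalence is immediate, with no appeal to the finer level structure of the blocks.

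First I would record that $p(W)$ and $p(W')$ have the same number of edges. This follows by induction on $k$ from the recursive definition of $p$: the base blocks $B_0$ and $B_1$ have the same length, and the step $p(W)=p(W_1)S\overleftarrow{p(W_2)}$ contributes a fixed length $|S|$ together with two subpaths built from words of length $2^{k-1}$, whose edge-lengths agree by the induction hypothesis. Hence $|p(W)|$ depends only on $|W|=2^k$. Write $N$ for this common length, so both paths carry vertices indexed $0,1,\ldots,N$ with the initial vertex at index $0$ and the terminal vertex at index $N$.

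Next I would invoke the observation recorded just before Lemma~\ref{zacatel} that a homomorphism never decreases distance, that is $d_{p(W)}(x,y)\le d_{p(W')}(\varphi(x),\varphi(y))$. In a path on $N+1$ vertices the distance between any two vertices is at most $N$, with equality attained \emph{only} by the pair consisting of the two endpoints. Since $d_{p(W)}(in(p(W)),term(p(W)))=N$, the images $\varphi(in(p(W)))$ and $\varphi(term(p(W)))$ lie at distance at least $N$, hence exactly $N$, in $p(W')$; therefore $\{\varphi(in(p(W))),\varphi(term(p(W)))\}=\{in(p(W')),term(p(W'))\}$, and the two images are distinct since their distance is positive.

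This yields a clean dichotomy: either $\varphi$ sends $in(p(W))\mapsto in(p(W'))$ and $term(p(W))\mapsto term(p(W'))$, or it sends $in(p(W))\mapsto term(p(W'))$ and $term(p(W))\mapsto in(p(W'))$. In the first case both sides of the asserted equivalence hold; in the second case both fail, since the initial vertex is not sent to the initial vertex and the terminal vertex is not sent to the terminal vertex. In either case ``initial maps to initial'' holds precisely when ``terminal maps to terminal'' holds, which is exactly the statement. The only real content is the endpoint-mapping step, and it is a direct consequence of distance monotonicity together with the equal-length bookkeeping; the main care needed is simply to verify the equal-length claim, which the recursion makes routine.
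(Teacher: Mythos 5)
Your argument rests on a step that is false: the claim that a homomorphism between paths never decreases distance. You are quoting the observation printed just before Lemma~\ref{zacatel}, but that inequality is misstated in the paper --- note that, as printed, it contradicts the paper's own folding Lemma~\ref{folding}, where a long path is mapped onto a much shorter one with both of its endpoints going to a single vertex. The correct fact is the reverse: the $\varphi$-image of the subpath of $p(W)$ joining $x$ to $y$ is a walk of the same length in $p(W')$, so $d_{p(W')}(\varphi(x),\varphi(y))\le d_{p(W)}(x,y)$; what homomorphisms genuinely preserve is the \emph{algebraic} distance (equivalently, levels), and that is the only part of that observation the paper actually uses. With the inequality in its true direction, your deduction yields only that the images of the two endpoints lie at distance at most $N$, i.e.\ nothing. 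Worse, the ``single structural fact'' you reduce everything to is not merely unproven but false: homomorphisms between oriented paths of equal length need not carry the endpoint pair onto the endpoint pair. Take $P=P'$ to be the oriented path $v_0\rightarrow v_1\leftarrow v_2$ and put $\varphi(v_0)=v_0$, $\varphi(v_1)=v_1$, $\varphi(v_2)=v_0$; this is a homomorphism between equal-length paths that maps the initial vertex to the initial vertex while mapping the terminal vertex to the initial vertex, so both your dichotomy and the ``if and only if'' of the lemma fail for general paths of equal length.

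This shows the lemma is not generic bookkeeping about lengths: it depends on the internal structure of the paths $p(W)$, and any correct proof must use it. The paper argues by induction on the recursion $p(W)=p(W_0)S\overleftarrow{p(W_1)}$: in the base case one checks the homomorphisms among the blocks directly (there is no homomorphism $B_1\to B_0$, and the only homomorphism $B_0\to B_0$ is the identity); in the induction step, preservation of levels together with the fact that the copies of $S$ occupy levels different from those of the blocks $B_0,B_1$ forces $\varphi$ to send $p(W_0)$ into $p(W'_0)$, the middle copy of $S$ onto the middle copy of $S$ without flipping it, and $\overleftarrow{p(W_1)}$ into $\overleftarrow{p(W'_1)}$, after which the induction hypothesis finishes the argument. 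That block-by-block rigidity is exactly what your approach discards, and without it the statement you are trying to prove is not even true.
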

\begin{proof}

We proceed by induction on length of $W$:

For $W=i$ and $W'=j$, $i,j\in\{0,1\}$ we have $p(W)=B_i$ and $p(W')=B_j$.
There is no homomorphism $B_1\to B_0$. The unique
homomorphism $B_0\to B_{1}$ has the desired properties. The only homomorphism $B_0\to B_0$ is the isomorphism $B_0\to B_0$.

In the induction step put $W=W_0W_1$ and $W'=W'_0W'_1$ where $W_0$, $W_1$, $W'_0$, $W'_1$ are words of length $2^{k-1}$. We have $p(W)=p(W_0)S\overleftarrow {p(W_1)}$ and $p(W')=p(W'_0)S\overleftarrow {p(W'_1)}$. 

First assume that $\varphi$ maps $in(p(W))$ to $in(p(W'))$.
Then $\varphi$ clearly maps $p(W_0)$ to $p(W'_0)$ and thus by the induction
hypothesis $\varphi$ maps $term(p(W_0))$ to $term(p(W'_0))$. Because the vertices of
$S$ are at different levels than the vertices of the final blocks $B_0$ or $B_1$ of
$p(W_0')$, a copy of $S$ that follows in $p(W)$ after $p(W_0)$ must map to a copy 
of $S$ that follows in $p(W')$ after $p(W'_0)$.
Further $\varphi$ cannot flip $S$ and thus $\varphi$ maps $term(S)$ to $term(S)$.
By same argument $\varphi$ maps $p(W_1)$ to $p(W'_1)$. The initial vertex of $p(W_1)$ is the terminal vertex of $p(W)$ and it must map
to the initial vertex of $p(W'_1)$ and thus also the terminal vertex of $p(W')$.

The second possibility is that $\varphi$ maps  $term(p(W))$ to $term(p(W'))$. This can be handled similarly (starting from the terminal vertex of paths
in the reverse order).
\end{proof}
\begin{lem}
\label{pscarkou2}
Fix periodic sets $S,S'$ of the same period $2^k$.
There is a homomorphism $$\varphi:p(s(2^k,S))\to p(s(2^k,S'))$$ mapping $in(p(s(2^k,S)))$ to $in(p(s(2^k,S')))$ if and only if $S\subseteq S'$. 
\end{lem}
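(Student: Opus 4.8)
The plan is to prove Lemma \ref{pscarkou2} by induction on the length $2^k$ of the signatures, using Lemma \ref{phomo} as the engine that lets the induction pass through the recursive construction $p(W)=p(W_0)S\overleftarrow{p(W_1)}$. The statement is a biconditional, so I will treat the two directions separately, but both are driven by the recursive splitting of the word into its left and right halves, matched against the recursive splitting of the set $S$ (resp. $S'$) into the integers congruent to the low-order bit.

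First I would settle the base case $k=0$, where $s(1,S)$ is a single letter. Here $p(s(1,S))$ is simply $B_0$ or $B_1$, and the claim that a level-preserving homomorphism exists iff $S\subseteq S'$ reduces to the three elementary facts already recorded in the proof of Lemma \ref{phomo}: there is no homomorphism $B_1\to B_0$, there is a homomorphism $B_0\to B_1$, and the only homomorphism $B_0\to B_0$ is the identity. Reading $0$ as ``$n\notin S$'' and $1$ as ``$n\in S$'', these say precisely that a level-preserving homomorphism on the single coordinate exists iff membership is monotone, i.e. iff $S\subseteq S'$ on that residue class.

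For the induction step I would write $W=s(2^k,S)=W_0W_1$ and $W'=s(2^k,S')=W'_0W'_1$, where $W_0,W'_0$ encode the even-indexed residues and $W_1,W'_1$ the odd-indexed ones (half-period $2^{k-1}$ signatures of the two sub-sets $S\cap 2\Z$ and $S\cap(2\Z+1)$, suitably shifted). Then $p(W)=p(W_0)S\overleftarrow{p(W_1)}$ and likewise for $W'$. For the forward direction, a homomorphism $\varphi$ sending $in(p(W))$ to $in(p(W'))$ must, by the level/algebraic-distance bookkeeping isolated in Lemma \ref{phomo}, carry the block $p(W_0)$ onto $p(W'_0)$, then carry the separating copy of $S$ (\v sipka) onto the matching copy without flipping, and finally carry $\overleftarrow{p(W_1)}$ onto $\overleftarrow{p(W'_1)}$; by the induction hypothesis each half-homomorphism forces the corresponding inclusion $S\cap 2\Z\subseteq S'\cap 2\Z$ and $S\cap(2\Z+1)\subseteq S'\cap(2\Z+1)$, which together give $S\subseteq S'$. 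Conversely, given $S\subseteq S'$, the induction hypothesis supplies level-preserving homomorphisms on each half, and Lemma \ref{phomo} guarantees they agree on the shared terminal/initial vertices of the halves so that, after inserting the identity on the intermediate $S$ block, they glue into a single homomorphism $p(W)\to p(W')$ preserving the initial vertex.

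\textbf{The main obstacle} I expect is the gluing argument in the induction step: verifying that the homomorphism cannot ``cheat'' by sending the separator $S$ to some other part of $p(W')$, or by flipping one of the sub-blocks, so that the recursion genuinely decouples into the two half-length problems. This is exactly where the algebraic-length rigidity of the building blocks (the fact, used in Lemmas \ref{zacatel} and \ref{phomo}, that the vertices of $S$ sit at levels distinct from those of the final $B_0,B_1$ blocks, and that homomorphisms preserve algebraic distance) must be invoked carefully; once the decoupling is established, the inductive bookkeeping on the inclusions is routine.
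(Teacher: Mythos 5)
Your proof is correct and takes essentially the same approach as the paper: both directions rest on Lemma \ref{phomo}, on the level-rigidity that forces copies of $S$ to map to copies of $S$ and copies of $B_0,B_1$ to copies of $B_0,B_1$, and on the non-existence of a homomorphism $B_1\to B_0$; the paper just phrases this as a single blockwise correspondence (the recursion you spell out is already packaged inside Lemma \ref{phomo}), and its constructive direction is the same digit-by-digit concatenation of $B_0\to B_1$ with identity maps that your inductive gluing produces. One harmless slip: the halves $W_0,W_1$ of $s(2^k,S)$ record membership of the residues $1,\ldots,2^{k-1}$ and $2^{k-1}+1,\ldots,2^k$ respectively, not the even/odd residues as you write; since digit-wise inclusion decomposes over the halves either way, nothing in your argument is affected.
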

\begin{proof}
If $S\subseteq S'$ then the Lemma follows from the construction of $p(s(2^k,S))$. Every digit 1 of $s(2^k,S)$ has a corresponding copy of $B_1$ in $p(s(2^k,S))$ and every
digit 0 has a corresponding copy of $B_0$ in $p(s(2^k,S))$.  It is easy to build a homomorphism $\varphi$ by concatenating a homomorphism $B_0\to B_1$
and identical maps of $S$, $B_0$ and $B_1$.

In the opposite direction, assume that there is a homomorphism $\varphi$ from $p(s(2^k,S))$ to $p(s(2^k,S'))$. By the assumption and Lemma \ref{phomo}, $\varphi$ must be map
$term(p(s(2^k,S)))$ to $term(p(s(2^k,S')))$. Because $S$ use vertices at different
levels than $B_0$ and $B_1$, all copies of $S$ must be mapped to copies of $S$. Similarly copies of $B_0$ and $B_1$ must be mapped to copies of $B_0$ or $B_1$. If $S\not \subseteq S'$ then there is position $i$ such that $i$-th letter of $s(2^k,S)$ is 1 and $i$-th letter of $s(2^k,S')$ is 0. It follows that the copy of $B_1$ corresponding to this letter would have to map to a copy of $B_0$. This contradicts with the fact that there is no homomorphism $B_1\to B_0$.
\end{proof}

\begin{lem}[folding]
\label{folding}
For a word $W$ of length $2^k$, there is a homomorphism $$\varphi:\overline{p}(WW)\to \overline{p}(W)$$ mapping $in(\overline{p}(WW))$ to $in(\overline{p}(W))$
and $term(\overline{p}(WW))$ to $in(\overline{p}(W))$.
\end{lem}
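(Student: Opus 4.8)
The plan is to build the homomorphism $\varphi$ explicitly, reading off the recursive description of both paths. Since the word $WW$ of length $2^{k+1}$ splits as $W\cdot W$, rule $3.$ of the construction gives $p(WW)=p(W)\,S\,\overleftarrow{p(W)}$, so that $\overline{p}(WW)=H\,p(W)\,S\,\overleftarrow{p(W)}\,T$, while the target is simply $\overline{p}(W)=H\,p(W)\,T$. I would define $\varphi$ block by block along this decomposition and then verify edge-preservation at the junctions between the blocks.

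First I would map the prefix $H\,p(W)$ of $\overline{p}(WW)$ by the identity onto the prefix $H\,p(W)$ of $\overline{p}(W)$; this already forces $in(\overline{p}(WW))\mapsto in(\overline{p}(W))$, as required, and sends $term(p(W))$ to $term(p(W))$. Next I would map the reversed copy $\overleftarrow{p(W)}$ onto the same copy of $p(W)$ in the target via the inclusion of $p(W)$ into $\overline{p}(W)$. The point here is that $\overleftarrow{p(W)}$ is literally the same oriented graph as $p(W)$ (only the distinguished endpoints are swapped), so this vertex map coincides, vertex for vertex, with the inclusion already used for the forward copy and is therefore automatically edge-preserving; it sends the initial vertex of $\overleftarrow{p(W)}$, which is $term(p(W))$, to $term(p(W))$, and its terminal vertex, which is $in(p(W))$, to $in(p(W))$.

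This leaves only the two connecting blocks, and these are the heart of the argument. The central arrow $S$ joins the end of the forward copy to the start of the reversed copy, and under the assignment above both of its endpoints are pinned to $term(p(W))$; hence $\varphi$ must fold $S$ onto a closed walk based at $term(p(W))$. The final tail $T$ joins the end of $\overleftarrow{p(W)}$, whose image is $in(p(W))$, to the terminal vertex of $\overline{p}(WW)$, which must be sent to $in(\overline{p}(W))$; since the segment of $\overline{p}(W)$ running from $in(p(W))$ back to $in(\overline{p}(W))$ is exactly the reversed head $\overleftarrow{H}$, the map $\varphi$ must fold $T$ onto $\overleftarrow{H}$. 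Both foldings are possible by the explicit design of the building blocks in Figure \ref{cesty}: $S$ is balanced and shaped so that it admits a homomorphism onto a back-and-forth walk returning to its starting vertex, and $T$ admits a homomorphism onto the reversed head $\overleftarrow{H}$ (the tail being built for exactly this purpose). Checking that the edges survive at the junctions flanking $S$ and $T$ uses that the last body block $B_0$ or $B_1$ of $p(W)$ sits at the levels prescribed in Figure \ref{cesty}, exactly the level bookkeeping already exploited in Lemma \ref{phomo} and Lemma \ref{zacatel}.

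I expect the main obstacle to be precisely these two foldings: everything else is an inclusion or a reversal of the same oriented graph and is immediate, whereas collapsing $S$ onto a closed walk at $term(p(W))$ and $T$ onto $\overleftarrow{H}$ requires that the target actually contain vertices at the right levels around $term(p(W))$ and along the head. This is guaranteed by the concrete shapes of the blocks, so the verification is a finite check against Figure \ref{cesty} rather than a conceptual difficulty; the endpoint conditions $in\mapsto in$ and $term\mapsto in$ then both hold by construction.
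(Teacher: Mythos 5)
Your proof is correct and follows essentially the same route as the paper's: decompose $\overline{p}(WW)=H\,p(W)\,S\,\overleftarrow{p(W)}\,T$, map $H\,p(W)$ identically, send $\overleftarrow{p(W)}$ back onto the same copy of $p(W)$ (the flip being the same underlying graph), and fold the connecting blocks. The only differences are cosmetic: the paper pins down the closed walk for $S$ as going into the tail $T$ with $term(S)\mapsto in(T)$ (which your level bookkeeping forces anyway, since the levels of $S$ avoid those of $B_0,B_1$), while it leaves the fold of the final $T$ onto $\overleftarrow{H}$ implicit, a step you spell out explicitly.
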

\begin{proof}
By definition $$\overline{p}(WW)=Hp(W)S\overleftarrow{p(W)}T$$ and $$\overline{p}(W)=Hp(W)T.$$ 
The homomorphism $\varphi$
maps the first copy of $p(W)$ in $\overline{p}(WW)$ to a copy of $p(W)$ in $\overline{p}(W)$, a copy of $S$
is mapped to $T$ such that the terminal vertex of $S$ maps to the initial vertex of $T$
and thus it is possible to map a copy of $\overleftarrow{p(W)}$ in $\overline{p}(WW)$ to the
same copy of $p(W)$ in $\overline{p}(W)$.
\end{proof}

We will use the folding Lemma iteratively. By composition of homomorphisms there is
also homomorphism $p(WWWW)\to p(WW)\to p(W)$. (From the path constructed from $2^k$ copies of $W$ to
$p(W)$.)
\begin{proof}[Proof (of Proposition \ref{pscarkou})]
Assume the existence of a homomorphism $\varphi$ as in Proposition \ref{pscarkou}.
First observe that $k'\leq k$ (if $k<k'$ then there is a copy of $T$ in $\overline{p}(s(2^k,S))$ would have to map
into the middle of $\overline{p}(s(2^{k'},S'))$, but there are no vertices at the level 0 in
$\overline{p}(s(2^{k'},S'))$ except for the initial and terminal vertex).

For $k=k'$ the statement follows directly from Lemma \ref{pscarkou2}.

For $k'<k$ denote by $W''$ the word that consist of $2^{k-k'}$ concatenations of $W'$. Consider a homomorphism $\varphi'$ from $p(W)$ to $p(W'')$ 
mapping $in(p(W)$ to $in(p(W''))$. $W$ and $W''$
have the same length and such a homomorphism exists by Lemma \ref{pscarkou2} if
and only if $S\subseteq S'$. Applying Lemma \ref{folding} there is a homomorphism $\varphi'': p(W'')\to p(W')$.
A homomorphism $\varphi$ can be obtained by composing $\varphi'$ and $\varphi''$.
It is easy to see that any homomorphism $\overline{p}(W)\to \overline{p}(W')$ must follow the same scheme of
``folding'' the longer path $\overline{p}(W)$ into $\overline{p}(W')$ and thus there is a homomorphism
$\varphi$ if and only if $S\subseteq S'$. We omit the details.
\end{proof}

For a periodic set $S$ denote by $S^{(i)}$ the inclusion maximal
periodic subset of $S$ with period $i$. (For example for $s(4,S)=0111$ we have $s(2,S^{(2)})=01$.)
\begin{defn}
For $S\in \Periodic$ let $i$ be the minimal integer such that $S$ has period $2^i$.
Let $\Embed{\Periodic}{\Paths}(S)$  be the concatenation of the paths $$H,$$
$$\overline{p}(s(1,S^{(1)}))\overleftarrow{\overline{p}(s(1,S^{(1)}))},$$
$$\overline{p}(s(2,S^{(2)}))\overleftarrow{\overline{p}(s(2,S^{(2)}))},$$
$$\overline{p}(s(4,S^{(4)}))\overleftarrow{\overline{p}(s(4,S^{(4)}))},$$
\centerline{\ldots,}
$$\overline{p}(s(2^{i-1},S^{(2^{i-1})}))\overleftarrow{\overline{p}(s(2^{i-1},S^{(2^{i-1})}))},$$
$$\overline{p}(s(2^i,S))\overleftarrow {\overline{p}(s(2^i,S))}.$$
\end{defn}

\begin{thm}
$\Embed{\Periodic}{\Paths}(v)$ is an embedding of $(\Periodic,\subseteq)$ to $(\Paths,\leq_\Paths)$.
\end{thm}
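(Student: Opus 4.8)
The plan is to establish the equivalence
$$S\subseteq S'\quad\Longleftrightarrow\quad \Embed{\Periodic}{\Paths}(S)\leq_\Paths\Embed{\Periodic}{\Paths}(S');$$
since $\leq_\Paths$ is antisymmetric on cores, this yields at once both injectivity and the embedding property. Write $2^i,2^{i'}$ for the minimal periods of $S,S'$, so that $\Embed{\Periodic}{\Paths}(S)$ is the leading $H$ followed by the doubled blocks $\overline{p}(s(2^j,S^{(2^j)}))\overleftarrow{\overline{p}(s(2^j,S^{(2^j)}))}$, $j=0,\dots,i$ (and analogously for $S'$). The single structural fact I rely on is that each $\overline{p}(\cdot)$ is balanced---its head and tail cancel the only unbalanced contributions, while $p(\cdot)$ is built from balanced blocks---so its initial and terminal vertices lie on one common \emph{base level} and every interior vertex lies strictly below it. Consequently, in either path the base-level vertices are exactly the block boundaries, the doubling $\overline{p}(W)\overleftarrow{\overline{p}(W)}$ starts and ends on the base level, and the leading $H$ runs from the overall initial vertex monotonically down to that base level.

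For $S\subseteq S'\Rightarrow$ a homomorphism, I build $\varphi$ one scale at a time, using $S^{(2^j)}\subseteq S'^{(2^j)}$ and $S^{(2^j)}\subseteq S'$ (both immediate from $S\subseteq S'$). For each scale $j<i'$ I map the doubled source block of scale $j$ onto the doubled target block of the same scale by Lemma \ref{pscarkou2} (initial to initial, terminal to terminal), thereby \emph{advancing} through one further target block. For each scale $j\geq i'$ (which can occur only when $i\geq i'$) I send the doubled source block of scale $j$ ``there and back'' into the first copy of the target's top block of scale $i'$ via Lemma \ref{folding} and Proposition \ref{pscarkou}, so that its terminal returns to the initial vertex of that top block. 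The advancing blocks are processed in increasing order of $j$ and the remaining blocks all sit pinned at the initial vertex of the top target block; because consecutive source blocks share their base-level boundary vertex, the individual maps agree there and assemble into a single homomorphism, with the leading $H$ mapping identically onto the leading $H$ of the target. Here the doubling is essential: it is what lets each scale either advance (traversing both copies into both copies of the matching target block) or fold back to its starting base-level vertex.

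For the converse, let $\varphi:\Embed{\Periodic}{\Paths}(S)\to\Embed{\Periodic}{\Paths}(S')$ be a homomorphism. The leading $H$ provides a long monotone initial descent, so exactly as in Lemma \ref{zacatel} the map $\varphi$ sends the overall initial vertex to the overall initial vertex (it cannot flip, as that would require a vertex one level above the initial one), and hence preserves levels. Now look at the first copy $\overline{p}(s(2^i,S))$ of the source's top block: its two endpoints lie on the base level and its interior strictly below. Since $\varphi$ preserves levels and the below-base part of the target is the disjoint union of the block interiors separated by the base-level boundaries, the connected interior image lies inside a single target block, and the two endpoints map to the two boundary vertices of that block. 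Thus $\varphi$ restricts to a homomorphism $\overline{p}(s(2^i,S))\to\overline{p}(s(2^{k'},S'^{(2^{k'})}))$ into one target block of some scale $k'\leq i$, and Proposition \ref{pscarkou} forces $S\subseteq S'^{(2^{k'})}\subseteq S'$, as required; injectivity then follows since $S\not\subseteq S'$ rules out any homomorphism $\Embed{\Periodic}{\Paths}(S)\to\Embed{\Periodic}{\Paths}(S')$.

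The main obstacle is this rigidity step, and everything hinges on the level bookkeeping of the building blocks: that the boundaries of the $\overline{p}(\cdot)$-blocks occupy precisely the base level while their interiors drop strictly below it. This is exactly what confines the source's top block to a single target block in the converse (so that Proposition \ref{pscarkou} can be applied) and what guarantees that the doubled blocks return to the base level so the scale-by-scale maps glue in the forward construction. The case analysis in the forward direction (matching the lower scales versus pinning the source's excess top scales into the target's top block) is the other fiddly point, but once the base-level structure is pinned down, the remainder reduces to invoking Proposition \ref{pscarkou}, Lemma \ref{pscarkou2} and Lemma \ref{folding} scale by scale.
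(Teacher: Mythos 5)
Your proposal is correct and takes essentially the same route as the paper's own proof: the forward direction by concatenating scale-by-scale homomorphisms (advancing through the matching lower scales and folding the excess top scales back into the target's top block via Proposition \ref{pscarkou} and Lemma \ref{folding}), and the converse via the Lemma \ref{zacatel} rigidity argument, level preservation, and confinement of a source block to a single target block before invoking Proposition \ref{pscarkou}. The only cosmetic differences are that you make the base-level bookkeeping explicit and use only the top source block in the converse, where the paper states the confinement for all scales and then specializes to the top one.
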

\begin{proof}
Fix $S$ and $S'$ in $\Periodic$ of periods $2^i$ and $2^{i'}$ respectively.

Assume that $S\subseteq S', i>i'$.  Then the homomorphism $\varphi:\Embed{\Periodic}{\Paths}(S)\to\Embed{\Periodic}{\Paths}(S')$ can be constructed
via the concatenation of homomorphisms:
$$H\to H,$$
$$\overline{p}(s(1,S^{(1)}))\overleftarrow{\overline{p}(s(1,S^{(1)}))}\to \overline{p}(s(1,S'^{(1)}))\overleftarrow{\overline{p}(s(1,S'^{(1)}))},$$
$$\overline{p}(s(1,S^{(2)}))\overleftarrow{\overline{p}(s(1,S^{(2)}))}\to \overline{p}(s(2,S'^{(2)}))\overleftarrow{\overline{p}(s(2,S'^{(2)}))},$$
\centerline{\ldots,}
$$\overline{p}(s(2^{i'-1},S^{(2^{i'-1})}))\overleftarrow{\overline{p}(s(2^{i'-1},S^{(2^{i'-1})}))}\to \overline{p}(s(2^{i'-1},S'^{(2^{i'-1})}))\overleftarrow{\overline{p}(s(2^{i'-1},S'^{(2^{i'-1})}))},$$
$$\overline{p}(s(2^{i'},S^{(2^{i'})}))\overleftarrow{\overline{p}(s(2^{i'},S^{(2^{i'})}))}\to \overline{p}(s(2^{i'},S')),$$
$$\overline{p}(s(2^{i'+1},S^{(2^{i'+1})}))\overleftarrow{\overline{p}(s(2^{i'+1},S^{(2^{i'+1})}))}\to \overline{p}(s(2^{i'},S')),$$
\centerline{\ldots,}
$$\overline{p}(s(2^{i},S))\overleftarrow{\overline{p}(s(2^{i'},S))}\to \overline{p}(s(2^{i'},S')).$$
Individual homomorphisms exists by Proposition \ref{pscarkou}.  For $i\leq i'$ the
construction is even easier.

In the opposite direction assume that there is a homomorphism
$\varphi:\Embed{\Periodic}{\Paths}(S)\to\Embed{\Periodic}{\Paths}(S')$.
$\Embed{\Periodic}{\Paths}(S)$ starts by two concatenations of $H$ and thus 
a long monotone path and using a same argument as in Lemma \ref{zacatel}, $\varphi$
must map the initial vertex of $\Embed{\Periodic}{\Paths}(S)$ to the initial vertex of
$\Embed{\Periodic}{\Paths}(S')$.  It follows that $\varphi$ preserves levels of vertices.
It follows that for every $k=1,2,4,\ldots, 2^i$, $\varphi$ must map $\overline{p}(s(k,S^{(k)})$ to $\overline{p}(s(k',S'^{(k')}))$  for some $k'\leq k, k'=1,2,4,\ldots, 2^{i'}$.  By application of
Proposition \ref{pscarkou} it follows that $S^{(k)}\subseteq S'^{(k')}$. In particular $S\subseteq S'^{(k')}$. This holds only if $S\subseteq S'$.
\end{proof}
\begin{thm}[\cite{HN-paths}]
\label{pathuniv}
The quasi order $(\Paths,\leq_\Paths)$ contains universal partial order.
\end{thm}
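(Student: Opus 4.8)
The plan is to derive this statement by assembling the two results that immediately precede it, so the work is one of bookkeeping rather than new construction. First I would invoke the earlier theorem that $(\Periodic,\subseteq)$ is universal, i.e.\ every countable partial order occurs as an induced suborder of $(\Periodic,\subseteq)$. Then I would invoke the preceding embedding theorem, which establishes that $\Embed{\Periodic}{\Paths}$ is an embedding of $(\Periodic,\subseteq)$ into $(\Paths,\leq_\Paths)$: for $S,S'\in\Periodic$ one has $S\subseteq S'$ if and only if $\Embed{\Periodic}{\Paths}(S)\leq_\Paths\Embed{\Periodic}{\Paths}(S')$. Composing these two facts, every countable partial order embeds as an induced suborder into $(\Paths,\leq_\Paths)$.

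The single point requiring care is that $(\Paths,\leq_\Paths)$ is a \emph{quasi-order} rather than a partial order: two homomorphically equivalent paths $P\neq P'$ can satisfy both $P\leq_\Paths P'$ and $P'\leq_\Paths P$. Accordingly the theorem asserts only that the quasi-order \emph{contains} a universal partial order, and to certify this I would exhibit that partial order explicitly as the image $\{\Embed{\Periodic}{\Paths}(S):S\in\Periodic\}$, equipped with the restriction of $\leq_\Paths$. To see this restriction is genuinely a partial order I would check antisymmetry on the image: if $\Embed{\Periodic}{\Paths}(S)\leq_\Paths\Embed{\Periodic}{\Paths}(S')$ and $\Embed{\Periodic}{\Paths}(S')\leq_\Paths\Embed{\Periodic}{\Paths}(S)$, then the embedding property forces $S\subseteq S'$ and $S'\subseteq S$, whence $S=S'$ by antisymmetry of $\subseteq$, so the two paths coincide. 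Since $\Embed{\Periodic}{\Paths}$ is both order-preserving and order-reflecting, this image is isomorphic (as a partial order) to $(\Periodic,\subseteq)$, and as the latter is universal so is the image, which is exactly the claim.

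I do not anticipate a real obstacle, as all the combinatorial substance—the path gadgets $\overline{p}(W)$, the control of their homomorphisms through Lemmas~\ref{zacatel}--\ref{folding}, and the assembly of $\Embed{\Periodic}{\Paths}$ in Proposition~\ref{pscarkou} and the embedding theorem—has already been discharged. The only subtlety worth stating carefully is the quasi-order versus partial-order distinction above, so that the phrase ``contains a universal partial order'' is explicitly justified rather than tacitly assumed.
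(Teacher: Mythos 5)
Your proposal is correct and is exactly the paper's intended argument: the theorem is stated as an immediate consequence of the universality of $(\Periodic,\subseteq)$ composed with the embedding $\Embed{\Periodic}{\Paths}$ established just before it. Your explicit verification of antisymmetry on the image (justifying the word ``contains'' for a quasi-order) is a point the paper leaves tacit, but it is the same route.
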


In fact our new proof of Corollary \ref{pathuniv} gives the following
strengthening for rooted homomorphisms of paths. A {\em plank} $(P,r)$ is an
oriented path rooted at the initial vertex $r=in(P)$.
Given planks $(P,r)$ and $(P',r')$, a homomorphism $\varphi:(P,r)\to (P',r')$ is a homomorphism
$P\to P'$ such that $\varphi(r)=r'$.

\begin{thm}
\label{klacky}
The quasi order formed by all planks ordered by the existence of homomorphisms contains a universal partial order.
\end{thm}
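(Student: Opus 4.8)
The plan is to observe that the entire machinery built for Theorem~\ref{pathuniv} already produces rooted homomorphisms, so Theorem~\ref{klacky} follows almost immediately by re-reading the existing proof in the rooted setting. The key point is that every path $\overline{p}(W)$ and every composite path $\Embed{\Periodic}{\Paths}(S)$ has a distinguished initial vertex $in(\cdot)$, and all the homomorphisms constructed in Proposition~\ref{pscarkou} and in the embedding theorem were shown to map initial vertex to initial vertex. Thus I would root each path $\Embed{\Periodic}{\Paths}(S)$ at its initial vertex, turning it into a plank, and claim that $\Embed{\Periodic}{\Paths}$ is then an embedding of $(\Periodic,\subseteq)$ into the quasi-order of planks.

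First I would verify the easy direction: if $S\subseteq S'$, the homomorphism $\varphi:\Embed{\Periodic}{\Paths}(S)\to\Embed{\Periodic}{\Paths}(S')$ constructed in the proof of the embedding theorem already maps the initial vertex to the initial vertex (this was stated explicitly there), so $\varphi$ is a homomorphism of planks. Hence $S\subseteq S'$ implies $(\Embed{\Periodic}{\Paths}(S),in)\to(\Embed{\Periodic}{\Paths}(S'),in)$ as planks. Second, for the converse, I would note that a rooted homomorphism is in particular an unrooted homomorphism, so if a plank homomorphism exists then by the (unrooted) embedding theorem we already get $S\subseteq S'$. This direction is therefore automatic and strictly weaker than what was proved unrooted. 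Combining the two directions, $\Embed{\Periodic}{\Paths}$ is an embedding of $(\Periodic,\subseteq)$ into the plank quasi-order, and since $(\Periodic,\subseteq)$ is universal (Section~\ref{preiodickesection}), the plank quasi-order contains a universal partial order.

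The main subtlety, and the only place requiring care, is Lemma~\ref{zacatel}: the reason the unrooted homomorphism theorem is already essentially a rooted statement is precisely that every homomorphism between these paths is \emph{forced} to preserve the initial vertex. In the unrooted setting this was a nontrivial lemma establishing that the long monotone head $H$ can only be mapped one way; in the rooted setting this forcing is simply built into the definition of a plank homomorphism. So the hard work was already done in proving Lemma~\ref{zacatel} and Proposition~\ref{pscarkou}, and the strengthening to planks is the observation that these lemmas never actually used the freedom of an unrooted homomorphism---they always concluded that initial vertices are preserved. I would write the proof as a short corollary-style argument that makes this explicit rather than reproving anything.

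\begin{proof}
Root each path $\Embed{\Periodic}{\Paths}(S)$ at its initial vertex, obtaining a plank. We claim $\Embed{\Periodic}{\Paths}$ is an embedding of $(\Periodic,\subseteq)$ into the quasi-order of planks. If $S\subseteq S'$, the homomorphism $\varphi:\Embed{\Periodic}{\Paths}(S)\to\Embed{\Periodic}{\Paths}(S')$ constructed in the proof of the embedding theorem maps the initial vertex of $\Embed{\Periodic}{\Paths}(S)$ to the initial vertex of $\Embed{\Periodic}{\Paths}(S')$, hence is a homomorphism of planks. Conversely, any plank homomorphism is in particular an (unrooted) homomorphism $\Embed{\Periodic}{\Paths}(S)\to\Embed{\Periodic}{\Paths}(S')$, so by the embedding theorem $S\subseteq S'$. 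Thus the plank quasi-order contains $(\Periodic,\subseteq)$, which is universal by the results of Section~\ref{preiodickesection}.
\end{proof}
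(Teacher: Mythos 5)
Your proof is correct and takes essentially the same approach as the paper: the paper justifies Theorem~\ref{klacky} precisely by remarking that its proof of Theorem~\ref{pathuniv} already yields the rooted strengthening, since the homomorphisms constructed for $S\subseteq S'$ preserve initial vertices while any homomorphism between the constructed paths is forced (Lemma~\ref{zacatel}) to do so, making the converse direction automatic. Your corollary-style write-up simply makes this observation explicit.
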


\section{Related results}
The universality of oriented paths implies the universality of the homomorphism order of many
naturally defined classes of structures (such as undirected planar or series-parallel graphs) ordered by homomorphism via the indicator
construction (see \cite{HN-trees}, \cite{NJared}).  By similar techniques the universality
of homomorphism the order on labelled partial orders is shown in \cite{lehtonen1}.

Lehtonen  and Ne\v set\v ril \cite{lehtonen2} consider also the partial order defined on boolean
functions in the following way.  Each clone $\mathcal C$ on a fixed base set $A$
determines a quasiorder on the set of all operations on $A$ by the following
rule: $f$ is a $\mathcal C$-minor of $g$ if $f$ can be obtained by substituting
operations from $\mathcal C$ for the variables of $g$.  Using embedding homomorphism order
on hypergraphs, it can be shown that a clone $\mathcal C$ on $\{0,1\}$ has the property
that the corresponding $\mathcal C$ minor partial order is universal if and only if $\mathcal C$
is one of the countably many clones of clique functions or the clone of
self-dual monotone functions (using the classification of Post classes).

It seems that in most cases the homomorphism order of classes of relational
structures is either universal or fails to be universal for very simple reasons
(such as the absence of infinite chains or anti-chains).
\cite{NJared} look for minimal minor closed classes of
graphs that are dense and universal.  They show that $(\Paths,\leq_\Paths)$ is
a unique minimal class of oriented graphs which is both universal and dense.
Moreover, they show a dichotomy result for any minor closed class $\K$ of
directed trees. $\K$ is either universal or it is well-quasi-ordered.  Situation
seems more difficult for the case of undirected graphs, where such minimal
classes are not known and only partial result on series-parallel graphs was
obtained.

\section{Acknowledgement}
We thank Andrew Goodall and Robert \v S\'amal for help and several remarks which improved the quality of this paper.
We thank to the anonymous referee for an excellent job.

\end{document}